\newtheorem{thm}{Theorem}[section]
\newtheorem{conj}[thm]{Conjecture}
\newtheorem{remark}[thm]{Remark}
\newtheorem{exa}[thm]{Example}
\newcommand{\dbrac}[1]{{\llbracket #1 \rrbracket}} 	
\newcommand{\boks}[2]{({#1, #2})}   
\newcommand{\pattern}[4]{										
	\raisebox{0.6ex}{
		\begin{tikzpicture}[scale=0.35, baseline=(current bounding box.center), #1]
		\foreach \x/\y in {#4}		\fill[gray!20] (\x,\y) rectangle +(1,1);
		\draw (0.01,0.01) grid (#2+0.99,#2+0.99);
		\foreach \x/\y in {#3}		\filldraw (\x,\y) circle (6pt);
		\end{tikzpicture}}
}
\definecolor{red}{rgb}{1,0,0}
{}
\begin{document}

\begin{center}
{\large \bf  Distributions of mesh patterns of short lengths}
\end{center}

\begin{center}
Sergey Kitaev$^{a}$ and  Philip B. Zhang$^{b}$
\\[6pt]

$^{a}$Department of Computer and Information Sciences \\
University of Strathclyde, 26 Richmond Street, Glasgow G1 1XH, UK\\[6pt]

$^{b}$College of Mathematical Science \\
Tianjin Normal University, Tianjin  300387, P. R. China\\[6pt]

Email:  $^{a}${\tt sergey.kitaev@cis.strath.ac.uk},
           $^{b}${\tt zhang@tjnu.edu.cn}
\end{center}

\noindent\textbf{Abstract.}
A systematic study of {\em avoidance} of mesh patterns of length 2 was conducted by Hilmarsson \emph{et al.}, where 25 out of 65 non-equivalent cases were solved. In this paper, we  give 27 {\em distribution} results  for these patterns including 14 distributions for which avoidance was not known. Moreover, for the unsolved cases, we prove an equidistribution result (out of 6 equidistribution results we prove in total), and  conjecture 6 more equidistributions. Finally, we find seemingly unknown distribution of the well known permutation statistic ``strict fixed point'', which plays a key role in many of our enumerative results.

This paper is the first systematic study of distributions of mesh patterns. Our techniques to obtain the results include, but are not limited to, obtaining functional relations for generating functions, and finding recurrence relations and bijections. 
\\

\noindent {\bf Keywords:}  mesh pattern, distribution, avoidance, bijection, strong fixed point, unsigned Stirling number of the first kind, small descent \\

\noindent {\bf AMS Subject Classifications:} 05A05, 05A15.\\


\section{Introduction}\label{intro}
Patterns in permutations and words have attracted much attention in the literature (see~\cite{Kit} and references therein), and this area of research continues to grow rapidly. 
The notion of a {\em mesh pattern}, generalizing several classes of patterns, was introduced by Br\"and\'en and Claesson \cite{BrCl} to provide explicit expansions for certain permutation statistics as, possibly infinite, linear combinations of (classical) permutation patterns. 
A pair $(\tau,R)$, where $\tau$ is a permutation of length $k$ and $R$ is a subset of $\dbrac{0,k} \times \dbrac{0,k}$, where
$\dbrac{0,k}$ denotes the interval of the integers from $0$ to $k$, is a
\emph{mesh pattern} of length $k$.
Let $\boks{i}{j}$ denote the box whose corners have coordinates $(i,j), (i,j+1),
(i+1,j+1)$, and $(i+1,j)$. Let the horizontal lines represent the values,  and the vertical lines denote the positions in the pattern. Mesh patterns can be drawn by shading the boxes in $R$. The following picture represents
\[
\pattern{scale=1}{3}{1/2,2/3,3/1}{1/2, 2/1}
\]
 the mesh pattern with $\tau=231$ and $R = \{\boks{1}{2},\boks{2}{1}\}$.
Many papers were dedicated to the study of mesh patterns and their generalizations; e.g.\ see \cite{AKV,Borie,JKR,KL,KR1,KRT,T1,T2}. However, the first systematic study of mesh patterns was not done until \cite{Hilmarsson2015Wilf}, where 25 out of 65 non-equivalent {\em avoidance} cases of patterns of length 2 were solved. 
That is, in the 25 cases, the number of permutations avoiding the respective mesh patterns was found.

%

In this paper, we initiate a systematic study of distributions of mesh patterns by giving 27 distribution results  for the patterns considered in \cite{Hilmarsson2015Wilf}, including 14 distributions for which avoidance was not known. Moreover, for the unsolved cases, we prove an equidistribution result (out of 6 equidistribution results we prove in total), and  conjecture 6 more equidistributions (see Table~\ref{tab-2}). 
Techniques we use include generating functions, recurrence relations, and bijections.
 
We note that from the distribution point of view, we  cannot consider just the 65 patterns presented in \cite{Hilmarsson2015Wilf}, since there are more patterns to consider. For example, the pattern Nr.\ 39 =  $\pattern{scale=0.6}{2}{1/1,2/2}{0/1,1/0}$ 
 was considered there, while its Wilf-equivalent pattern (by the {\em Shading Lemma} in \cite{Hilmarsson2015Wilf}) 
 $\pattern{scale=0.6}{2}{1/1,2/2}{0/1,1/0,1/1}$ 
 was not considered. However, these two patterns have different distributions. Two patterns, $p_1$ and $p_2$, are said to be {\em Wilf-equivalent} if for any $n\geq 0$, the number of permutations of length $n$ avoiding $p_1$ is equal to that avoiding $p_2$. 
  
 Table~\ref{tab-1} overviews our enumerative results (27 patterns). In particular,  Nr.\ 3 = $\pattern{scale = 0.6}{2}{1/1,2/2}{0/0,0/1,1/2}$ is a conjectured distribution, Nr.\ 1 = $\pattern{scale = 0.6}{2}{1/1,2/2}{}$ is the well known distribution of non-inversions (same as the distribution of inversions) \cite[p. 21]{Stanley} given over permutations of length $n$ by
 \begin{equation}
 \label{inv-distr}
 (1+q)(1+q+q^2)\cdots(1+q+q^2+\cdots +q^{n-1}),
 \end{equation}
  and Nr.\ 14 = $\pattern{scale = 0.6}{2}{1/1,2/2}{0/1,1/1,1/2,1/0,1/2,2/1}$ is the known distribution of small descents \cite[p. 179]{Charalambides}. 
 
  \begin{table}[!ht]
 	{
 		\renewcommand{\arraystretch}{1.3}
 \begin{center} 
 		\begin{tabular}{|c|c|c||c|c|c|}
 			\hline
 			{Nr.\ } & {Repr.\ $p$}  & {Distribution}  &  {Nr.\ } & {Repr.\ $p$}  & {Distribution}  
 			\\[5pt]
 			\hline		\hline
 			1 & $\pattern{scale = 0.6}{2}{1/1,2/2}{}$ & \multirow{1}{2.5cm}{\tiny{Non-inversions given by \eqref{inv-distr}; \cite[p. 21]{Stanley}}
 		    }
 			&
 			20 & $\pattern{scale = 0.6}{2}{1/1,2/2}{0/0,0/1,0/2,1/1,1/2,2/0,2/1}$ &  Theorem~\ref{triv-thm-20}
 			\\[5pt]
 			\hline
 			3 & $\pattern{scale = 0.6}{2}{1/1,2/2}{0/0,0/1,1/2}$ & 
			Conjecture~\ref{conj-nr-3}
 			& 
 			21 & $\pattern{scale = 0.6}{2}{1/1,2/2}{0/1,1/2,0/0,2/0,2/2}$ & Theorem~\ref{triv-thm-21}
 			\\[5pt]
 			\hline
 			5 & $\pattern{scale = 0.6}{2}{1/1,2/2}{0/0,0/1,0/2}$ & Theorem~\ref{triv-thm-5}
 			&
 			22 & $\pattern{scale = 0.6}{2}{1/1,2/2}{0/1,1/2,0/0,2/0,2/2,1/1}$ & Theorem~\ref{triv-thm-22}
 			\\[5pt] 
 			\hline
 		    8 & $\pattern{scale = 0.6}{2}{1/1,2/2}{0/0,0/1,1/0,1/1}$ &  \multirow{3}{2.5cm}{ Theorem~\ref{dis-patterns-8-9} \tiny{Unsigned Stirling numbers of the first kind, \cite[A132393]{OEIS}}
 		    } 
 		    & 
 			27 & $\pattern{scale=0.6}{2}{1/1,2/2}{0/1,2/0,2/2,1/0,1/1,0/2}$    & Theorem~\ref{thm-pat-27}
 		    \\[5pt] 
 		    \cline{4-6}
 			9 & $\pattern{scale = 0.6}{2}{1/1,2/2}{0/1,1/1,1/2,2/1}$ &
 			&
			28 & $\pattern{scale=0.6}{2}{1/1,2/2}{0/1,1/2,0/0,2/2,1/0,2/1}$    &  Theorem~\ref{thm-pat-28}
 			\\[5pt]
 			\hline
 			10 & $\pattern{scale = 0.6}{2}{1/1,2/2}{0/0,0/1,0/2,2/0,2/1,2/2}$ & Theorem~\ref{triv-thm-10}
 			&
 			30 & $\pattern{scale=0.6}{2}{1/1,2/2}{0/1,1/2,2/0,1/0,1/1,2/1,0/2}$ & Theorem~\ref{thm-pat-30}
 			\\[5pt]
 			\hline
 			11 & $\pattern{scale = 0.6}{2}{1/1,2/2}{0/0,0/1,0/2,1/0,1/1,1/2,2/0,2/1,2/2 }$ & Theorem~\ref{triv-thm-11}
 			&
 			33 & $\pattern{scale=0.6}{2}{1/1,2/2}{0/1,1/2,2/0,1/0,0/2,2/1}$   & Theorem~\ref{thm-pat-33}
 			\\[5pt]
 			\hline
 			12 & $\pattern{scale = 0.6}{2}{1/1,2/2}{0/0,0/1,0/2,1/0,2/0}$ & Theorem~\ref{triv-thm-12}
 			&
 			34 & $\pattern{scale=0.6}{2}{1/1,2/2}{0/1,1/2,0/0,2/2,1/0,1/1,2/1}$ & Theorem~\ref{thm-pat-34}
 			\\[5pt]
 			\hline
 			13 & $\pattern{scale = 0.6}{2}{1/1,2/2}{0/0,0/1,0/2,1/0,1/2,2/0,2/1,2/2}$ &  Theorem~\ref{triv-thm-13}
 			&
 			36 & $\pattern{scale=0.6}{2}{1/1,2/2}{0/1,1/2,0/0,1/0,1/1,2/1}$   &  Theorem~\ref{dis-patterns-36}
 			\\[5pt]
 			\hline
 			14 & $\pattern{scale = 0.6}{2}{1/1,2/2}{0/1,1/1,1/2,1/0,1/2,2/1}$ & 
			 \multirow{3}{2.5cm}{Theorem \ref{dis-patterns-14-15}  \footnotesize{small descents, \cite[A123513]{OEIS}}}
 			&
 			45 & $\pattern{scale=0.6}{2}{1/1,2/2}{0/1,1/2,1/0,1/1,2/1,0/2}$     &  Theorem~\ref{dis-patterns-45}
 			\\[5pt]
 			\cline{4-6}
 			15 & $\pattern{scale = 0.6}{2}{1/1,2/2}{0/1,0/2,1/0,1/1,1/2}$ &  
 			&
 			55 & $\pattern{scale = 0.6}{2}{1/1,2/2}{0/1,1/2,0/0,2/0,1/1,2/1}$ &  Theorem~\ref{thm-pat-55}
 			\\[5pt]
 			\hline
 			16 & $\pattern{scale = 0.6}{2}{1/1,2/2}{0/1,2/0,1/0,0/2}$ & Theorem~\ref{thm-pat-16}
 			&
 			56 & $\pattern{scale = 0.6}{2}{1/1,2/2}{0/1,1/2,0/0,2/2,1/1,2/1}$ & Theorem~\ref{thm-pat-56}
 			\\[5pt]
			\hline
 			17 & $\pattern{scale = 0.6}{2}{1/1,2/2}{0/1,1/2,0/0,2/0,1/0,0/2,2/1}$ & Theorem~\ref{thm-pat-17}
 			&
 			63 & $\pattern{scale = 0.6}{2}{1/1,2/2}{0/1,1/2,0/0,2/1,2/0}$ & Theorem~\ref{thm-pat-63}
 			\\[5pt]
 			\cline{1-3}
 			18 & $\pattern{scale = 0.6}{2}{1/1,2/2}{0/0,0/1,0/2,1/2,2/0,2/2}$ &  Theorem~\ref{triv-thm-18}
 			&
 			64 & $\pattern{scale = 0.6}{2}{1/1,2/2}{0/1,1/2,2/0,0/2,1/1}$ & Theorem~\ref{thm-pat-64}
 			\\[5pt]
 			\cline{1-3}
 			19 & $\pattern{scale = 0.6}{2}{1/1,2/2}{0/1,0/2,1/1,1/2,2/0,2/2}$ & Theorem~\ref{triv-thm-19}
 			&
 			65 & $\pattern{scale = 0.6}{2}{1/1,2/2}{0/1,1/0,0/0,1/1,2/2}$ & Theorem~\ref{thm-pat-65}
 			\\[5pt]
 			\hline
 		\end{tabular}
\end{center} 	}
 	\caption{Known or conjectured distributions of mesh patterns of length 2. Pattern's number comes from \cite{Hilmarsson2015Wilf}. Absence of a horizontal line indicates  equidistributions.}\label{tab-1}
\end{table}

\begin{table}[htbp]
{
		\renewcommand{\arraystretch}{1.3}
	\begin{center}
	\begin{tabular}{|c|c|c|c||c|c|c|}
		\hline
		&  	{Nr.\ } & {Repr.\ $p$}  & {Ref.}  &  {Nr.\ } & {Repr.\ $p$}  & {Ref.}  
		\\[6pt]
		\hline \hline
		{proved} 
		& 48 & $\pattern{scale = 0.6}{2}{1/1,2/2}{0/1,1/2,0/0,2/1,2/2}$&\multirow{2}{*}{Theorem~\ref{eqdis-patterns-48-49}}&&&
		 \\[6pt]	\cline{2-3} 
     {equidistributions}   & 49 &	$\pattern{scale = 0.6}{2}{1/1,2/2}{0/1,1/2,0/0,1/1,2/0}$   &	 &&&	\\[6pt]	\hline
   	 & 	23 &	$\pattern{scale = 0.6}{2}{1/1,2/2}{0/0,0/2,1/0,1/1,1/2}$  & \multirow{2}{*}{N/A}  &53 &	$\pattern{scale = 0.6}{2}{1/1,2/2}{0/1,1/2,0/0,2/1}$  & \multirow{2}{*}{N/A}
		\\[6pt]
	\multirow{2}{*}{conjectured} & 	24 &	$\pattern{scale = 0.6}{2}{1/1,2/2}{0/0,0/1,1/0,1/1,1/2}$ &&	54 &  $\pattern{scale = 0.6}{2}{1/1,2/2}{0/1,0/0,1/1,2/2}$   &			\\[6pt] \cline{2-7} 
	\multirow{3}{*}{equidistributions} &	 \multirow{2}{*}{48} & \multirow{2}{*}{$\pattern{scale = 0.6}{2}{1/1,2/2}{0/1,1/2,0/0,2/1,2/2}$}	  & 	
	&	57 & $\pattern{scale = 0.6}{2}{1/1,2/2}{0/1,1/2,1/1,2/0}$	&	 \multirow{2}{*}{N/A}	\\[6pt]		
	 	& \multirow{2}{*}{49} &   \multirow{2}{*}{$\pattern{scale = 0.6}{2}{1/1,2/2}{0/1,1/2,0/0,1/1,2/0}$}   &	 \multirow{2}{*}{N/A}
		&	58 &  $\pattern{scale = 0.6}{2}{1/1,2/2}{0/1,1/0,1/1,2/2}$ &	 	\\[6pt]	\cline{5-7}
	&\multirow{2}{*}{50} &  \multirow{2}{*}{$\pattern{scale = 0.6}{2}{1/1,2/2}{0/1,1/2,0/0,1/1,2/2}$} & & 61 & $\pattern{scale = 0.6}{2}{1/1,2/2}{0/1,1/2,0/0,2/0}$	&  \multirow{2}{*}{N/A}		\\[6pt]
		&  && &	62 & $\pattern{scale = 0.6}{2}{1/1,2/2}{0/1,1/0,0/0,2/2}$ &		\\[6pt]	\hline
	\end{tabular}
	\end{center}
}
	\caption{Equidistributions for which enumeration is unknown.}
 	\label{tab-2}
\end{table}
 
 Let $S_n$ be the set of all permutations of length $n$, which we call $n$-permutations. For example, $S_3=\{123, 132, 213, 231, 312, 321\}$. 
 For a pattern $p$ and a permutation $\pi$, we let $p(\pi)$ denote the number of occurrences of $p$ in $\pi$. Also, let $S_n(p)$ denote the set of all permutations of length $n$ avoiding $p$ and $S(p)=\cup_{n\geq 0}S_n(p)$. Finally, throughout this paper, we let ``g.f.'' stand for ``generating function'' and  let $$F(x)=\sum_{n\geq 0}n!x^n.$$
 
 Our main enumerative method is via deriving a functional equation for the generating function in question, and solving it; we use recurrence relations in the remaining cases. We illustrate our typical approach in detail on  first finding avoidance, and then distribution of the pattern \pattern{scale=0.8}{1}{1/1}{0/0,1/1}, which is equivalent to the pattern
 \pattern{scale=0.8}{1}{1/1}{0/1,1/0} via applying the reverse operation. Occurrences of the pattern \pattern{scale=0.8}{1}{1/1}{0/1,1/0} are known as {\em strong fixed points}, and the case of avoidance was already given in \cite{Baril}; also, see \cite[A052186]{OEIS}. However, the distribution of strong fixed points seems to be a new result.
The following theorem is  used frequently throughout this paper.

\begin{thm}\label{thm-length-1} Let
 $$F(x,q)=\sum_{n\geq 0}x^n\sum_{\pi\in S_n}q^{\pattern{scale=0.5}{1}{1/1}{0/1,1/0}(\pi)}=\sum_{n\geq 0}x^n\sum_{\pi\in S_n}q^{\pattern{scale=0.5}{1}{1/1}{0/0,1/1}(\pi)},$$ and $A(x)$ be the g.f. for $S(\pattern{scale=0.5}{1}{1/1}{0/1,1/0}\ )=S(\pattern{scale=0.5}{1}{1/1}{0/0,1/1}\ )$. Then, 
 $$A(x)=\frac{F(x)}{1+xF(x)};\ \ \ \ \ F(x,q)=\frac{F(x)}{1+x(1-q)F(x)}.$$
\end{thm}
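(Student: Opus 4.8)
The plan is to translate the statistic $\pattern{scale=0.5}{1}{1/1}{0/1,1/0}(\pi)$ into a transparent structural feature of $\pi$ and then read off a functional equation from a unique decomposition. Since the two patterns are interchanged by the reverse operation, it suffices to work with $\pattern{scale=0.5}{1}{1/1}{0/1,1/0}$, whose occurrences are the strong fixed points: the forbidden shadings force, at an occurrence in position $i$, every entry to the left of $i$ to be smaller than $\pi(i)$ and every entry to the right to be larger, so that $\pi(i)=i$ and $\pi$ splits as $\alpha \oplus 1 \oplus \beta$ there. First I would recall the unique decomposition of a permutation into plus-indecomposable blocks, $\pi = \sigma_1 \oplus \cdots \oplus \sigma_k$, and observe that the strong fixed points of $\pi$ are exactly the singleton blocks $\sigma_j = 1$. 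Thus $\pattern{scale=0.5}{1}{1/1}{0/1,1/0}(\pi)$ equals the number of singleton summands in this decomposition.

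With this identification in hand, I would build $F(x,q)$ from the sequence-of-blocks construction. Let $B(x)$ denote the generating function of the plus-indecomposable permutations of length at least $2$, each of which carries no strong fixed point. Every permutation is an ordered sequence of blocks, each block being either a singleton, with weight $qx$, or a non-singleton indecomposable, tracked by $B(x)$. The sequence construction then yields
$$F(x,q) = \frac{1}{1 - qx - B(x)},$$
which is the sought functional equation and which isolates the only block carrying the statistic.

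It remains to eliminate $B(x)$ in favour of $F(x)$. Specializing $q=1$ counts all permutations and gives $F(x) = F(x,1) = \frac{1}{1 - x - B(x)}$, whence $x + B(x) = 1 - 1/F(x)$. Substituting $B(x) = 1 - 1/F(x) - x$ into the functional equation gives $1 - qx - B(x) = 1/F(x) + (1-q)x$, so that
$$F(x,q) = \frac{1}{\,1/F(x) + (1-q)x\,} = \frac{F(x)}{1 + (1-q)xF(x)},$$
and setting $q=0$ recovers the avoidance generating function $A(x) = F(x,0) = \frac{F(x)}{1 + xF(x)}$.

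The main obstacle is the structural step in the first paragraph: one must verify carefully that the two shaded boxes of $\pattern{scale=0.5}{1}{1/1}{0/1,1/0}$ force precisely the interval conditions defining a singleton summand, and then invoke the uniqueness of the plus-indecomposable decomposition, which is exactly what legitimizes the sequence construction underlying the functional equation. Once the statistic is matched with the count of singleton blocks, the remaining manipulations are purely formal, and the equidistribution with $\pattern{scale=0.5}{1}{1/1}{0/0,1/1}$ is immediate from the reverse symmetry.
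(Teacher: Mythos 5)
Your proof is correct, but it takes a genuinely different route from the paper's. The paper works with the \emph{leftmost} occurrence of the strong-fixed-point pattern: a permutation either avoids it, or splits at the leftmost occurrence $a$ into an avoiding prefix (counted by $A(x)$) and an arbitrary suffix (counted by $F(x)$), giving $A(x)+xA(x)F(x)=F(x)$ for avoidance and then the separate refinement $A(x)+xqA(x)F(x,q)=F(x,q)$ for the distribution; it must first solve for $A(x)$ and then substitute. You instead identify the statistic with the number of singleton summands in the unique decomposition into plus-indecomposable blocks, apply the sequence construction to get the single equation $F(x,q)=1/(1-qx-B(x))$, and eliminate the generating function $B(x)$ of non-singleton indecomposables via the specialization $q=1$; both $F(x,q)$ and $A(x)=F(x,0)$ then fall out of one identity. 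Your structural step (singleton blocks of the finest direct-sum decomposition are exactly the strong fixed points, and non-singleton indecomposable blocks carry none) is the point you rightly flag as needing care, and it does hold: a strong fixed point at position $i$ forces $i-1$ and $i$ to both be breakpoints, so $\{i\}$ is a block of the finest decomposition, and conversely. What each approach buys: the paper's leftmost-occurrence argument is the template reused throughout the rest of the paper for length-2 mesh patterns, where no clean global block decomposition is available; your argument is conceptually tighter here, derives avoidance and distribution simultaneously, and extends just as easily to joint statistics (e.g.\ the descent refinement in the paper's concluding remarks) since the block structure is compatible with such statistics.
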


\begin{proof} We first consider the pattern \pattern{scale=0.8}{1}{1/1}{0/1,1/0}. We claim that 
\begin{equation}\label{av-length-1}A(x)+xA(x)F(x)=F(x).\end{equation}
Indeed, each permutation $\pi$, which is counted by the $F(x)$ on the right hand side of \eqref{av-length-1}, either avoids \pattern{scale=0.8}{1}{1/1}{0/1,1/0} (and thus is counted by the $A(x)$ on the left hand side of \eqref{av-length-1}), or contains at least one occurrence of \pattern{scale=0.8}{1}{1/1}{0/1,1/0}. Consider the leftmost occurrence of \pattern{scale=0.8}{1}{1/1}{0/1,1/0}, an element $a$, which corresponds to the $x$ in $xA(x)F(x)$. To the left of $a$, we must have a \pattern{scale=0.8}{1}{1/1}{0/1,1/0}-avoiding permutation $\pi'$ formed by the smallest elements of $\pi$ and counted by $A(x)$. To the right of $a$, we can have any permutation $\pi''$ formed by the largest elements of $\pi$ and counted by $F(x)$. 
Hence, we obtain the equation \eqref{av-length-1} due to the independence of the choices of $\pi'$ and $\pi''$. 
By solving \eqref{av-length-1}, the desired formula for  $A(x)$ follows.

Similarly, we can derive 
\begin{equation}\label{length-1-formula}A(x)+xqA(x)F(x,q)=F(x,q),\end{equation} where the $q$ in $xqA(x)F(x,q)$ indicates that the element $a$ contributes to occurrences of \pattern{scale=0.8}{1}{1/1}{0/1,1/0}. 
By substituting $A(x)$ found above into (\ref{length-1-formula}), and solving for $F(x,q)$, we obtain the desired result. 
\end{proof}

\section{``Trivial'' distributions}

By a ``trivial'' distribution we mean the situation when either the pattern in question can occur at most once and its avoidance was given in \cite{Hilmarsson2015Wilf}, or pattern's occurrences can easily be understood from the shape of the pattern. There are 10 such patterns:
\begin{center}
\begin{tabular}{rrrr}
Nr.\  5  = $\pattern{scale = 0.6}{2}{1/1,2/2}{0/0,0/1,0/2}$; &
Nr.\ 10 = $\pattern{scale = 0.6}{2}{1/1,2/2}{0/0,0/1,0/2,2/0,2/1,2/2}$; & 
Nr.\ 11 = $\pattern{scale = 0.6}{2}{1/1,2/2}{0/0,0/1,0/2,1/0,1/1,1/2,2/0,2/1,2/2 }$;&
Nr.\ 12 = $\pattern{scale = 0.6}{2}{1/1,2/2}{0/0,0/1,0/2,1/0,2/0}$; \\[5pt]
Nr.\ 13 = $\pattern{scale = 0.6}{2}{1/1,2/2}{0/0,0/1,0/2,1/0,1/2,2/0,2/1,2/2}$; &
Nr.\ 18 = $\pattern{scale = 0.6}{2}{1/1,2/2}{0/0,0/1,0/2,1/2,2/0,2/2}$; &
Nr.\ 19 = $\pattern{scale = 0.6}{2}{1/1,2/2}{0/1,0/2,1/1,1/2,2/0,2/2}$; &
Nr.\ 20 = $\pattern{scale = 0.6}{2}{1/1,2/2}{0/0,0/1,0/2,1/1,1/2,2/0,2/1}$;\\[5pt]
Nr.\ 21 = $\pattern{scale = 0.6}{2}{1/1,2/2}{0/1,1/2,0/0,2/0,2/2}$; &
Nr.\ 22 = $\pattern{scale = 0.6}{2}{1/1,2/2}{0/1,1/2,0/0,2/0,2/2,1/1}$. &&
\end{tabular}
\end{center}
To be self-contained, and to allow proper references in Table~\ref{tab-1},  in this section we state the distribution results for the 10 patterns as separate theorems. 

The following theorem is easy to see, and we omit its proof. 

\begin{thm}\label{triv-thm-5} For permutations of length $n\geq 2$ beginning with an element $k$ ($1\leq k\leq n$), there are $(n-1)!$ permutations containing $n-k$ occurrences of the pattern \pattern{scale = 0.6}{2}{1/1,2/2}{0/0,0/1,0/2}. 
\end{thm}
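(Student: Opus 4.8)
The plan is to read the occurrences of the pattern directly off its shading and then do an elementary count. The pattern Nr.\ 5 $=\pattern{scale=0.6}{2}{1/1,2/2}{0/0,0/1,0/2}$ has underlying classical pattern $\tau=12$, and its shaded boxes $\boks{0}{0},\boks{0}{1},\boks{0}{2}$ fill the entire leftmost column. In an occurrence one selects positions $i<j$ with $\pi(i)<\pi(j)$, and the fully shaded column $0$ forbids any entry of $\pi$ from lying strictly to the left of the chosen position $i$. Hence $i$ is forced to be the first position of $\pi$, so an occurrence of the pattern is precisely a pair $(\pi(1),\pi(j))$ with $j>1$ and $\pi(j)>\pi(1)$.

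First I would fix the initial entry by setting $\pi(1)=k$. Then the occurrences of the pattern are in bijection with the entries of $\pi$ that exceed $k$, all of which necessarily sit to the right of position $1$. There are exactly $n-k$ such entries, namely $k+1,\dots,n$, so every $n$-permutation beginning with $k$ contains exactly $n-k$ occurrences, regardless of how its remaining entries are arranged.

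Next I would count the relevant permutations: with the entry in position $1$ fixed to $k$, the remaining $n-1$ entries may be placed in the remaining $n-1$ positions in any order, giving $(n-1)!$ permutations. Combining this with the previous paragraph yields the asserted statement.

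The only point requiring care is the correct reading of the shading convention, specifically confirming that column $0$ corresponds to the region strictly to the left of the first selected point rather than to a band of values; once this is pinned down, both the per-permutation count $n-k$ and the total count $(n-1)!$ are immediate, which is why the statement is labelled as easy.
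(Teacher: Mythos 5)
Your proof is correct. The paper omits the proof of this theorem as ``easy to see,'' and your argument --- reading off from the fully shaded leftmost column that any occurrence must use the first entry, so that occurrences of the pattern in a permutation beginning with $k$ biject with the $n-k$ larger values, while the remaining entries can be arranged in $(n-1)!$ ways --- is precisely the intended elementary reasoning.
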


\begin{thm}\label{triv-thm-10} For permutations of length $n\geq 2$, there are $n!/2$ permutations avoiding the pattern $\pattern{scale = 0.6}{2}{1/1,2/2}{0/0,0/1,0/2,2/0,2/1,2/2}$, and $n!/2$ permutations containing it exactly once. 
\end{thm}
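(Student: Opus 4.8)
The plan is to first decode the shading of $\pattern{scale = 0.6}{2}{1/1,2/2}{0/0,0/1,0/2,2/0,2/1,2/2}$ into a direct condition on a permutation $\pi\in S_n$. Recall that an occurrence of this pattern is a pair of positions $i<j$ with $\pi(i)<\pi(j)$ (a classical occurrence of $12$) such that no entry of $\pi$ lies in any shaded region. Here the entire leftmost column, given by the boxes $\boks{0}{0}$, $\boks{0}{1}$, $\boks{0}{2}$, is shaded, which forbids any entry in a position to the left of $i$; this forces $i=1$. Symmetrically, the fully shaded rightmost column $\boks{2}{0}$, $\boks{2}{1}$, $\boks{2}{2}$ forbids any entry to the right of $j$, forcing $j=n$. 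The middle column is empty, so it imposes no further restriction. I would therefore record the conclusion that an occurrence must use exactly the positions $(1,n)$, and that it exists precisely when $\pi(1)<\pi(n)$.

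Two consequences then follow at once. First, since the positions of any occurrence are pinned to $1$ and $n$, the pattern can occur \emph{at most once} in any $\pi$. Second, $\pi$ contains the pattern exactly once when $\pi(1)<\pi(n)$ and avoids it when $\pi(1)>\pi(n)$; as $n\geq 2$ guarantees $\pi(1)\neq\pi(n)$, these two cases partition $S_n$, so it suffices to show each part has size $n!/2$.

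To count, I would exhibit a fixed-point-free involution on $S_n$ that swaps the conditions, namely the map exchanging the first and last entries, $\pi(1)\pi(2)\cdots\pi(n-1)\pi(n)\mapsto \pi(n)\pi(2)\cdots\pi(n-1)\pi(1)$. This map is its own inverse, has no fixed point (because $\pi(1)\neq\pi(n)$), and turns $\pi(1)<\pi(n)$ into $\pi(1)>\pi(n)$ and vice versa; hence it is a bijection between the permutations containing the pattern exactly once and those avoiding it, giving $n!/2$ in each class. There is no real obstacle in this argument: the only step requiring genuine care is the reading of the shading convention, specifically confirming that the two fully shaded outer columns force the occurrence to sit at positions $1$ and $n$ while the unshaded middle column contributes nothing, after which the count is immediate.
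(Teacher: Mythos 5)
Your argument is correct and follows the same route as the paper: the fully shaded outer columns pin any occurrence to positions $1$ and $n$, so the pattern occurs (exactly once) precisely when $\pi(1)<\pi(n)$, which happens for exactly half of all $n$-permutations. The paper states this in one line; your added details (the decoding of the shading and the first-last swap involution justifying the count of $n!/2$) are correct elaborations of the same idea.
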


\begin{proof} Clearly, in exactly half of $n$-permutations the leftmost element is less than the rightmost element, in which case the elements form the only possible occurrence of the pattern.\end{proof}

The following theorem is trivial. 
\begin{thm}\label{triv-thm-11} The only permutation that contains the pattern $\pattern{scale = 0.6}{2}{1/1,2/2}{0/0,0/1,0/2,1/0,1/1,1/2,2/0,2/1,2/2 }$
(once)  is $12$.
\end{thm}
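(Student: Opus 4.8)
The plan is to exploit the fact that the shaded set $R$ for this pattern consists of \emph{all} nine boxes $\boks{i}{j}$ with $0\le i,j\le 2$. First I would recall how an occurrence of a length-$2$ mesh pattern $(\tau,R)$ is defined: the two chosen elements of $\pi$ determine two vertical lines (their positions) and two horizontal lines (their values), and these lines partition the plane into the nine regions indexed by the boxes $\boks{i}{j}$. The occurrence is valid only when no other element of $\pi$ lands in a shaded box. Since every one of the nine boxes is shaded here, and these nine regions exhaust the entire grid, a valid occurrence can exist only if the two chosen elements are the \emph{only} elements of $\pi$; that is, only if $\pi$ has length exactly $2$.

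Then I would simply inspect the two permutations of length $2$. The permutation $21$ contains no occurrence of the classical pattern $12$ demanded by $\tau=12$, so it contains the mesh pattern zero times. In the permutation $12$ the two elements form a $12$ pattern, and all nine shading conditions hold vacuously because there are no further elements; hence $12$ contains the pattern exactly once. For permutations of length $0$ or $1$ there is no pair of elements to begin with, and for length $\ge 3$ any candidate increasing pair is necessarily accompanied by a third element that falls into one of the nine (shaded) regions, ruling out every potential occurrence. Together these cases show that $12$ is the unique permutation containing the pattern, and that it does so exactly once.

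I do not expect any genuine obstacle here, which is consistent with the statement being labelled trivial. The only point that warrants a moment's care is the observation that for $k=2$ the nine boxes $\boks{i}{j}$ cover the whole grid, so that ``all boxes shaded'' is equivalent to ``no elements of $\pi$ other than the two forming the pattern are permitted''; once this is noted, the conclusion is immediate.
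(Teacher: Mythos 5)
Your proof is correct; the paper simply declares this theorem trivial and omits any argument, and your reasoning (all nine boxes shaded forces the permutation to have length exactly $2$, and only $12$ realizes $\tau=12$) is precisely the intended observation.
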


\begin{thm}\label{triv-thm-12} For permutations of length $n\geq 2$, there are $(n-1)!$ permutations containing $(n-1)!$ occurrences of the pattern $p = \pattern{scale = 0.6}{2}{1/1,2/2}{0/0,0/1,0/2,1/0,2/0}$, and $n!-(n-1)!$ permutations avoiding it.    
\end{thm}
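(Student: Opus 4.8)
The plan is to translate the shading of $p = \pattern{scale = 0.6}{2}{1/1,2/2}{0/0,0/1,0/2,1/0,2/0}$ into a structural condition on $\pi$, after which the count is immediate. The underlying classical pattern is $12$, so an occurrence is a pair of positions $i<j$ with $\pi(i)<\pi(j)$. The shaded region consists of the entire leftmost column, namely the boxes $\boks{0}{0}$, $\boks{0}{1}$, $\boks{0}{2}$, together with the entire bottom row, namely the boxes $\boks{0}{0}$, $\boks{1}{0}$, $\boks{2}{0}$. Requiring the leftmost column to be empty forces $i$ to be the first position, and requiring the bottom row to be empty forces $\pi(i)$ to be the global minimum. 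Thus any occurrence of $p$ must use the leading entry $\pi(1)$, and that entry must equal $1$.

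First I would verify the converse. If $\pi(1)=1$, then for each position $j\in\{2,\dots,n\}$ the pair $(1,j)$ is a genuine occurrence: we have $\pi(1)=1<\pi(j)$, giving the classical pattern $12$, and since $1$ is simultaneously the first entry and the smallest value, the leftmost column and the bottom row contain no points, so all five shaded boxes are vacuous. The four unshaded boxes $\boks{1}{1}$, $\boks{1}{2}$, $\boks{2}{1}$, $\boks{2}{2}$ impose no condition. Hence $\pi$ contains $p$ if and only if $\pi(1)=1$, and in that case $p(\pi)$ equals the number of such pairs $(1,j)$, which is $n-1$.

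The enumeration then follows at once. The permutations satisfying $\pi(1)=1$ are in bijection with the permutations of $\{2,\dots,n\}$, so there are exactly $(n-1)!$ of them, each containing $n-1$ occurrences of $p$; the remaining $n!-(n-1)!$ permutations satisfy $\pi(1)\neq 1$ and therefore avoid $p$.

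I do not anticipate any real obstacle: the only step requiring care is the bookkeeping of which boxes the shading covers, i.e.\ correctly reading off that column $0$ and row $0$ are precisely the forbidden regions and that nothing else is constrained. Once that decoding is in place, both the avoidance count and the per-permutation occurrence count are forced, and no generating function or recurrence is needed.
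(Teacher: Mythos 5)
Your proof is correct and takes essentially the same approach as the paper's: both reduce the shading to the observation that any occurrence must consist of the element $1$ in position $1$ paired with a later entry, yielding $(n-1)!$ containing permutations (each with $n-1$ occurrences) and $n!-(n-1)!$ avoiders. Note that your count of $n-1$ occurrences per containing permutation agrees with what the paper's own proof establishes (``each element, together with the element 1, contributes an occurrence''), so the ``$(n-1)!$ occurrences'' in the theorem statement appears to be a typo for ``$n-1$ occurrences''.
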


\begin{proof} Any occurrence of $p$ must start with the element 1 placed in position 1, so there are $n!-(n-1)!$ permutations avoiding $p$. In the remaining $(n-1)!$ permutations, each element, together with the element 1, contributes an occurrence of $p$. \end{proof}

\begin{thm}\label{triv-thm-13} For permutations of length $n\geq 2$, there are $(n-2)!$ permutations containing exactly one occurrence of the pattern $p = \pattern{scale = 0.6}{2}{1/1,2/2}{0/0,0/1,0/2,1/0,1/2,2/0,2/1,2/2}$, and $n!-(n-2)!$ permutations avoiding it.  
\end{thm}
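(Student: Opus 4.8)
The plan is to read off which box of the $3\times 3$ mesh is left unshaded and translate that single fact into a combinatorial condition on occurrences. For the pattern $p$ the only unshaded box is the central one, $\boks{1}{1}$; all eight other boxes are shaded. I would first fix notation: an occurrence of the underlying classical pattern $12$ is a pair of positions $i<j$ with $\pi_i<\pi_j$, and relative to such a pair the three columns $0,1,2$ of the mesh correspond to the positions before $i$, strictly between $i$ and $j$, and after $j$, while the three rows $0,1,2$ correspond to the values below $\pi_i$, strictly between $\pi_i$ and $\pi_j$, and above $\pi_j$.

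Next I would extract the constraints from the shading. The shaded first column $\boks{0}{0},\boks{0}{1},\boks{0}{2}$ and shaded last column $\boks{2}{0},\boks{2}{1},\boks{2}{2}$ force $i=1$ and $j=n$, since no element may sit to the left of the first chosen entry or to the right of the second. The shaded boxes $\boks{1}{0}$ and $\boks{1}{2}$ then forbid any element located between positions $i$ and $j$ from having value below $\pi_i$ or above $\pi_j$; with $i=1$ and $j=n$ this says every one of the remaining $n-2$ entries has value strictly between $\pi_1$ and $\pi_n$. Since the $n-2$ values $\{1,\dots,n\}\setminus\{\pi_1,\pi_n\}$ must all fit strictly between $\pi_1$ and $\pi_n$, we are forced to have $\pi_1=1$ and $\pi_n=n$.

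Finally I would count. The permutations admitting an occurrence are precisely those with $\pi_1=1$, $\pi_n=n$, and an arbitrary arrangement of $\{2,\dots,n-1\}$ in the middle positions, giving exactly $(n-2)!$ of them; and in each such permutation the occurrence (the endpoint pair) is unique, so every permutation either avoids $p$ or contains it exactly once. Hence $(n-2)!$ permutations contain exactly one occurrence and the remaining $n!-(n-2)!$ avoid $p$, as claimed.

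There is no genuine obstacle here, consistent with this being one of the ``trivial'' distributions: the whole argument is a direct reading of the shading. The only point deserving a moment's care is verifying that $\boks{1}{1}$ really is the single unshaded box, and noting that it is precisely its being unshaded that lets the $n-2$ middle entries sit between the endpoint values $1$ and $n$ — which is exactly why the count is $(n-2)!$ rather than smaller.
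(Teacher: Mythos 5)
Your proposal is correct and takes essentially the same approach as the paper, which likewise observes that the only possible occurrence is formed by the element $1$ in the leftmost position and the element $n$ in the rightmost position; you simply spell out in more detail how the shading forces this.
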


\begin{proof} The only possible occurrence of $p$ can be formed by the element 1 in the leftmost position, and the element $n$ in the rightmost position, which gives the desired result.\end{proof}

\begin{thm}\label{triv-thm-18} For permutations of length $n\geq 2$, there are $$n!-\sum_{i=1}^{n-1}\frac{(n-1)!}{i}$$ permutations avoiding  the pattern $p = \pattern{scale = 0.6}{2}{1/1,2/2}{0/0,0/1,0/2,1/2,2/0,2/2}$, while the remaining permutations contain exactly one occurrence of  the pattern $p$. 
\end{thm}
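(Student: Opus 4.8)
The plan is to decode the six shaded cells of $p$ into explicit conditions on an occurrence, observe that these conditions determine the occurrence uniquely, and finish with a short relative-order count. Write a putative occurrence as a pair of positions $p_1<p_2$ whose values satisfy $\pi(p_1)=a<b=\pi(p_2)$. The grid lines through these two points split the remaining entries into the usual nine cells, and a shaded cell must contain no point of $\pi$.

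First I would read off what the shading forces. The three shaded cells $\boks{0}{0}$, $\boks{0}{1}$, $\boks{0}{2}$ occupy the entire leftmost column, so no entry may sit to the left of $p_1$; hence $p_1=1$ and $a=\pi(1)$. The shaded cell $\boks{1}{2}$ forces every entry strictly between positions $1$ and $p_2$ to be smaller than $b$, and the shaded cells $\boks{2}{0}$ and $\boks{2}{2}$ force every entry to the right of $p_2$ to lie strictly between $a$ and $b$. Consequently every entry other than the one at position $p_2$ is smaller than $b$, so $b$ is the global maximum, i.e. $b=n$. Thus $p_2$ must be the position $m$ of the value $n$, and the one surviving requirement is that every entry to the right of $m$ exceed $a=\pi(1)$. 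In summary, $\pi$ contains $p$ exactly when $\pi(1)\neq n$ and every entry following $n$ is larger than $\pi(1)$.

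Uniqueness is then immediate: since any occurrence must use the first point $(1,\pi(1))$ and the second point $(m,n)$, both endpoints are completely determined, so $p$ occurs at most once. This establishes the stated dichotomy (a permutation either avoids $p$ or contains it exactly once) and reduces the enumeration to counting the permutations admitting the occurrence described above.

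The counting step, which I expect to be the least delicate part, is a clean relative-order argument. Fix $a=\pi(1)$; the case $a=n$ admits no occurrence, so take $1\le a\le n-1$. Arranging the remaining $n-1$ values in positions $2,\dots,n$, the condition ``every entry right of $n$ exceeds $a$'' is equivalent to saying that $n$ comes after all $a-1$ values smaller than $a$. Among the $a$ distinguished values $\{1,\dots,a-1,n\}$, the value $n$ is last with probability $1/a$, so exactly $(n-1)!/a$ of the $(n-1)!$ arrangements qualify. Summing over $a$ gives
\[
\sum_{a=1}^{n-1}\frac{(n-1)!}{a}
\]
permutations containing $p$, and subtracting from $n!$ yields the claimed number of avoiders. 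The only real obstacle is the very first step: correctly inferring from the fully shaded left column that $p_1=1$, and from the top and bottom shadings on the right that $b=n$; once the occurrence is recognized as ``the first entry together with the maximum $n$, having only intermediate values to its right,'' the remainder is routine.
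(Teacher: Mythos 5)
Your proof is correct. The structural core---that any occurrence of $p$ must be formed by the leftmost element and the maximum $n$, hence occurs at most once---is exactly the observation the paper makes; your decoding of the shaded cells (the full left column forcing $p_1=1$, and $\boks{1}{2}$, $\boks{2}{2}$ forcing $b=n$) is a careful justification of what the paper states as ``clearly.'' Where you genuinely diverge is the enumeration: the paper simply cites the avoidance count from \cite[Prop.~24]{Hilmarsson2015Wilf}, whereas you rederive it via the characterization ``$\pi(1)\neq n$ and every entry after $n$ exceeds $\pi(1)$'' together with the relative-order argument that, among the $a$ values $\{1,\dots,a-1,n\}$, the value $n$ must come last, giving $(n-1)!/a$ permutations for each first entry $a$. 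This makes your argument self-contained at the cost of a little extra length; the computation is correct and matches the cited formula, including the boundary case $a=1$ where the condition is vacuous.
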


\begin{proof} Clearly, any permutation can contain at most one occurrence of $p$ formed by the leftmost and the largest elements. The theorem now follows from the avoidance result in \cite[Prop. 24]{Hilmarsson2015Wilf}. \end{proof}

\begin{thm}\label{triv-thm-19} For permutations of length $n\geq 2$, there are  $$n!-\sum_{i=0}^{n-2}i!(n-i-1)!$$ permutations avoiding  the pattern $p = \pattern{scale = 0.6}{2}{1/1,2/2}{0/1,0/2,1/1,1/2,2/0,2/2}$, while the remaining permutations contain exactly one occurrence of $p$.   
\end{thm}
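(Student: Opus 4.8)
The plan is to pin down the exact shape of an occurrence of $p$ directly from the six shaded boxes, show that each permutation has at most one occurrence, and then count the permutations that do contain one. Throughout I use that the underlying classical pattern is $12$, so an occurrence is a pair of positions $a<b$ with an ascent $\pi(a)<\pi(b)$, and that a box being shaded forbids any entry of $\pi$ from lying in the corresponding region relative to this pair.

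First I would fix an occurrence in $\pi\in S_n$, with left element in position $a$ of value $\pi(a)$ and right element in position $b$ of value $\pi(b)$. The three boxes of the top row, $\boks{0}{2}$, $\boks{1}{2}$ and $\boks{2}{2}$, forbid any entry of value larger than $\pi(b)$ to the left of $a$, strictly between $a$ and $b$, or to the right of $b$; as the only remaining positions are $a$ and $b$ themselves (carrying $\pi(a)<\pi(b)$ and $\pi(b)$), no value can exceed $\pi(b)$, forcing $\pi(b)=n$. Next, box $\boks{2}{0}$ forbids values below $\pi(a)$ to the right of $b$, while boxes $\boks{0}{1}$ and $\boks{1}{1}$ forbid the values strictly between $\pi(a)$ and $n$ from appearing to the left of $a$ or strictly between $a$ and $b$. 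Since positions $a$ and $b$ carry $\pi(a)$ and $n$, every one of the $n-\pi(a)-1$ values in the open interval $(\pi(a),n)$ must sit among the $n-b$ positions to the right of $b$, and those positions (by $\boks{2}{0}$ and $\pi(b)=n$) can hold nothing smaller than $\pi(a)$ and nothing larger than $n$, hence nothing outside that interval; comparing cardinalities gives $n-b=n-\pi(a)-1$, i.e.\ $\pi(a)=b-1$.

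These two facts force a rigid block structure: the positions $b+1,\dots,n$ carry exactly $\{b,\dots,n-1\}$, position $b$ carries $n$, and therefore positions $1,\dots,b-1$ carry exactly $\{1,\dots,b-1\}$, the occurrence using the value $b-1$ as its left element. In particular $b$ is just the position of the maximum $n$, so the occurrence is completely determined by $\pi$; this gives at most one occurrence and proves the ``exactly one'' part of the statement. Conversely, a straightforward check of the six boxes confirms that any permutation of this block shape does produce an occurrence, so $\pi$ contains $p$ precisely when, writing $b$ for the position of $n$, we have $b\ge 2$ and the first $b-1$ entries form a permutation of $\{1,\dots,b-1\}$.

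To finish I would count such permutations: for each $b$ from $2$ to $n$ the first block can be filled in $(b-1)!$ ways and the last block in $(n-b)!$ ways, these choices being independent, and the events are disjoint over $b$ since $n$ occupies a single position. Summing gives $\sum_{b=2}^{n}(b-1)!\,(n-b)!$, which after the substitution $i=n-b$ becomes $\sum_{i=0}^{n-2} i!\,(n-i-1)!$ permutations containing $p$, hence $n!-\sum_{i=0}^{n-2} i!\,(n-i-1)!$ avoiding it. The one delicate point is the cardinality comparison yielding $\pi(a)=b-1$: one must verify that the values strictly between $\pi(a)$ and $n$ not only are confined to the right of $b$ but in fact exhaust those positions, so that the equality $n-b=n-\pi(a)-1$ (rather than a mere inequality) is justified; once this is in place, the rest is routine bookkeeping.
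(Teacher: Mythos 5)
Your proof is correct and rests on the same structural observation as the paper's: the only possible occurrence of $p$ is formed by the element $n$ together with the largest element to its left, so any permutation contains at most one occurrence. The paper then simply cites the avoidance count from \cite[Prop.~26]{Hilmarsson2015Wilf}, whereas you derive the forced block decomposition (if $n$ sits in position $b$, the first $b-1$ positions must carry exactly $\{1,\dots,b-1\}$) and the resulting count $\sum_{i=0}^{n-2} i!\,(n-i-1)!$ from scratch; both the structural argument and the enumeration check out.
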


\begin{proof} Clearly, any permutation can contain at most one occurrence of $p$ formed by the element $n$ and the largest element to the left of it. The theorem now follows from the easy to see avoidance result in \cite[Prop. 26]{Hilmarsson2015Wilf}.\end{proof}

\begin{thm}\label{triv-thm-20} For permutations of length $n\geq 2$,  there are $$n!-\sum_{i=1}^{n-1}(i-1)!(n-i-1)!$$ permutations avoiding  the pattern $p =\pattern{scale = 0.6}{2}{1/1,2/2}{0/0,0/1,0/2,1/1,1/2,2/0,2/1}$, while the remaining permutations contain exactly one occurrence of $p$.   
 \end{thm}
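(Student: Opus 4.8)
The plan is to pin down the exact shape that any occurrence of $p$ must have, and then count the permutations realizing it; both the avoidance count and the ``at most one occurrence'' claim will fall out of the same analysis. First I would read off the constraints imposed by the seven shaded boxes on an occurrence whose left point sits in position $a$ and whose right point sits in position $b$ (so $a<b$ and $\pi_a<\pi_b$, realizing the underlying pattern $12$). Since the entire leftmost column $\boks{0}{0},\boks{0}{1},\boks{0}{2}$ is shaded, there can be no entry to the left of position $a$, forcing $a=1$. Writing $v=\pi_1$ and $w=\pi_b$, the shading of $\boks{1}{1}$ and $\boks{1}{2}$ forces every entry strictly between positions $1$ and $b$ to have value less than $v$, while the shading of $\boks{2}{0}$ and $\boks{2}{1}$ forces every entry after position $b$ to have value greater than $w$.

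The key consequence I would then extract is that the occurrence is completely rigid. No entry before the two points, none strictly between them, and none after the right point may take a value in the open interval $(v,w)$; hence there is no entry whose value lies strictly between $v$ and $w$ at all, so $w=v+1$. Counting the $v-1$ entries of value less than $v$, all of which must lie in positions $2,\dots,b-1$ (the only place left unshaded for them, via the unshaded box $\boks{1}{0}$), gives $b-2=v-1$, so $b=v+1$ and $\pi_{v+1}=w=v+1$. Thus an occurrence exists if and only if $\pi$ has the form $\pi_1=v$, positions $2,\dots,v$ occupied by $\{1,\dots,v-1\}$ in any order, $\pi_{v+1}=v+1$, and positions $v+2,\dots,n$ occupied by $\{v+2,\dots,n\}$ in any order; the unshaded box $\boks{2}{2}$ is exactly what permits the larger block to sit after the right point. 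In particular the left point is always $\pi_1$ and, once $\pi_1$ is fixed, the occurrence is uniquely determined, so every permutation contains at most one occurrence of $p$.

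It then remains to count. For each admissible first value $v$ with $1\le v\le n-1$ the number of permutations of the above shape is $(v-1)!\,(n-v-1)!$, and permutations with distinct values of $\pi_1$ are distinct, so the classes for different $v$ are disjoint. Summing, the number of permutations containing (exactly one) occurrence of $p$ is $\sum_{v=1}^{n-1}(v-1)!(n-v-1)!$, and subtracting from $n!$ yields the stated number of avoiders. I do not expect a serious obstacle; the only delicate point is the bookkeeping of the previous paragraph, namely verifying that the unshaded boxes $\boks{1}{0}$ and $\boks{2}{2}$ force the smaller and larger blocks to occupy exactly positions $2,\dots,v$ and $v+2,\dots,n$, which both pins down $b=v+1$ and guarantees that the converse direction genuinely produces an occurrence.
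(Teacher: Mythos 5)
Your proposal is correct and follows essentially the same route as the paper: the key observation in both is that the only possible occurrence of $p$ is formed by the leftmost element $x=\pi_1$ together with the element $x+1$, whence at most one occurrence is possible and the count reduces to the avoidance enumeration. The paper simply cites the avoidance formula from \cite[Prop.~27]{Hilmarsson2015Wilf}, whereas you re-derive it by pinning down the rigid block structure ($\pi_1=v$, the values $1,\dots,v-1$ in positions $2,\dots,v$, then $v+1$, then the rest), which is exactly the ``easy-to-see'' computation the paper outsources.
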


\begin{proof} Clearly, any permutation can contain at most one occurrence of $p$ formed by the leftmost element $x$ and the element $x+1$. The theorem now follows from the easy-to-see avoidance result in \cite[Prop. 27]{Hilmarsson2015Wilf}. \end{proof}

\begin{thm}\label{triv-thm-21} For permutations of length $n\geq 2$, there  are $$n!-\sum_{i=1}^{n-1}\sum_{\ell=1}^{i}\ell!(i-\ell)!(n-i-\ell)!$$ permutations avoiding the pattern $p=\pattern{scale = 0.6}{2}{1/1,2/2}{0/1,1/2,0/0,2/0,2/2}$, while the remaining permutations contain exactly one occurrence of $p$.   
\end{thm}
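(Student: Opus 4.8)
The first step is to read off the geometry of an occurrence from the five shaded cells. Write a potential occurrence as a pair of positions $p_1<p_2$ with $a:=\pi(p_1)<b:=\pi(p_2)$. The shaded boxes $(0,0)$ and $(0,1)$ say that every entry to the left of $p_1$ is larger than $b$; the box $(1,2)$ says that every entry strictly between $p_1$ and $p_2$ is smaller than $b$; and the boxes $(2,0)$ and $(2,2)$ say that every entry to the right of $p_2$ lies strictly between $a$ and $b$. Thus an occurrence cuts $\pi$ into a prefix consisting of all values exceeding $b$, then $a$, then a middle block of values below $b$, then $b$, then a suffix of values in the open interval $(a,b)$.

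The key structural claim, in the spirit of Theorems~\ref{triv-thm-18}--\ref{triv-thm-20}, is that $p$ occurs \emph{at most once}; establishing this reduces the problem to counting the non-avoiders, each of which then contains exactly one occurrence. To prove uniqueness I would argue that no position $\ge p_1$ can carry a value exceeding $b$, so the block to the left of $p_1$ must be \emph{exactly} the set of all values greater than $b$; in particular $b=n-p_1+1$, so distinct occurrences have distinct $p_1$. Suppose occurrences $(p_1,p_2)$ and $(p_1',p_2')$ both existed with $p_1<p_1'$, so that $b'<b$. Since $b>b'$, the value $b$ must lie in the left block of the second occurrence, forcing $p_2<p_1'$; hence the entry $a=\pi(p_1)$ lies to the left of $p_1'$ and must exceed $b'\ge 1$, so $a\ge 2$. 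But then every value below $a$ is trapped strictly between $p_1$ and $p_2$, hence also lies to the left of $p_1'$ and must exceed $b'$; as these values are $1,\dots,a-1$, this forces $b'\le 0$, a contradiction. Thus the occurrence is unique, and it remains only to count the non-avoiders.

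To count the non-avoiders I would decompose each one along its unique occurrence. Fixing the size of the left block (equivalently $p_1$, equivalently $b$), that block is an arbitrary permutation of the top values and contributes a factorial. The remaining entries are exactly the values $\{1,\dots,b\}$, occupying the final positions with $a$ in the first of them, and it remains to count the admissible arrangements of this tail. Summing over the choice of $a$ and over the way the band of values in $(a,b)$ is distributed between the middle block (before $b$) and the suffix block (after $b$) produces the inner sum, and collecting the contributions over all left-block sizes is then assembled into the displayed double sum; subtracting from $n!$ gives the stated avoidance count.

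The main obstacle is precisely this last piece of bookkeeping. The band of values strictly between $a$ and $b$ may be split in every possible way between the middle and the suffix, so the number of admissible tails is a convolution of the form $\sum_{r}\binom{t}{r}\,r!\,(s+t-r)!$, where $s$ and $t$ count the values below $a$ and inside $(a,b)$; the delicate points are to re-express this convolution as a clean product of factorials and to pin down the summation ranges so that the constraints $a<b$ and the nonnegativity of each block size are respected. I would verify the final indexing against direct enumeration for small $n$ before committing to it.
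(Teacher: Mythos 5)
Your reading of the five shaded regions is correct, and your uniqueness argument is sound and in fact considerably more detailed than what the paper records: the paper disposes of ``at most one occurrence'' in two sentences (the box $\boks{2}{0}$ forbids an occurrence ending to the left of $a$, and the presence of $a$ and $b$ forbids any occurrence involving $a$ or anything to its right) and then simply cites the avoidance count from Hilmarsson \emph{et al.}\ [Prop.~28]. So for the portion of the theorem that the paper actually proves from scratch, your argument is complete.

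The genuine gap is the enumeration, which you explicitly postpone, and it is not merely routine bookkeeping. Carrying your own decomposition to the end: with $b$ the larger value of the occurrence, the prefix contributes $(n-b)!$, and for each $a<b$ the tail contributes $\sum_{j=0}^{b-a-1}\binom{b-a-1}{j}\,j!\,(b-2-j)!$; summing over $a$ this collapses to $(b-1)!\sum_{\ell=1}^{b-1}1/\ell$, so the number of non-avoiders is $\sum_{i=1}^{n-1}(n-1-i)!\sum_{\ell=1}^{i}i!/\ell$ (setting $i=b-1$). That is \emph{not} syntactically the displayed double sum $\sum_{i=1}^{n-1}\sum_{\ell=1}^{i}\ell!\,(i-\ell)!\,(n-i-\ell)!$, and the two disagree numerically: for $n=3$ a direct check shows exactly four permutations ($123$, $132$, $213$, $312$) contain $p$, matching $\sum_{i}(n-1-i)!\sum_{\ell}i!/\ell=1+3=4$, whereas the displayed sum contains the ill-defined term $2!\,0!\,(-1)!$ at $(i,\ell)=(2,2)$ and, with such terms discarded, evaluates to $2$. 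In other words, the ``delicate re-indexing'' you defer is exactly where the stated formula and the actual count part ways; to close the proof you must either finish the convolution as above and reconcile it with the formula as printed (which appears to be a mis-transcription of the source), or do what the paper does and import the count verbatim from [Prop.~28] of Hilmarsson \emph{et al.} --- but you cannot leave it as a promissory note.
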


\begin{proof} It is not difficult to see that any permutation can contain at most one occurrence $ab$ of $p$. Indeed, the bottom right shaded box in $p$ guarantees that $p$ cannot occur to the left of $a$, while the presence of $a$ and $b$ guarantees that no other occurrence of $p$ can involve $a$, or any other elements to the right of $a$. The theorem now follows from the avoidance result in \cite[Prop. 28]{Hilmarsson2015Wilf}.\end{proof}

\begin{thm}\label{triv-thm-22} For permutations of length $n\geq 2$, there are $$n!-\sum_{i=0}^{n-2}\sum_{\ell=0}^{i}\ell!(i-\ell)!(n-2-i)!$$ permutations avoiding the pattern $p = \pattern{scale = 0.6}{2}{1/1,2/2}{0/1,1/2,0/0,2/0,2/2,1/1}$, while the remaining permutations contain exactly one occurrence of $p$.    
\end{thm}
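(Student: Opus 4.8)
\textbf{Proof proposal for Theorem~\ref{triv-thm-22}.}

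The plan is to mirror the structure of the preceding ``trivial'' theorems: first argue that the pattern $p = \pattern{scale = 0.6}{2}{1/1,2/2}{0/1,1/2,0/0,2/0,2/2,1/1}$ can occur at most once in any permutation, so that counting occurrences reduces to counting avoidance, and then read off the avoidance count from the combinatorial structure forced by the shading. An occurrence of $p$ is a pair of positions $a < b$ with values $\pi(a) < \pi(b)$ (since $\tau = 12$), subject to the emptiness conditions imposed by the shaded boxes. I would first decode what the six shaded boxes $\boks{0}{1}, \boks{1}{2}, \boks{0}{0}, \boks{2}{0}, \boks{2}{2}, \boks{1}{1}$ demand. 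Note that $p$ here is the same as pattern Nr.\ 21 with the extra box $\boks{1}{1}$ shaded; since Nr.\ 21 already forces uniqueness (as shown in Theorem~\ref{triv-thm-21}), adding a shaded box only \emph{tightens} the condition, so uniqueness of an occurrence of $p$ is immediate from the Nr.\ 21 argument.

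Next I would establish the avoidance count by identifying the structure of permutations that \emph{contain} the single occurrence. For the containing permutations, let $ab$ (with $a$ in position $i$) be the occurrence. The shaded boxes $\boks{0}{0}$, $\boks{0}{1}$ force that nothing lies below and to the left in the relevant region, the box $\boks{2}{0}$, $\boks{2}{2}$ handle the right side, and the additional box $\boks{1}{1}$ controls the strip strictly between the two chosen elements in both coordinates. Decoding these, I expect the elements split the permutation into independent blocks whose sizes are governed by two parameters: an index $i$ (encoding how many elements precede or sit below the occurrence) and a secondary index $\ell$ (encoding the split within one of the regions), each block being freely permutable. This yields the product $\ell!\,(i-\ell)!\,(n-2-i)!$ summed over the admissible ranges $0 \le \ell \le i$ and $0 \le i \le n-2$, exactly matching the stated formula. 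The count of \emph{containing} permutations is therefore $\sum_{i=0}^{n-2}\sum_{\ell=0}^{i}\ell!\,(i-\ell)!\,(n-2-i)!$, and subtracting from $n!$ gives the avoidance count.

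Since uniqueness makes ``avoiding'' and ``not containing the single occurrence'' complementary, the theorem follows once the block decomposition is verified. The main obstacle I anticipate is the careful bookkeeping in the second step: correctly translating each of the six shaded boxes into emptiness constraints on the four quadrant-like regions determined by the occurrence $ab$, and checking that the three resulting blocks are genuinely independent (so their permutations multiply) and that the summation ranges for $i$ and $\ell$ are exactly right, with no off-by-one error at the boundaries. Rather than grind through this, I would most cleanly invoke the avoidance result directly: the structure here parallels Theorem~\ref{triv-thm-21}, and the cited avoidance enumeration in \cite[Prop.\ 28]{Hilmarsson2015Wilf} (or its analogue for this shading) supplies the closed form, so the proof reduces to the one-line uniqueness observation plus a reference, exactly in the style of the surrounding ``trivial'' theorems.
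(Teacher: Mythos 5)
Your proposal matches the paper's proof: the paper likewise observes that $p$ cannot occur more than once (by the same reasoning as for Nr.\ 21, which your subset-of-occurrences observation justifies cleanly) and then simply cites the avoidance enumeration from Hilmarsson \emph{et al.} The only nit is the citation: the relevant result for this pattern is Prop.\ 29 of that paper, not Prop.\ 28 (which covers Nr.\ 21), though your parenthetical ``or its analogue for this shading'' covers this.
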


\begin{proof} By the same reasons as in the proof of Theorem~\ref{triv-thm-21} we conclude that $p$ cannot occur more than once. The theorem now follows from the avoidance result in \cite[Prop. 29]{Hilmarsson2015Wilf}. \end{proof}

\section{The generating functions method}
In this section, we use the approach similar to, but in several cases (much) more involved than, the proof of Theorem~\ref{thm-length-1} to find the distribution and, whenever appropriate, avoidance for the following 12 patterns: 
\begin{center}
\begin{tabular}{rrrr}
Nr.\ 16 = $\pattern{scale = 0.6}{2}{1/1,2/2}{0/1,2/0,1/0,0/2}$; &
Nr.\ 17 = $\pattern{scale = 0.6}{2}{1/1,2/2}{0/1,1/2,0/0,2/0,1/0,0/2,2/1}$; &
Nr.\ 27 = $\pattern{scale=0.6}{2}{1/1,2/2}{0/1,2/0,2/2,1/0,1/1,0/2}$; &
Nr.\ 28 = $\pattern{scale=0.6}{2}{1/1,2/2}{0/1,1/2,0/0,2/2,1/0,2/1}$;\\[5pt]
Nr.\ 30 = $\pattern{scale=0.6}{2}{1/1,2/2}{0/1,1/2,2/0,1/0,1/1,2/1,0/2}$; &
Nr.\ 33 = $\pattern{scale=0.6}{2}{1/1,2/2}{0/1,1/2,2/0,1/0,0/2,2/1}$;  & 
Nr.\ 34 = $\pattern{scale=0.6}{2}{1/1,2/2}{0/1,1/2,0/0,2/2,1/0,1/1,2/1}$;  &
Nr.\ 55 = $\pattern{scale = 0.6}{2}{1/1,2/2}{0/1,1/2,0/0,2/0,1/1,2/1}$;\\[5pt]
Nr.\ 56 = $\pattern{scale = 0.6}{2}{1/1,2/2}{0/1,1/2,0/0,2/2,1/1,2/1}$;  &
Nr.\ 63 = $\pattern{scale = 0.6}{2}{1/1,2/2}{0/1,1/2,0/0,2/1,2/0}$;  & 
Nr.\ 64 = $\pattern{scale = 0.6}{2}{1/1,2/2}{0/1,1/2,2/0,0/2,1/1}$;  & 
Nr.\ 65 = $\pattern{scale = 0.6}{2}{1/1,2/2}{0/1,1/0,0/0,1/1,2/2}$.
\end{tabular}
\end{center}

\subsection{Distribution of the pattern Nr.\ 16}  
Our next theorem establishes the avoidance and  distribution of the pattern Nr.\ 16 = $\pattern{scale = 0.6}{2}{1/1,2/2}{0/1,2/0,1/0,0/2}$.

\begin{thm}\label{thm-pat-16}
 Let $p=\pattern{scale = 0.4}{2}{1/1,2/2}{0/1,2/0,1/0,0/2}$, $F(x,q)=\sum_{n\geq 0}x^n\sum_{\pi\in S_n}q^{p(\pi)}$, and $A(x)$ be the g.f. for $S(p)$. 
 Then, 
 $$A(x)=\frac{(1+x)F(x)}{1+xF(x)};\qquad  F(x,q)=\sum_{i\geq 0}q^{i\choose 2}x^i\prod_{j=0}^{i}\frac{F(q^jx)}{1+xq^jF(q^jx)}.$$
\end{thm}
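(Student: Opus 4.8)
The plan is to translate an occurrence of $p$ into a statement about strong fixed points, and then to set up a functional equation by peeling off the leftmost strong fixed point, exactly in the spirit of the proof of Theorem~\ref{thm-length-1}.

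First I would read off the four shaded boxes. Let $\pi_a<\pi_b$ (with $a<b$) be an occurrence of $p$. The two left boxes $\boks{0}{1}$ and $\boks{0}{2}$ force every element to the left of $\pi_a$ to be smaller than $\pi_a$, while the boxes $\boks{1}{0}$ and $\boks{2}{0}$ forbid any element smaller than $\pi_a$ from lying to the right of $\pi_a$. Hence all $\pi_a-1$ values below $\pi_a$ occupy the positions to the left of $\pi_a$, so $\pi_a$ sits in position $\pi_a$ with all smaller values before it and all larger values after it; that is, $\pi_a$ is a \emph{strong fixed point}. Conversely, if $v$ is a strong fixed point of an $n$-permutation $\pi$, then each of the $n-v$ elements to its right is larger than $v$ and, together with $v$, satisfies every shading condition (the boxes $\boks{1}{0}$ and $\boks{2}{0}$ are automatically empty since no value below $v$ occurs after position $v$, and the left boxes are empty since the prefix consists of the values $1,\dots,v-1$). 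Thus $v$ is the lower element of exactly $n-v$ occurrences, and
\[
p(\pi)=\sum_{v\ \mathrm{strong\ fixed\ point\ of}\ \pi}(n-v).
\]

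Next I would peel off the leftmost strong fixed point. Writing $\alpha(x)=\frac{F(x)}{1+xF(x)}$ for the generating function of strong-fixed-point-avoiding permutations (Theorem~\ref{thm-length-1}), a permutation either avoids strong fixed points, contributing $\alpha(x)$, or has a leftmost strong fixed point $v_1$, in which case $\pi=\pi'\,v_1\,\pi''$, where $\pi'$ (on the values below $v_1$) avoids strong fixed points and $\pi''$ (on the values above $v_1$) is arbitrary of length $m=n-v_1$. The crucial observation is that $p$ localizes: the point $v_1$ contributes $n-v_1=m$, while every other strong fixed point of $\pi$ lies in $\pi''$ and contributes to $p(\pi)$ exactly what it contributes to $p(\pi'')$, because the number of elements to its right is unchanged by deleting the prefix $\pi'\,v_1$. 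Hence $p(\pi)=m+p(\pi'')$, and tracking the weights $x$ and $q$ gives the functional equation
\[
F(x,q)=\alpha(x)+x\,\alpha(x)\,F(xq,q),
\]
where the substitution $x\mapsto xq$ simultaneously records the $x^{m}$ from $\pi''$ and the extra $q^{m}$ from $v_1$.

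I would then solve by iteration. Substituting $F(xq^{k},q)=\alpha(xq^{k})+xq^{k}\alpha(xq^{k})F(xq^{k+1},q)$ repeatedly, each level $k$ multiplies in a factor $xq^{k}\alpha(xq^{k})$, so the $i$-th collected term equals $x^{i}q^{\binom{i}{2}}\prod_{j=0}^{i}\alpha(xq^{j})$, the exponent of $q$ being $0+1+\cdots+(i-1)=\binom{i}{2}$; this is a legitimate formal power series since the $i$-th summand is divisible by $x^{i}$. Substituting $\alpha(xq^{j})=\frac{F(q^{j}x)}{1+xq^{j}F(q^{j}x)}$ gives the stated product. Finally, setting $q=0$ and using $F(0,0)=1$ in the functional equation yields $A(x)=F(x,0)=(1+x)\alpha(x)=\frac{(1+x)F(x)}{1+xF(x)}$.

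The step I expect to be the main obstacle is the localization claim, namely that the statistic $p$ restricted to the suffix $\pi''$ is literally $p(\pi'')$ — the same mesh pattern counted with the same $n-v$ weighting — for it is exactly this self-similarity that converts the recursive decomposition into the clean shift $x\mapsto xq$ and thereby produces the $q^{\binom{i}{2}}$ factors; getting the bookkeeping of positions, values, and the two weights right there is where the care is needed.
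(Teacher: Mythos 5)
Your proof is correct and follows essentially the same route as the paper's: both identify the lower element of an occurrence as a strong fixed point, decompose by the leftmost such point to obtain the functional equation $F(x,q)=\alpha(x)+x\,\alpha(x)F(qx,q)$ with $\alpha(x)=F(x)/(1+xF(x))$, and iterate to produce the product formula. The only cosmetic difference is that you recover $A(x)$ by specializing $q=0$ in the functional equation, whereas the paper first derives and solves a separate avoidance equation and then substitutes $A(x)$ back in.
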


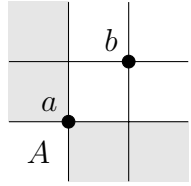
\begin{figure}
\begin{center}
	\begin{tikzpicture}[scale=0.8, baseline=(current bounding box.center)]
	\foreach \x/\y in {0/1,0/2,1/0,2/0}		    
		\fill[gray!20] (\x,\y) rectangle +(1,1);
	\draw (0.01,0.01) grid (2+0.99,2+0.99);
	\filldraw (1,1) circle (3pt) node[above left] {$a$};
	\filldraw (2,2) circle (3pt) node[above left] {$b$};
	\node  at (0.5,0.5) {$A$};
	\end{tikzpicture}
\end{center}
\caption{Related to the proof of Theorem~\ref{thm-pat-16}}\label{pic-thm-pat-16}
\end{figure}

\begin{proof} 
We claim that 
\begin{equation}\label{av-pattern-16} 
A(x) + x\big(F(x)-1\big)\frac{F(x)}{1+xF(x)}=F(x). 
\end{equation}
Indeed, each permutation $\pi$, which is counted by the right hand side in \eqref{av-pattern-16}, either avoids $p$ (which is counted by the $A(x)$ term in \eqref{av-pattern-16}), or contains at least one occurrence of $p$. Among all such occurrences, pick an occurrence $ab$ with the \emph{leftmost} possible $a$ as shown in Fig.~\ref{pic-thm-pat-16}. Referring to this figure, we note that the permutation $A$ must be \pattern{scale=0.8}{1}{1/1}{0/1,1/0}-avoiding, or else, $a$ is not the leftmost possible. By Theorem~\ref{thm-length-1}, this explains the term $\cfrac{F(x)}{1+xF(x)}$ in \eqref{av-pattern-16} enumerating such $A$'s. Further, since $ab$ is an occurrence of $p$, to the right of $a$ in $\pi$ we must have a non-empty permutation, which can be any, and such permutations are counted by $F(x)-1$. Finally, $a$ contributes the factor of $x$. This completes the proof of~\eqref{av-pattern-16} and gives the desired formula for $A(x)$:
$$A(x)=\frac{(1+x)F(x)}{1+xF(x)}.$$

For the distribution, we have the following functional equation:
\begin{equation}\label{dis-pattern-16} 
A(x) + x\big(F(qx,q)-1\big)\frac{F(x)}{1+xF(x)}=F(x,q). 
\end{equation}
The proof of \eqref{dis-pattern-16} is essentially the same as that in the avoidance case. The only term that may warrant  an explanation is $F(qx,q)$. Observe that any element to the right of $a$, together with $a$, forms an occurrence of $p$. Hence, the $x$ in $F(x,q)$, which counts permutations to the right of $a$ with respect to occurrences of $p$, should be substituted by $xq$. 
This completes the proof of \eqref{dis-pattern-16}.
By substituting the formula for $A(x)$ found in the above paragraph into \eqref{dis-pattern-16}, we get that
$$F(x,q)=\frac{F(x)}{1+xF(x)}\big(1+xF(qx,q)\big).$$
Iterating the above formula repeatedly, we obtain the desired result.
\end{proof}

\subsection{Distribution of the pattern Nr.\ 17} 
The avoidance for the pattern Nr.\ 17 = $\pattern{scale = 0.6}{2}{1/1,2/2}{0/1,1/2,0/0,2/0,1/0,0/2,2/1}$ is given by~\cite[Prop. 25]{Hilmarsson2015Wilf}. Our next theorem establishes the distribution of the pattern Nr.\ 17 = $\pattern{scale = 0.6}{2}{1/1,2/2}{0/1,1/2,0/0,2/0,1/0,0/2,2/1}$.

\begin{thm}\label{thm-pat-17}
 Let $p=\pattern{scale = 0.6}{2}{1/1,2/2}{0/1,1/2,0/0,2/0,1/0,0/2,2/1}$, $F(x,q)=\sum_{n\geq 0}x^n\sum_{\pi\in S_n}q^{p(\pi)}$, and $A(x)$ be the g.f. for $S(p)$. Then, $$F(x,q)=\left(1-x + \frac{x}{1+x(1-q)F(x)}\right)F(x).$$
\end{thm}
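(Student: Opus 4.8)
The plan is to mimic the generating-function derivation used for pattern Nr.\ 16, setting up a functional equation by classifying permutations according to the leftmost element that can serve as the ``$a$'' of an occurrence of $p$. First I would draw the pattern $p = \pattern{scale = 0.6}{2}{1/1,2/2}{0/1,1/2,0/0,2/0,1/0,0/2,2/1}$ and read off exactly which cells are shaded, so as to understand the geometric constraints an occurrence $ab$ imposes. The seven shaded boxes are very restrictive: the shading in the bottom row and left column forces $a$ to be both the leftmost and the smallest element of $\pi$, while the shading in boxes $\boks{1}{2}$ and $\boks{2}{1}$ forces $b$ to sit immediately to the right of $a$ in both position and value. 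The upshot I expect is that an occurrence of $p$ can exist only when $\pi$ begins with its smallest element $a$ and $b=a+1$ appears in the second position with nothing separating them vertically—so occurrences are essentially unique and pinned to the front of the permutation.

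Granting that analysis, I would split $S_n$ into three classes and build the equation term by term, matching the structure $F(x,q) = \big(1 - x + \tfrac{x}{1+x(1-q)F(x)}\big)F(x)$. The leading $F(x)$ on the right suggests factoring out an arbitrary tail, while the bracketed factor records the contribution of the first element. The summand $-x$ together with the $+\tfrac{x}{1+x(1-q)F(x)}$ strongly indicates that I should isolate the single element in position one and use Theorem~\ref{thm-length-1}: recall that $\tfrac{F(x)}{1+x(1-q)F(x)} = F(x,q)$ for the length-$1$ pattern $\pattern{scale=0.5}{1}{1/1}{0/1,1/0}$, so the factor $\tfrac{1}{1+x(1-q)F(x)}$ is exactly $F(x,q)/F(x)$ for strong fixed points. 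This tells me the correct decomposition is to remove the first element and track whether it is a strong fixed point, with the variable $q$ entering precisely through the strong-fixed-point statistic of the remaining suffix. Concretely, I would write a relation of the shape
\begin{equation}\label{dis-pattern-17-plan}
A(x) + x\cdot(\text{weight of the first element})\cdot G(x,q) = F(x,q),
\end{equation}
where $G(x,q)$ is the appropriate strong-fixed-point generating function supplied by Theorem~\ref{thm-length-1}, and then solve.

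The key steps, in order, are: (i) verify that $p$ occurs at most once and only when $\pi_1$ is a strong fixed point equal to the global minimum with $\pi_2=\pi_1+1$ adjacent, pinning the occurrence; (ii) translate ``the part of $\pi$ after the first element'' into the strong-fixed-point generating function, invoking the identity $F(x,q)=\tfrac{F(x)}{1+x(1-q)F(x)}$ from Theorem~\ref{thm-length-1} to supply the factor $\tfrac{1}{1+x(1-q)F(x)}$; (iii) assemble the functional equation analogous to~\eqref{dis-pattern-16}, being careful that when $\pi_1$ is \emph{not} in the configuration that creates an occurrence the tail is merely counted by $F(x)$ (contributing the $1-x$ via $F(x)-xF(x)$), whereas when it is, the $q$-weighted strong-fixed-point count appears; and (iv) simplify algebraically to reach the closed form. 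The main obstacle I anticipate is step (i)—correctly reading the dense shading to confirm that occurrences are forced to the front and are unique, and then in step (iii) bookkeeping the two cases so that the ``$a$'' contributing the factor $x$ is separated cleanly from the strong-fixed-point structure of the suffix. Once the combinatorial decomposition is pinned down, the remaining algebra is routine and should collapse directly to $\big(1-x+\tfrac{x}{1+x(1-q)F(x)}\big)F(x)$.
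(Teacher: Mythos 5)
Your overall decomposition is the right one and is in fact the paper's: split permutations according to whether they begin with the element $1$ (those that do not contribute $F(x)-xF(x)$; those that do contribute $x$ times the strong-fixed-point distribution of the suffix supplied by Theorem~\ref{thm-length-1}), which assembles directly into $F(x,q)=F(x)-xF(x)+\frac{xF(x)}{1+x(1-q)F(x)}$. However, your step (i) contains a genuine error that would derail the verification you propose to carry out. The shaded boxes $\boks{1}{2}$ and $\boks{2}{1}$ do \emph{not} force $b$ to sit immediately to the right of $a$ in position and value: since $\boks{1}{1}$ and $\boks{2}{2}$ are unshaded, elements with values between $a$ and $b$ may lie between them in position, and elements larger than $b$ may lie to the right of $b$. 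The correct reading is that an occurrence of $p$ is a pair $(1,b)$ where $1$ occupies the first position and $b$ is any element of the suffix $\pi_2\cdots\pi_n$ that is a strong fixed point of that suffix, i.e.\ an occurrence of $\pattern{scale=0.5}{1}{1/1}{0/1,1/0}$ there. In particular $p$ can occur many times --- the identity $12\cdots n$ has $n-1$ occurrences --- so your claim that ``$p$ occurs at most once'' is false. Had it been true, $F(x,q)$ would be linear in $q$, contradicting the very formula you are aiming at; you implicitly concede this when, in steps (ii)--(iii), you route $q$ through the full strong-fixed-point statistic of the suffix rather than through a single indicator.

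Once step (i) is replaced by the correct statement --- the number of occurrences of $p$ in $\pi$ is $0$ if $\pi_1\neq 1$ and otherwise equals the number of strong fixed points of the reduced suffix --- the remainder of your plan coincides with the paper's proof and the algebra closes as you describe. So the gap is localized but real: as written, the combinatorial lemma you intend to ``verify'' cannot be verified, and the uniqueness claim must be abandoned rather than patched.
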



\begin{proof} Observe that any occurrence of $p$ must start with the element 1 in the first position, and the number of occurrences is then given by the number of occurrences of the pattern \pattern{scale=0.8}{1}{1/1}{0/1,1/0} to the right of 1. Thus, each permutation either 
\begin{itemize}
\item avoids $p$, so it does not start with $1$, and such permutations are counted by $F(x)-xF(x)$, or 
\item begins with 1 corresponding to $x$, and the distribution of $p$ is given by the distribution of the pattern \pattern{scale=0.8}{1}{1/1}{0/1,1/0} in Theorem~\ref{thm-length-1}.
\end{itemize}
Hence, we obtain that 
\begin{align*}
F(x,q)=	F(x)-xF(x)  +  \frac{x F(x)}{1+x(1-q)F(x)}, 
\end{align*}
which completes the proof.
\end{proof}

\subsection{Distribution of the pattern Nr.\ 27} 
Our next theorem establishes the avoidance and  distribution of the pattern Nr.\ 27 = $\pattern{scale=0.6}{2}{1/1,2/2}{0/1,2/0,2/2,1/0,1/1,0/2}$.

\begin{thm}\label{thm-pat-27}
 Let $p=\pattern{scale=0.6}{2}{1/1,2/2}{0/1,2/0,2/2,1/0,1/1,0/2}$, $F(x,q)=\sum_{n\geq 0}x^n\sum_{\pi\in S_n}q^{p(\pi)}$, and $A(x)$ be the g.f. for $S(p)$. Then, $$A(x)=F(x)-\frac{x^2F(x)^3}{1+xF(x)};\ \ \ \ \ F(x,q)=F(x)-\frac{(1-q)x^2F^3(x)}{1+x(1-q)F(x)}.$$
\end{thm}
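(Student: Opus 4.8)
The plan is to read the pattern's shading as a rigid structural constraint, reduce each occurrence to a pair consisting of a strong fixed point and a reverse strong fixed point, localize the count to a single factor, and then apply Theorem~\ref{thm-length-1}.

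First I would analyze the shape of a single occurrence $ab$ of $p$, where $a<b$ in value and $a$ lies left of $b$. Reading off the six shaded boxes, every element left of $a$ has value below $a$ (boxes $\boks{0}{1},\boks{0}{2}$), every element between $a$ and $b$ has value above $b$ (boxes $\boks{1}{0},\boks{1}{1}$), and every element right of $b$ has value strictly between $a$ and $b$ (boxes $\boks{2}{0},\boks{2}{2}$). The first condition forces all values below $a$ to precede it, so $a$ is a \emph{strong fixed point}: writing $\pi=\Pi_1\,a\,\rho$, the factor $\Pi_1$ is a permutation of $\{1,\dots,a-1\}$ and $\rho$ a permutation of $\{a+1,\dots,n\}$. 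The remaining conditions say exactly that inside $\rho$ the element $b$ has every earlier entry larger and every later entry smaller, i.e.\ $b$ is an occurrence of $\pattern{scale=0.5}{1}{1/1}{0/0,1/1}$ (a reverse strong fixed point) of $\rho$. Conversely any such pair is an occurrence, so $p(\pi)=\sum_{a}\mathrm{rsfp}(\rho_a)$, summing over strong fixed points $a$ the number of reverse strong fixed points of the factor $\rho_a$ to the right of $a$.

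The crux, and the step I expect to be the main obstacle, is to show this sum has at most one nonzero term. I would prove the \textbf{Lemma}: \emph{if a permutation $\rho$ has a reverse strong fixed point, then $\rho$ avoids $p$.} The proof is a short case analysis by contradiction: an occurrence in $\rho$ would give a strong fixed point $a'$ with $\rho=\Pi'\,a'\,\sigma'$ and $\sigma'$ nonempty; but a reverse strong fixed point $e$ of $\rho$ cannot lie in $\Pi'$ (the larger $a'$ follows it), cannot equal $a'$ (a smaller value of $\Pi'$ precedes it, or a larger value of $\sigma'$ follows it), and cannot lie in $\sigma'$ (the smaller $a'$ precedes it). Applying the Lemma to the factor $\rho_{a_1}$ of any candidate smaller strong fixed point $a_1$ rules out a second contributor: if $a_1<a_2$ both had a reverse strong fixed point in their right factors, then $\rho_{a_1}$ would both carry a reverse strong fixed point and contain $p$ (through $a_2$), contradicting the Lemma. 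Hence whenever $\pi$ contains $p$ there is a \emph{unique} strong fixed point $a$ with $\mathrm{rsfp}(\rho_a)>0$, and $p(\pi)=\mathrm{rsfp}(\rho_a)$.

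This localization yields a unique factorization $\pi=\Pi_1\,a\,\rho$ of every permutation containing $p$, in which $\Pi_1$ is an arbitrary permutation of $\{1,\dots,a-1\}$ (contributing $F(x)$), $a$ contributes $x$, and $\rho$ is a permutation of $\{a+1,\dots,n\}$ with at least one reverse strong fixed point, weighted by $q^{\mathrm{rsfp}(\rho)}$. By the reverse symmetry recorded in Theorem~\ref{thm-length-1}, reverse strong fixed points are distributed as strong fixed points, so the $\rho$-factor contributes $\frac{F(x)}{1+x(1-q)F(x)}-\frac{F(x)}{1+xF(x)}$ (the full distribution minus the $\mathrm{rsfp}=0$ part). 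This gives the functional equation
\[
F(x,q)=A(x)+xF(x)\left(\frac{F(x)}{1+x(1-q)F(x)}-\frac{F(x)}{1+xF(x)}\right).
\]
Setting $q=1$ and using $F(x,1)=F(x)$ recovers $A(x)=F(x)-\frac{x^2F(x)^3}{1+xF(x)}$; substituting this back, the two terms over $1+xF(x)$ collapse to $-xF(x)^2$, leaving $F(x,q)=F(x)-xF(x)^2\cdot\frac{x(1-q)F(x)}{1+x(1-q)F(x)}$, which is the stated closed form.
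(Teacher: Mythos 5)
Your proof is correct, and it arrives at exactly the same functional equation as the paper, namely
\[
F(x,q)=A(x)+xF(x)\left(\frac{F(x)}{1+x(1-q)F(x)}-\frac{F(x)}{1+xF(x)}\right),
\]
since the paper's term $qx^2F(x)\frac{F(x)}{1+xF(x)}\sum_{i\geq 0}(qx)^i\bigl(\frac{F(x)}{1+xF(x)}\bigr)^{i+1}$ is precisely the geometric-series expansion of your bracket. The decomposition is the same one the paper uses (an arbitrary block below and left of $a$, then $a$, then a structured suffix), but you handle the suffix differently and, I would say, more cleanly: the paper explicitly re-derives the chain structure $b, b_1, b_2,\dots$ of occurrences of $\pattern{scale=0.5}{1}{1/1}{0/0,1/1}$ separated by $\pattern{scale=0.5}{1}{1/1}{0/0,1/1}$-avoiding blocks and sums the resulting series, whereas you observe that $p(\pi)$ equals the number of occurrences of $\pattern{scale=0.5}{1}{1/1}{0/0,1/1}$ in the suffix $\rho_a$ and import the distribution directly from Theorem~\ref{thm-length-1}. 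Your explicit uniqueness lemma (at most one strong fixed point $a$ has $\mathrm{rsfp}(\rho_a)>0$) is also a genuine addition: the paper achieves the same effect implicitly by choosing the leftmost occurrence and arguing case by case that the block $A$ may be arbitrary and that occurrences inside $C$ do not count, so your version makes rigorous a point the paper treats somewhat informally. One cosmetic quibble: your statement that ``the first condition forces all values below $a$ to precede it'' really needs all three shading conditions together (the boxes $\boks{1}{0}$ and $\boks{2}{0}$ are what exclude small values to the right of $a$), but since you list all the conditions this is a matter of phrasing, not a gap.
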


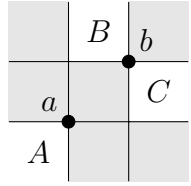
\begin{figure}
\begin{center}
	\begin{tikzpicture}[scale=0.8, baseline=(current bounding box.center)]
	\foreach \x/\y in {0/1,0/2,1/0,1/1,2/0,2/2}		    
	\fill[gray!20] (\x,\y) rectangle +(1,1);
	\draw (0.01,0.01) grid (2+0.99,2+0.99);
	\filldraw (1,1) circle (3pt) node[above left] {$a$};
	\filldraw (2,2) circle (3pt) node[above right] {$b$};
	\node  at (0.5,0.5) {$A$};
	\node  at (1.5,2.5) {$B$};
	\node  at (2.5,1.5) {$C$};
	\end{tikzpicture}
\end{center}
\caption{Related to the proof of Theorem~\ref{thm-pat-27}}\label{av-pic-pat-27}
\end{figure}

\begin{proof} We claim that 
\begin{equation}\label{av-pattern-27} A(x)+x^2\frac{F(x)}{1+xF(x)}F^2(x)=F(x).\end{equation}
Indeed, permutations avoiding $p$ are counted by $A(x)$ in \eqref{av-pattern-27}. To complete the proof of \eqref{av-pattern-27}, we need to show that permutations containing occurrences of $p$ are counted by $x^2\dfrac{F(x)}{1+xF(x)}F^2(x)$. Let $ab$ be the occurrence of $p$ in such a permutation $\pi$ with the property that both $a$ and $b$ are  the \emph{leftmost} possible; see Fig.~\ref{av-pic-pat-27}. The factor of $x^2$ is given by $ab$.

Referring to Fig.~\ref{av-pic-pat-27}, we note that $A$ can be any permutation because we will never get a contradiction with $ab$ being the leftmost occurrence of $p$ in $\pi$. Indeed, any occurrence $a'b'$ of $p$ in $A$ is not an occurrence of $p$ in $\pi$ because the elements $a$ and $b$ will be in the shaded area North-East of $b'$. Also, because of the presence of $a$, there is no occurrence $a'b'$ of $p$ which start in $A$ and end in $B$ or $C$ ($a$ would be in the shaded area between $a'$ and $b'$). Thus, $A$ contributes a factor of $F(x)$. So does $C$ since there are no restrictions for it. Finally, the only restriction for $B$ is that it must be \pattern{scale=0.8}{1}{1/1}{0/0,1/1}-avoiding because $b$ is the leftmost possible. Theorem~\ref{thm-length-1} can now be applied to justify the factor of $\cfrac{F(x)}{1+xF(x)}$ contributed by $C$. Solving \eqref{av-pattern-27} for $A(x)$ we obtain the desired result.

The formula for $F(x,q)$ follows from the following relation to be proved:
\begin{equation}\label{dis-pattern-27} 
A(x)+qx^2F(x)\frac{F(x)}{1+xF(x)}\sum_{i\geq 0}(qx)^i\left(\frac{F(x)}{1+xF(x)}\right)^{i+1}=F(x,q).
\end{equation}
We can proceed like in the case of avoidance using Fig.~\ref{av-pic-pat-27}, so that, in particular, in $qx^2F(x)\cfrac{F(x)}{1+xF(x)}$, $qx^2$ is the contribution of the occurrence $ab$ of $p$, $F(x)$ is given by $A$, and the rest is given by $B$. 

Next, we observe that an occurrence of $p$ inside $C$, if any, is not an occurrence of $p$ in $\pi$ because of the element $b$. However, each occurrence of  \pattern{scale=0.8}{1}{1/1}{0/0,1/1} in $C$, together with $a$, gives an occurrence of $p$ in $\pi$. Thus, the structure in Fig.~\ref{av-pic-pat-27} can be refined to that in Fig.~\ref{dis-pic-pat-27}, where $b_1, b_2,\ldots$ may, or may not,  exist. The index $i$ in the sum in (\ref{dis-pattern-27}) is responsible for the number of $b_j$'s. Each $b_j$ contributes $qx$, and each $B_j$, being \pattern{scale=0.8}{1}{1/1}{0/0,1/1}-avoiding, contributes $F(x)/(1+xF(x))$ in (\ref{dis-pattern-27}); note that the number of $B_j$'s is $i+1$.

We observe that the second term in (\ref{dis-pattern-27}) is
$$\frac{xF^2(x)}{1+xF(x)}\left(\sum_{i\geq 0}\left(\frac{qxF(x)}{1+xF(x)}\right)^i-1\right)=\frac{qx^2F^3(x)}{(1+xF(x))(1+x(1-q)F(x))}.$$
Substituting the formula for $A(x)$ found above into \eqref{dis-pattern-27} we get the desired expression of $F(x,q)$.
\end{proof}

\begin{figure}[!ht]
\begin{center}
	\begin{tikzpicture}[scale=0.8, baseline=(current bounding box.center)]
	\foreach \x/\y in {0/1,0/2,0/3,0/4,1/0,1/1,1/2,1/3,2/0,2/1,2/2,2/4,3/0,3/1,3/3,3/4,4/0,4/2,4/3,4/4}		    
	\fill[gray!20] (\x,\y) rectangle +(1,1);
	\draw (0.01,0.01) grid (4+0.99,4+0.99);
	\filldraw (1,1) circle (3pt) node[above left] {$a$};
	\filldraw (2,4) circle (3pt) node[above right] {$b$};
	\filldraw (3,3) circle (3pt) node[above right] {$b_1$};
	\filldraw (4,2) circle (3pt) node[above right] {$b_2$};
	\node  at (0.5,0.5) {$A$};
	\node  at (1.5,4.5) {$B$};
	\node  at (2.5,3.5) {$B_0$};
	\node  at (3.5,2.5) {$B_1$};
	\node  at (4.5,1.5) {$\ddots$};
	\end{tikzpicture}
\end{center}
\caption{Related to the proof of Theorem~\ref{thm-pat-27}}\label{dis-pic-pat-27}
\end{figure}
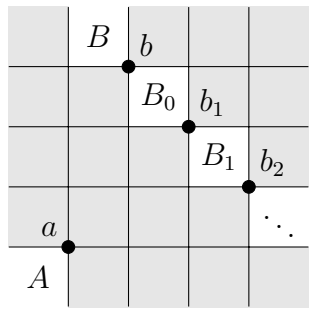

\subsection{Distribution of the pattern Nr.\ 28}
Our next theorem establishes the avoidance and  distribution of the pattern  
Nr.\ 28 = $\pattern{scale=0.6}{2}{1/1,2/2}{0/1,1/2,0/0,2/2,1/0,2/1}$.

\begin{thm}\label{thm-pat-28}
	Let $p=\pattern{scale=0.6}{2}{1/1,2/2}{0/1,1/2,0/0,2/2,1/0,2/1}$, $F(x,q)=\sum_{n\geq 0}x^n\sum_{\pi\in S_n}q^{p(\pi)}$, and $A(x)$ be the g.f. for $S(p)$. Then, 
	\begin{align*}
		A(x)=\frac{F(x)}{1+x^2F^2(x)};
		\ \ \ \ \ 
		F(x,q)=\frac{F(x)}{1+x^2(1-q)F^2(x)}.
	\end{align*}
\end{thm}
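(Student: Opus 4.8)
The plan is to read off the combinatorial meaning of the shading and then set up a self-referential functional equation, exactly in the spirit of the proofs of Theorems~\ref{thm-pat-16} and~\ref{thm-pat-27}.

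First I would decode the six shaded boxes. An occurrence of $p$ is a pair of entries $a<b$ with $a$ to the left of $b$; the two shaded boxes in the left column force every entry to the left of $a$ to exceed $b$, the two shaded boxes in the middle column force every entry between $a$ and $b$ to lie strictly between $a$ and $b$ in value, and the two shaded boxes in the right column force every entry to the right of $b$ to be smaller than $a$. Consequently a single occurrence already pins down the global shape of $\pi$: it must factor as $\pi = P\,a\,Q\,b\,R$, where $P$ consists of the $n-b$ largest values (placed first), $Q$ consists of the values strictly between $a$ and $b$, and $R$ consists of the $a-1$ smallest values (placed last). I would record this in a figure analogous to Fig.~\ref{av-pic-pat-27}.

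Next I would analyse how several occurrences of $p$ can coexist. Checking the boxes at $a$ and at $b$ shows that no occurrence can have an endpoint inside the block $Q$, that an occurrence with its smaller endpoint in $P$ must lie entirely inside $P$, and likewise for $R$; hence any two occurrences are disjoint and linearly ordered by value. Distinguishing the occurrence with the largest top value $b$ then forces $P$ to avoid $p$, leaves $Q$ completely free (it never hosts an occurrence of $p$ in $\pi$), and makes $R$ carry all the remaining occurrences, with $p(\pi)=1+p(R)$ when $R$ is viewed as a permutation. This yields a value- and size-preserving bijection between permutations containing $p$ and data consisting of a $p$-avoiding $P$, an arbitrary $Q$, an arbitrary $R$, and the two pivots $a,b$.

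Translating the bijection into generating functions finishes the proof. Ignoring the marking first gives $F(x)=A(x)+x^2A(x)F^2(x)$, whence $A(x)=F(x)/\bigl(1+x^2F^2(x)\bigr)$. Marking the distinguished occurrence by $q$ and replacing the contribution of the bottom block $R$ by $F(x,q)$---while $P$ contributes $A(x)$ and $Q$ contributes $F(x)$, neither adding marked occurrences---gives the functional equation
\[
F(x,q)=A(x)+qx^2A(x)F(x)F(x,q).
\]
Solving for $F(x,q)$ and substituting the formula for $A(x)$ yields $F(x,q)=F(x)/\bigl(1+x^2(1-q)F^2(x)\bigr)$, as claimed. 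I expect the main obstacle to be the middle step: proving rigorously that occurrences are disjoint in value and that the topmost one produces a genuine bijection with $P$ avoiding and $Q,R$ unconstrained, since this requires a careful box-by-box verification that no occurrence can straddle the three blocks and that the decomposition does not overcount.
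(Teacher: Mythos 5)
Your proposal is correct and follows essentially the same route as the paper: the same decomposition $\pi = P\,a\,Q\,b\,R$ with $P$ forced to be $p$-avoiding, $Q$ free and non-contributing, and $R$ carrying the remaining occurrences, leading to the same functional equations $F(x)=A(x)+x^2A(x)F^2(x)$ and $F(x,q)=A(x)+qx^2A(x)F(x)F(x,q)$. The only cosmetic difference is that you distinguish the occurrence with the largest top value $b$ while the paper takes the leftmost $a$; given the block structure these single out the same occurrence.
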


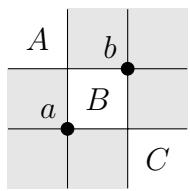
\begin{figure}[!ht]
\begin{center}
	\begin{tikzpicture}[scale=0.8, baseline=(current bounding box.center)]
	\foreach \x/\y in {0/0,0/1,1/0,1/2,2/1,2/2}		    
	\fill[gray!20] (\x,\y) rectangle +(1,1);
	\draw (0.01,0.01) grid (2+0.99,2+0.99);
	\filldraw (1,1) circle (3pt) node[above left] {$a$};
	\filldraw (2,2) circle (3pt) node[above left] {$b$};
	\node  at (0.5,2.5) {$A$};
	\node  at (1.5,1.5) {$B$};
	\node  at (2.5,0.5) {$C$};
	\end{tikzpicture}
	\caption{Related to the proof of Theorem~\ref{thm-pat-28}}\label{pic-thm-pat-28}
\end{center}
\end{figure}

\begin{proof}
We claim that 
\begin{equation}\label{av-pattern-28} 
A(x) +x^2A(x)F^2(x)=F(x). 
\end{equation}
Indeed, each permutation $\pi$, which is counted by the right hand side in \eqref{av-pattern-28}, either avoids $p$ (which is counted by the $A(x)$ term in \eqref{av-pattern-28}), or  contains at least one occurrence of $p$. Among all such occurrences, pick an occurrence $ab$ with the \emph{leftmost} possible $a$ as shown in Fig.~\ref{pic-thm-pat-28}. Referring to this figure, we note that the permutation $A$ must be $p$-avoiding, or else, $a$ is not the leftmost possible. 
Further,  $B$ and $C$ can be any permutations contributing the factor of $F^2(x)$ in \eqref{av-pattern-28}. Finally, $a$ and $b$ contribute the factor of $x^2$.  Thus,  we complete the proof of   \eqref{av-pattern-28} and  hence give the formula for $A(x)$. 

For the distribution, we have the following functional equation:
\begin{equation}\label{dis-pattern-28} 
A(x) +x^2 q A(x)F(x)F(x,q)=F(x,q). 
\end{equation}
The proof of \eqref{dis-pattern-28} is essentially the same as that in the avoidance case. 
Because of the elements $a$ and $b$, no occurrence of $p$ can be created between $a$ and $b$ and thus the block $B$ contributes $F(x)$.
On the other hand, the block $C$ contributes $F(x,q)$, since all occurrences of $p$ in $C$ are preserved in the whole permutation.
Together with the factor $x^2q$ which corresponds to the elements $a$ and $b$, all the permutations containing occurrences of $p$ are counted by $x^2 q A(x)F(x)F(x,q)$. This completes the proof of 
\eqref{dis-pattern-28} and hence we get the formula for $A(x)$.
Substituting the formula for $A(x)$ found above into \eqref{dis-pattern-28}   we obtain the desired formula for $F(x,q)$.\end{proof}

\subsection{Distribution of the pattern Nr.\ 30} 
Our next theorem establishes the avoidance and the distribution of the pattern  
Nr.\ 30 = $\pattern{scale=0.6}{2}{1/1,2/2}{0/1,1/2,2/0,1/0,1/1,2/1,0/2}$.

\begin{thm}\label{thm-pat-30}
	Let $p=\pattern{scale=0.6}{2}{1/1,2/2}{0/1,1/2,2/0,1/0,1/1,2/1,0/2}$, 
	$F(x,q)=\sum_{n\geq 0}x^n\sum_{\pi\in S_n}q^{p(\pi)}$, and $A(x)$ be the g.f. for $S(p)$. Then, 
	\begin{align*}
	A(x)=\frac{(1+x)F(x)}{1+x+x^2F(x)};
	\ \ \ \ \ 
	F(x,q)=\frac{(1+x-qx)F(x)}{1+(1-q)x+(1-q)x^2F(x)}.
	\end{align*}
\end{thm}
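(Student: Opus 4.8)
The plan is to first translate occurrences of $p$ into a transparent permutation statistic, and then to derive a functional equation by peeling off the first block of the finest direct-sum decomposition.

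First I would decode the shading of $p=\pattern{scale=0.5}{2}{1/1,2/2}{0/1,1/2,2/0,1/0,1/1,2/1,0/2}$. Writing a prospective occurrence as a pair $a<b$ with $a$ positioned to the left of $b$, the three shaded boxes in the middle column force $a$ and $b$ to be adjacent in position; the two shaded boxes in the left column force every element to the left of $a$ to be smaller than $a$; and the two shaded boxes in the right column force every element to the right of $b$ to be larger than $b$. A short counting argument (all values below $a$ must sit to its left, and all values above $b$ to its right) then shows that such $a,b$ occur only when $\pi_i=i$ and $\pi_{i+1}=i+1$ for some $i$ with $\{\pi_1,\dots,\pi_{i-1}\}=\{1,\dots,i-1\}$. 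In other words, occurrences of $p$ are exactly the pairs of consecutive positions $i,i+1$ that are \emph{both} strong fixed points (in the sense of Theorem~\ref{thm-length-1}), so $p(\pi)$ counts adjacent pairs of strong fixed points.

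Next I would use the fact that strong fixed points are precisely the singleton blocks in the finest decomposition $\pi=\alpha_1\oplus\cdots\oplus\alpha_r$ of $\pi$ into plus-indecomposable permutations (those admitting no nontrivial splitting of an initial segment of positions onto an initial segment of values), placed successively up and to the right; two consecutive strong fixed points correspond to two consecutive singleton blocks. Since every permutation is a unique ordered sequence of indecomposable blocks, the g.f.\ for indecomposables is $I(x)=1-1/F(x)$, and the g.f.\ for indecomposables of size at least $2$ is $J(x)=I(x)-x$. To track adjacent singleton blocks I would split $F(x,q)$ by the type of the first block: let $L(x,q)$ enumerate permutations whose first block is a singleton, and let $M(x,q)$ enumerate the empty permutation together with those whose first block has size at least $2$, so that $F(x,q)=L(x,q)+M(x,q)$. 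Peeling off the first block gives
\begin{align*}
M(x,q)&=1+J(x)\,F(x,q),\\
L(x,q)&=x\bigl(q\,L(x,q)+M(x,q)\bigr),
\end{align*}
where the factor $q$ appears precisely when a singleton block is immediately followed by another singleton block.

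Finally I would solve this linear system for $L$ and $M$, form $F=L+M$, substitute $J=1-1/F(x)-x$, and simplify to obtain $F(x,q)=\dfrac{(1+x-qx)F(x)}{1+(1-q)x+(1-q)x^2F(x)}$, with the stated $A(x)$ recovered by setting $q=0$. The main obstacle is the first step: reading the diagram correctly and proving rigorously that an occurrence forces the rigid ``two consecutive strong fixed points'' structure, as it is this reduction that makes the direct-sum bookkeeping available; the remaining algebra is routine and can be checked against $[x^2]F(x,q)=1+q$ and $[x^3]F(x,q)=5+q^2$.
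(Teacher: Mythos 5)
Your proposal is correct, and it takes a genuinely different route from the paper. Your decoding of the shading is right: the middle column forces adjacency, the left column forces everything left of $a$ to be below $a$, the right column forces everything right of $b$ to be above $b$, and the unshaded corners $\boks{0}{0}$ and $\boks{2}{2}$ are exactly what make the counting argument give $\pi_i=i$, $\pi_{i+1}=i+1$ with $\{\pi_1,\dots,\pi_{i-1}\}=\{1,\dots,i-1\}$ — i.e.\ an adjacent pair of strong fixed points, equivalently an adjacent pair of singleton blocks in the finest direct-sum decomposition. Your transfer-matrix-style system in $L$ and $M$ is set up correctly (the $q$ is charged exactly when a singleton block is followed by a singleton block), and solving it with $J(x)=1-1/F(x)-x$ does reproduce the stated $F(x,q)$; I verified the algebra and your coefficient checks $[x^2]=1+q$, $[x^3]=5+q^2$. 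The paper instead argues via the leftmost occurrence $ab$ of $p$: it shows the prefix below $a$ must avoid both $p$ and the length-one pattern $\pattern{scale=0.6}{1}{1/1}{0/1,1/0,1/1}$ (giving a factor $A(x)/(1+x)$), and for the distribution introduces an auxiliary series $F_1(x,q)$ for permutations beginning with $1$ to account for the extra occurrence created when the suffix starts with its minimum. Your sum-decomposition argument is arguably cleaner and more conceptual — it makes the statistic transparent and reduces everything to the standard ``sequence of indecomposables'' machinery — whereas the paper's method is the uniform leftmost-occurrence template it applies across many patterns in that section. The one step you flag as the main obstacle (the rigidity of occurrences) is in fact fully justified by the box-by-box reading you already gave, so nothing essential is missing.
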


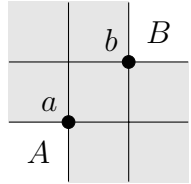
\begin{figure}[!ht]
\begin{center}
	\begin{tikzpicture}[scale=0.8, baseline=(current bounding box.center)]
	\foreach \x/\y in {0/1,0/2,1/0,1/1,1/2,2/0,2/1}		    
	\fill[gray!20] (\x,\y) rectangle +(1,1);
	\draw (0.01,0.01) grid (2+0.99,2+0.99);
	\filldraw (1,1) circle (3pt) node[above left] {$a$};
	\filldraw (2,2) circle (3pt) node[above left] {$b$};
	\node  at (0.5,0.5) {$A$};
	\node  at (2.5,2.5) {$B$};
	\end{tikzpicture}
\caption{Related to the proof of Theorem~\ref{thm-pat-30}}\label{pic-thm-pat-30}
\end{center}
\end{figure}

\begin{proof}
We claim that 
\begin{equation}\label{av-pattern-30} 
A(x) +x^2 \frac{A(x)}{1+x} F(x)=F(x). 
\end{equation}
Indeed, each permutation $\pi$, which is counted by the right hand side in \eqref{av-pattern-30}, either avoids $p$ (which is counted by the $A(x)$ term in \eqref{av-pattern-30}), or  contains at least one occurrence of $p$. Among all such occurrences, pick an occurrence $ab$ with the \emph{leftmost} possible $a$ as shown in Fig.~\ref{pic-thm-pat-30}. Referring to this figure,  the permutation $A$ must be both $p$-avoiding and  \pattern{scale=0.8}{1}{1/1}{0/1,1/0,1/1}-avoiding, or else, $a$ is not the leftmost possible. 
Note that the we must require both, because it is possible to avoid \pattern{scale=0.8}{1}{1/1}{0/1,1/0,1/1} but contain $p$, for example 1243.
Let the g.f. for  permutations in $A$ be $C(x)$.  Then the g.f. for  permutations avoiding  $p$ but containing \pattern{scale=0.8}{1}{1/1}{0/1,1/0,1/1} is $xC(x)$.
Hence, we obtain that 
$C(x)+xC(x)=A(x)$,
which leads to 
$C(x)=\frac{A(x)}{1+x}$.
Further, since $ab$ is an occurrence of $p$, to the right of $a$ in $\pi$ we must have any permutation in $B$. Finally, $a$ and $b$ contribute the factor of $x^2$.  Thus,  we complete the proof of \eqref{av-pattern-30} and  hence give the formula for $A(x)$.

We next consider the distribution. Let $F_1(x,q)$ be the g.f. for the distribution of $p$ on permutations starting with $1$. 
Then, the  g.f. for the distribution of $p$ on permutations starting with $12$ is $qxF_1(x,q)$, since $12$ gives an extra occurrence of $p$ and thus  the  g.f. for the distribution of $p$ on permutations starting with $1$ but not $12$ is $x\left(F(x,q)-F_1(x,q)\right)$.
Therefore, we have that
\begin{align*}
	F_1(x,q) = qx F_1(x,q) + x(F(x,q)-F_1(x,q)),
\end{align*}
and thus
\begin{align}\label{dis-pattern-30-F1} 
F_1(x,q) =\frac{x F(x,q)}{1-qx+x}. 
\end{align}
On the other hand,  we have the following functional equation for $F(x,q)$:
\begin{align}\label{dis-pattern-30} 
A(x) +x^2 q \frac{A(x)}{1+x} \left( qF_1(x,q)+  F(x,q) -F_1(x,q) \right)=F(x,q). 
\end{align}
The proof of \eqref{dis-pattern-30} is essentially the same as that in the avoidance case. 
The block $A$ does not contribute occurrences of $p$, explaining $\frac{A(x)}{1+x}$.
If the block $B$ begins with $1$, then this $1$, together with $b$, forms an occurrence of $p$, contributing  $qF_1(x,q)$.
If $B$ do not start with $1$, then there are no extra occurrences involving $b$ which contribute $F(x,q)-F_1(x,q)$.
Together with the factor $x^2q$ which corresponds to the elements $a$ and $b$, all the permutations containing occurrences of $p$ are counted by $x^2 q \frac{A(x)}{1+x} \left( qF_1(x,q)+  F(x,q) -F_1(x,q) \right)$. 
This completes the proof of \eqref{dis-pattern-30}.
Therefore, we get the formula for $A(x)$.
Substituting \eqref{dis-pattern-30-F1} and the formula for $A(x)$ found in the previous paragraph into \eqref{dis-pattern-30}, we obtain the desired formula for $F(x,q)$. 
\end{proof}

\subsection{Distribution of the pattern Nr.\ 33} 
Our next theorem establishes the avoidance and the distribution of the pattern  
Nr.\ 33 = $\pattern{scale=0.6}{2}{1/1,2/2}{0/1,1/2,2/0,1/0,0/2,2/1}$.

\begin{thm}\label{thm-pat-33}
	Let $p=\pattern{scale=0.6}{2}{1/1,2/2}{0/1,1/2,2/0,1/0,0/2,2/1}$, 
	$F(x,q)=\sum_{n\geq 0}x^n\sum_{\pi\in S_n}q^{p(\pi)}$, and $A(x)$ be the g.f. for $S(p)$. Then, 
	\begin{align*}
	A(x)=\frac{(1+2xF(x))F(x)}{(1+xF(x))^2};
	\ \ \ \ \ 
	F(x,q)=\sum_{i=0}^{\infty}q^{\binom{i}{2}} x^i 
	\left(\frac{F(x)}{1+xF(x)} \right)^{i+1}.
	\end{align*}
\end{thm}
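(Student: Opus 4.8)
The plan is to show that occurrences of $p$ are in bijection with \emph{unordered pairs} of strong fixed points, so that $p(\pi)=\binom{s}{2}$, where $s=s(\pi)$ is the number of strong fixed points of $\pi$. Once this is established, the formula drops out of the sum-decomposition of a permutation at its strong fixed points, together with Theorem~\ref{thm-length-1}.

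First I would read off the shading. Writing an occurrence as $ab$ with $a=(p_a,v_a)$ the lower-left dot and $b=(p_b,v_b)$ the upper-right dot, the six shaded boxes say exactly that: (i) every element to the left of $a$ has value below $v_a$; (ii) every element strictly between $a$ and $b$ has value in $(v_a,v_b)$; and (iii) every element to the right of $b$ has value above $v_b$. I would then argue that (i)--(iii) force $a$ and $b$ to \emph{each} be a strong fixed point. Indeed, (i) together with (ii) shows that all values occurring before $b$ lie below $v_b$, while no value above $v_b$ can sit at a position $\le p_b$; counting values versus positions pins down $p_b=v_b$ with all smaller values before $b$ and all larger values after it, i.e.\ $b$ is a strong fixed point, and a symmetric argument using (i)--(iii) gives $p_a=v_a$ so that $a$ is a strong fixed point. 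Conversely, any two strong fixed points automatically satisfy (i)--(iii). Hence the occurrences of $p$ are precisely the pairs of strong fixed points of $\pi$, and $p(\pi)=\binom{s(\pi)}{2}$.

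Next I would decompose a permutation according to its strong fixed points. If $\pi$ has strong fixed points at positions $p_1<\cdots<p_i$, then the segment before $p_1$, each segment strictly between consecutive $p_j$'s, and the segment after $p_i$ form blocks $\beta_0,\dots,\beta_i$ occupying consecutive intervals of positions and of values, and each $\beta_j$ contains no strong fixed point of its own (otherwise it would be a strong fixed point of $\pi$). This yields a bijection between $n$-permutations with exactly $i$ strong fixed points and $(i+1)$-tuples of (possibly empty) strong-fixed-point-avoiding permutations together with the $i$ inserted fixed points. By Theorem~\ref{thm-length-1} each block is counted by $G(x):=F(x)/(1+xF(x))$, the $i$ fixed points contribute $x^i$, and every such permutation has exactly $\binom{i}{2}$ occurrences of $p$. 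Summing over $i$ with weight $q^{\binom{i}{2}}$ gives $F(x,q)=\sum_{i\ge 0}q^{\binom{i}{2}}x^i\,G(x)^{i+1}$, the claimed expression. Setting $q=0$ kills all terms except $i\in\{0,1\}$, so $A(x)=G(x)+xG(x)^2=G(x)\big(1+xG(x)\big)$, and substituting $G(x)=F(x)/(1+xF(x))$ simplifies this to $\tfrac{(1+2xF(x))F(x)}{(1+xF(x))^2}$.

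The main obstacle I expect is the first step: extracting rigorously from the shading that $a$ and $b$ are \emph{both} strong fixed points, in particular deducing the position/value equalities $p_a=v_a$ and $p_b=v_b$ from conditions (i)--(iii) rather than only the weaker ``left-to-right maximum'' / ``right-to-left minimum'' statements. After that characterization, counting the occurrences as $\binom{s}{2}$ and setting up the block decomposition are routine, and the generating-function manipulation is immediate.
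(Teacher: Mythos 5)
Your proposal is correct and follows essentially the same route as the paper: identify occurrences of $p$ with pairs of strong fixed points, decompose the permutation at its strong fixed points into strong-fixed-point-avoiding blocks counted by $F(x)/(1+xF(x))$ via Theorem~\ref{thm-length-1}, and sum over the number of such points with weight $q^{\binom{i}{2}}x^i$. Your verification that the shading forces $p_a=v_a$ and $p_b=v_b$ is in fact more detailed than the paper's one-line assertion, and the extraction of $A(x)$ from the terms $i\in\{0,1\}$ matches the paper exactly.
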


\begin{figure}[!ht]
\begin{center}
	\begin{tikzpicture}[scale=0.8, baseline=(current bounding box.center)]
	\foreach \x/\y in {0/1,0/2,0/3,1/0,1/2,1/3,2/0,2/1,2/3,3/0,3/1,3/2}		 	\fill[gray!20] (\x,\y) rectangle +(1,1);
	\draw (0.01,0.01) grid (3+0.99,3+0.99);
	 \filldraw (1,1) circle (2pt) node[above left]{$a_1$};
	 \filldraw (2,2) circle (2pt) node[above left]{$a_2$};
 	 \filldraw (3,3) circle (2pt) node[above left]{$a_i$};
 	 \node  at (0.5,0.5) {$A_0$};
 	 \node  at (1.5,1.5) {$A_1$};
 	 \node  at (2.5,2.6) {$\iddots$};
 	 \node  at (3.5,3.5) {$A_i$};
	\end{tikzpicture}
	\caption{Related to the proof of Theorem~\ref{thm-pat-33}}\label{pic-thm-pat-33}
\end{center}
\end{figure}
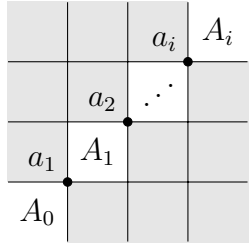

\begin{proof}
For any permutation, consider its decomposition given by the occurrences of \pattern{scale=0.8}{1}{1/1}{0/1,1/0}. 
Note that the occurrences of $p$ are built only from the occurrences. 
As shown in Picture~\ref{pic-thm-pat-33}, suppose that there are $i$ ``isolated'' points in the  permutation, namely $a_1, a_2, \ldots, a_i$ and $i+1$ blocks, namely $A_0, A_1, \ldots, A_i$. Every pair of ``isolated'' points gives one occurrence of the pattern $p$ and each block should be \pattern{scale=0.5}{1}{1/1}{0/1,1/0}-avoiding, whose g.f. is given by Theorem~\ref{thm-length-1}.
Therefore,  the distribution is now given right away by 
$$F(x,q) = \sum_{i=0}^{\infty}q^{\binom{i}{2}} x^i 
\left(\frac{F(x)}{1+xF(x)} \right)^{i+1}.$$
In particular, $p$-avoiding permutations are given by the sum of the coefficients of $q^0$ and $q^1$:
$$A(x)=\frac{F(x)}{1+xF(x)} + x \left(\frac{F(x)}{1+xF(x)} \right)^2
=\frac{(1+2xF(x))F(x)}{(1+xF(x))^2}.$$
This completes the proof.
\end{proof}

\subsection{Distribution of the pattern Nr.\ 34} 
Our next theorem establishes the avoidance and the distribution of the pattern  
Nr.\ 34 = $\pattern{scale=0.6}{2}{1/1,2/2}{0/1,1/2,0/0,2/2,1/0,1/1,2/1}$.

\begin{thm}\label{thm-pat-34}
	Let $p=\pattern{scale=0.6}{2}{1/1,2/2}{0/1,1/2,0/0,2/2,1/0,1/1,2/1}$, 
	$F(x,q)=\sum_{n\geq 0}x^n\sum_{\pi\in S_n}q^{p(\pi)}$, and $A(x)$ be the g.f. for $S(p)$. Then, 
	\begin{align*}
	A(x)= \frac{F(x)}{1+x^2F(x)};
	\ \ \ \ \ 
	F(x,q)=\frac{F(x)}{1+(1-q)x^2F(x)}.
	\end{align*}
\end{thm}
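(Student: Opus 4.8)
The plan is to follow the same template as the proofs of Theorems~\ref{thm-pat-28} and~\ref{thm-pat-30}: first read off the shape of an occurrence from the shaded boxes, then set up a functional equation by extracting a canonical (leftmost) occurrence, and finally solve. Reading off the geometry, the only unshaded boxes of $p$ are $(0,2)$ and $(2,0)$. Hence an occurrence $ab$ with $a<b$ must have $a$ and $b$ in \emph{adjacent} positions (boxes $(1,0),(1,1),(1,2)$ all shaded forbid anything positionally between them), with $b=a+1$ (boxes $(0,1)$ and $(2,1)$ shaded forbid any value strictly between $a$ and $b$), with every element to the left of $a$ exceeding $b$ (boxes $(0,0),(0,1)$ shaded), and every element to the right of $b$ lying below $a$ (boxes $(2,1),(2,2)$ shaded). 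This is exactly the geometry of pattern Nr.\ 28 (cf.\ Fig.~\ref{pic-thm-pat-28}) except that the additional shaded box $(1,1)$ collapses the middle block $B$, forcing $a$ and $b$ to be positionally adjacent.

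For avoidance I would decompose a permutation containing $p$ by its leftmost occurrence $ab$: the block $A$ of large values to the left of $a$ must be $p$-avoiding (otherwise a leftmost occurrence would sit inside it, and I would check that such an inner occurrence genuinely survives in the full permutation), contributing $A(x)$; the block $C$ of small values to the right of $b$ is arbitrary, contributing $F(x)$; and $a,b$ contribute $x^2$. This yields the relation $A(x)+x^2A(x)F(x)=F(x)$, from which $A(x)=F(x)/(1+x^2F(x))$ follows. For the distribution I would refine the same decomposition to track $q$: the pair $ab$ is exactly one occurrence of $p$, contributing $x^2q$; the left block still avoids $p$, contributing $A(x)$ with no $q$; and every occurrence inside the right block $C$ remains an occurrence in the whole permutation (the larger elements $a,b$ and all of $A$ sit to its left and cannot interfere), so $C$ contributes $F(x,q)$. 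This gives $A(x)+x^2qA(x)F(x,q)=F(x,q)$, and substituting the formula for $A(x)$ and solving for $F(x,q)$ yields $F(x,q)=F(x)/(1+(1-q)x^2F(x))$, as claimed.

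The main obstacle is purely the bookkeeping around the chosen occurrence, not the algebra. The two delicate points are: (a) confirming that no occurrence of $p$ straddles the boundary between the left block $A$ and the pair $ab$, so that requiring $A$ to avoid $p$ captures precisely the permutations with leftmost occurrence $ab$; and (b) confirming that $a$ and $b$ together participate in exactly the single occurrence $ab$ and create no further occurrences with elements of $C$, while every occurrence internal to $C$ is preserved unchanged, so that the right block legitimately carries all remaining $q$-weight via $F(x,q)$. Both checks are the same ones used for Nr.\ 28 and are in fact simpler here because there is no middle block to analyze.
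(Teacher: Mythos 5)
Your proposal is correct and follows essentially the same route as the paper: the same leftmost-occurrence decomposition into a $p$-avoiding block of larger values on the left and an arbitrary block of smaller values on the right, yielding $A(x)+x^2A(x)F(x)=F(x)$ and $A(x)+x^2qA(x)F(x,q)=F(x,q)$. The two verification points you flag are exactly the ones the paper relies on (and glosses over more quickly), so nothing further is needed.
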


\begin{figure}[!ht]
\begin{center}
	\begin{tikzpicture}[scale=0.8, baseline=(current bounding box.center)]
	\foreach \x/\y in {0/0,0/1,1/0,1/1,1/2,2/1,2/2}		    
	\fill[gray!20] (\x,\y) rectangle +(1,1);
	\draw (0.01,0.01) grid (2+0.99,2+0.99);
	\filldraw (1,1) circle (3pt) node[above left] {$a$};
	\filldraw (2,2) circle (3pt) node[above left] {$b$};
	\node  at (0.5,2.5) {$A$};
	\node  at (2.5,0.5) {$B$};
	\end{tikzpicture}
		\caption{Related to the proof of Theorem~\ref{thm-pat-34}}\label{pic-thm-pat-34}
\end{center}
\end{figure}

\begin{proof}
We claim that 
\begin{equation}\label{av-pattern-34} 
A(x) + x^2A(x)F(x)=F(x). 
\end{equation}
Indeed, each permutation $\pi$, which is counted by the right hand side in \eqref{av-pattern-34}, either avoids $p$ (which is counted by the $A(x)$ term in \eqref{av-pattern-34}), or  contains at least one occurrence of $p$. Among all such occurrences, pick an occurrence $ab$ with the \emph{leftmost} possible $a$ as shown in Fig.~\ref{pic-thm-pat-34}. Referring to this figure, we note that the permutation $A$ must be $p$-avoiding, or else, $a$ is not the leftmost possible. 
Further, $B$ can be any permutation giving the factor of $F(x)$ in $x^2A(x)F(x)$ in \eqref{av-pattern-34}. Finally, $a$ and $b$ contribute the factor of $x^2$.  Thus,  we complete the proof of   \eqref{av-pattern-34} leading to the formula for $A(x)$. 

For the distribution, we have the following functional equation:
\begin{equation}\label{dis-pattern-34} 
A(x) +x^2 q A(x)F(x,q)=F(x,q). 
\end{equation}
The proof of \eqref{dis-pattern-34} is essentially the same as that in the avoidance case. 
The block $B$ contributes $F(x,q)$, since all occurrences of $p$ in $B$ are preserved in the whole permutation.
Together with the factor $x^2q$ which corresponds to the elements $a$ and $b$, all the permutations containing occurrences of $p$ are counted by $x^2 q A(x)F(x,q)$. This completes the proof of 
\eqref{dis-pattern-34} and hence we get the formula for $A(x)$.
By substituting the formula for $A(x)$ found above into \eqref{dis-pattern-34}  we get the desired formula for $F(x,q)$.
\end{proof}

\subsection{Distribution of the pattern Nr.\ 55} 
Our next theorem establishes the avoidance and the distribution of the pattern  
Nr.\ 55 = $\pattern{scale = 0.6}{2}{1/1,2/2}{0/1,1/2,0/0,2/0,1/1,2/1}$.

\begin{thm}\label{thm-pat-55}
	Let $p=\pattern{scale = 0.6}{2}{1/1,2/2}{0/1,1/2,0/0,2/0,1/1,2/1}$, 
	$F(x,q)=\sum_{n\geq 0}x^n\sum_{\pi\in S_n}q^{p(\pi)}$, and $A(x)$ be the g.f. for $S(p)$. Then, 
	\begin{align*}
	A(x)= \frac{F(x)}{1+x(F(x)-1)};
	\ \ \ \ \ 
	F(x,q)= \frac{F(x)}{1+(1-q)x(F(x)-1)}.
	\end{align*}
\end{thm}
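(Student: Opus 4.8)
The plan is to imitate the generating-function argument of Theorem~\ref{thm-length-1}, but first to pin down the very rigid shape that any occurrence of $p$ must have. Since the three boxes $\boks{0}{1}$, $\boks{1}{1}$, $\boks{2}{1}$ forming the middle horizontal strip of $p$ are all shaded, no value lying strictly between the two chosen entries may appear anywhere in the permutation; hence in any occurrence $ab$ the entries $a$ and $b$ have consecutive values, say $k$ and $k+1$. The remaining shaded boxes then force every value smaller than $k$ to lie strictly between $a$ and $b$, and every value larger than $k+1$ to lie outside the interval spanned by $a$ and $b$. Equivalently, $\pi$ has an occurrence ``at $k$'' precisely when the initial segment $\{1,\dots,k+1\}$ occupies a block of consecutive positions whose left end is $k$, whose right end is $k+1$, and whose interior is an arbitrary arrangement of $\{1,\dots,k-1\}$. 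In particular the occurrences of $p$ correspond to nested initial-segment blocks.

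For avoidance I would distinguish the innermost occurrence, the one with the smallest $k$. Its interior is a permutation of $\{1,\dots,k-1\}$ which must itself avoid $p$, since any occurrence inside it would be an occurrence of $\pi$ with a strictly smaller value; this interior contributes the factor $A(x)$. Contracting the interior makes $k$ and $k+1$ adjacent, so the remainder of $\pi$ is a nonempty permutation in which the marked rise ``$k,k+1$'' appears as two neighbouring entries, and such ``skeletons'' are enumerated by $x(F(x)-1)$. Because the interior and the skeleton are supported on disjoint sets of values and $\pi$ is recovered bijectively by reinflating the interior between the two neighbouring entries, one obtains
\begin{equation*}
A(x)+xA(x)\bigl(F(x)-1\bigr)=F(x),
\end{equation*}
which is exactly the strong-fixed-point identity of Theorem~\ref{thm-length-1} with $F(x)$ replaced by $F(x)-1$; solving it gives the stated $A(x)$ and explains the shape of the answer.

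For the distribution I would run the same decomposition while tracking the statistic. The key observation is that contracting the interior is occurrence-preserving: the occurrences of $\pi$ are the innermost one together with the occurrences of its skeleton, so $\pi$ and its skeleton carry the same number of occurrences of $p$. This reduces everything to the $q$-enumerator of skeletons, which I would compute by inserting the adjacent marked pair into an arbitrary permutation and counting, via the nested-block description above, how many occurrences survive the insertion; assembling the factors then yields a functional equation for $F(x,q)$ whose solution is the claimed formula. The main obstacle is precisely this last step: in contrast to the plain count, the number of surviving occurrences depends on where the inserted pair lands relative to the nested occurrence blocks, so the skeleton can no longer be treated as a free factor and one must sum a geometric contribution across the layers of the nesting. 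Checking the solution at $q=0$ and $q=1$ against $A(x)$ and $F(x)$ provides a convenient guard against algebraic slips.
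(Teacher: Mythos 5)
Your structural analysis of the pattern is correct and matches the paper's: every occurrence consists of consecutive values $k,k+1$ bracketing exactly the values $1,\dots,k-1$, so occurrences form nested initial-segment blocks. Your avoidance argument (innermost occurrence, $p$-avoiding interior contributing $A(x)$, skeleton with the two smallest values adjacent contributing $x(F(x)-1)$) is exactly the paper's derivation of \eqref{av-pattern-55}, and your reduction of the distribution to $F(x,q)=A(x)\bigl(1+B(x,q)\bigr)$, where $B(x,q)$ is the $q$-enumerator of skeletons, is precisely the paper's \eqref{dis-pattern-55}, justified by the same occurrence-preserving contraction.

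The gap is that you never compute $B(x,q)$, and this is the entire substance of the distribution result. The route you sketch --- delete the adjacent pair $1,2$ from a skeleton to get an arbitrary permutation $\tau$ together with an insertion slot, then count how many occurrences of $\tau$ survive --- does not close in terms of $F(x,q)$ alone: a slot strictly inside the $j$-th nested block of $\tau$ (but not the $(j-1)$-st) preserves exactly the outermost $r-j+1$ occurrences, so the sum over slots is $(|\tau|+1-k'_r)+\sum_{j}(k'_j-k'_{j-1})q^{r-j+1}$, which depends on the individual block sizes $k'_1<\dots<k'_r$ and not merely on the number $r$ of occurrences. You would need a multivariate refinement of $F(x,q)$ tracking the outermost block size, which you neither introduce nor solve; calling this ``a geometric contribution across the layers'' names the difficulty without resolving it. The paper sidesteps this entirely by deleting only the element $1$ from a skeleton: the result is again a skeleton exactly when $1,2,3$ were consecutive (occurrence count unchanged, giving $xB(x,q)$) and otherwise is a non-skeleton with one fewer occurrence (giving $qx\bigl(F(x,q)-1-B(x,q)\bigr)$), whence $B(x,q)=qx\bigl(F(x,q)-1\bigr)/(1-x+qx)$ and the stated formula follows. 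To complete your proof you need this (or an equivalent) computation of $B(x,q)$; as written, the distribution half of the theorem is not established.
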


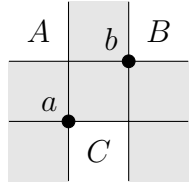
\begin{figure}[!ht]
\begin{center}
	\begin{tikzpicture}[scale=0.8, baseline=(current bounding box.center)]
	\foreach \x/\y in {0/0,0/1,1/1,1/2,2/0,2/1}		    
	\fill[gray!20] (\x,\y) rectangle +(1,1);
	\draw (0.01,0.01) grid (2+0.99,2+0.99);
	\filldraw (1,1) circle (3pt) node[above left] {$a$};
	\filldraw (2,2) circle (3pt) node[above left] {$b$};
	\node  at (0.5,2.5) {$A$};
	\node  at (2.5,2.5) {$B$};
	\node  at (1.5,0.5) {$C$};
	\end{tikzpicture}
		\caption{Related to the proof of Theorem~\ref{thm-pat-55}}\label{pic-thm-pat-55}
\end{center}
\end{figure}

\begin{proof}
	We claim that 
	\begin{equation}\label{av-pattern-55} 
	A(x) +x (F(x)-1)A(x)=F(x). 
	\end{equation}
	Indeed, each permutation $\pi$, which is counted by the right hand side in \eqref{av-pattern-55},  either avoids $p$ (which is counted by the $A(x)$ term in \eqref{av-pattern-55}), or  contains at least one occurrence of $p$. Among all such occurrences, pick an occurrence $ab$ where $a$ is  the \emph{highest} possible  such that $C$ is $p$-avoiding  as shown in Fig.~\ref{pic-thm-pat-55}. Hence, the block $C$ contributes the factor of $A(x)$.
	Note that, in general, $C$ can contain occurrences of $p$, but we can choose $C$ which is $p$-avoiding  of largest possible size.
	Because of $b$, no occurrence of $p$ can start in $C$ and end at $B$.
  	Referring to this figure, the blocks $A$ and $B$ together with $b$ can be any non-empty permutation and thus contribute the factor of $F(x)-1$.  
	Finally, $a$ contributes the factor of $x$.  Thus,  we complete the proof of  \eqref{av-pattern-55} and  hence give the formula for $A(x)$.

	We proceed to consider the distribution. 
	Let $B(x,q)$ be the g.f. for the distribution of $p$ on permutations where the element 1 is immediately followed by the element 2 (note that 12 is an occurrence of $p$). 
	We shall obtain an expression for $B(x,q)$ by seeing what happens if we remove 1 from each permutation discussed here.
	Generally, removing the element 1 (in fact any element) is not on the safe side in the sense that we could introduce one more occurrence of $p$.  However, in our case the element 2 is next to the element 1, so they have the same properties with respect to the other elements, and removing 1 is safe.
	If 2  is not immediately followed by 3, we lose one recurrence of $p$ recorded by $q$. Hence, we get the g.f. for the distribution on these permutations are counted by $qx \big(F(x,q)-1-B(x,q) \big)$, where $x$ is given by the element 1.
	If 2 is  immediately followed by 3, then we lose one occurrence of $p$, namely 12, but we also gain one occurrence of $p$ given by 23 in the original permutation.
	Hence, the permutations in this case are counted by $xB(x,q)$.
	Therefore, we have that 
	\begin{align*}
		B(x,q) = qx \big(F(x,q)-1-B(x,q) \big) +xB(x,q),
	\end{align*}
	and thus 
	\begin{align*}
		B(x,q) = \frac{qx\big(F(x,q)-1 \big)}{1-x+qx}.
	\end{align*}

	Finally,  we have the following functional equation for $F(x,q)$:
	\begin{equation}\label{dis-pattern-55} 
		F(x,q) = A(x) + B(x,q)A(x),
	\end{equation}
	since the structure of $A$ and $B$ together with $a$ and $B$ is the same as that of permutations appearing in $B(x,q)$.
	Substituting the formulas of $A(x)$  and $B(x,q)$ found above into \eqref{dis-pattern-55}   gives the desired formula for $F(x,q)$.
\end{proof}

\subsection{Distribution of the pattern Nr.\ 56} 
Our next theorem establishes the avoidance and  distribution of the pattern  
Nr.\ 56 = $\pattern{scale = 0.6}{2}{1/1,2/2}{0/1,1/2,0/0,2/2,1/1,2/1}$, which are the same as those of Nr.\ 55.

\begin{thm}\label{thm-pat-56}
	Let $p=\pattern{scale = 0.6}{2}{1/1,2/2}{0/1,1/2,0/0,2/2,1/1,2/1}$, 
	$F(x,q)=\sum_{n\geq 0}x^n\sum_{\pi\in S_n}q^{p(\pi)}$, and $A(x)$ be the g.f. for $S(p)$. Then, 
	\begin{align*}
	A(x) = \frac{F(x)}{1-x+xF(x)};
	\ \ \ \ \ 
	F(x,q) = \frac{F(x)}{1+(1-q)x(F(x)-1)}.
	\end{align*}
\end{thm}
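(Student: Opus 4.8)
The plan is to reduce $p(\pi)$ to a statistic on the \emph{skew-sum} decomposition of $\pi$, where it factorizes cleanly. First I would read off what an occurrence $ab$ of $p$ looks like. The three shaded boxes $\boks{0}{1},\boks{1}{1},\boks{2}{1}$ fill the entire middle row, forcing that no entry has a value strictly between $a$ and $b$; hence every occurrence has $b=a+1$. The box $\boks{0}{0}$ then forbids values below $a$ to the left of $a$, while $\boks{1}{2}$ and $\boks{2}{2}$ forbid values above $b$ anywhere to the right of $a$. Since $\boks{0}{2}$ is unshaded but $\boks{1}{2},\boks{2}{2}$ are shaded, the entries preceding $a$ are \emph{exactly} $\{a+2,\dots,n\}$, while every entry after $a$ other than $b$ lies below $a$. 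In particular $a$ sits at position $n-a$, and its prefix consists precisely of the $n-a-1$ largest values.

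Second, I would translate this into skew sums. Writing $\pi=\alpha_1\ominus\alpha_2\ominus\cdots\ominus\alpha_k$ for the unique decomposition into skew-indecomposable components, the condition ``the first $m-1$ entries are the $m-1$ largest values'' holds exactly when $m-1$ is a boundary between components, i.e.\ $m$ is the starting position of some $\alpha_i$. At such a start the value $n-m$ demanded at position $m$ is precisely the \emph{second-largest} value occurring in $\alpha_i$ (its largest value is $n-m+1$). Thus $p(\pi)$ equals the number of components $\alpha_i$ whose first entry is the second-largest entry of $\alpha_i$; call these \emph{special} (they necessarily have size $\ge 2$). As the statistic is a sum of independent per-component contributions, marking special components by $q$ gives
\[
F(x,q)=\frac{1}{1-C(x)-(q-1)\,C_1(x)},
\]
where $C(x)$ is the g.f.\ for skew-indecomposable permutations and $C_1(x)$ that for the special ones; here $F(x)=1/(1-C(x))$, so $C(x)=1-1/F(x)$.

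Third --- and this is the step I expect to be the crux --- I would prove the identity $C_1(x)=x\,C(x)$. The natural bijection deletes the first entry of a special component $\alpha$ of size $s$ (which is the value $s-1$) and standardizes, producing a permutation $\beta$ of size $s-1$; the inverse re-labels the maximum of $\beta$ up by one and prepends $s-1$. The content is to check that skew-indecomposability is preserved in both directions: a split of $\alpha$ at position $j\ge 2$ corresponds exactly to a split of $\beta$ at position $j-1$, while a split at $j=1$ is impossible because the prepended first entry $s-1$ is not the maximum. Granting $C_1=xC=x\bigl(1-1/F\bigr)=x(F-1)/F$, so that $C_1 F = x(F-1)$, substitution yields
\[
F(x,q)=\frac{F(x)}{1+(1-q)\,x\,(F(x)-1)},
\]
and setting $q=0$ recovers $A(x)=\dfrac{F(x)}{1-x+xF(x)}$, matching Theorem~\ref{thm-pat-55}.

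As an alternative in the style of the rest of this section, one could instead mirror the functional-equation argument of Theorem~\ref{thm-pat-55}, isolating a canonical occurrence via a figure with an upper-left block (values $>b$, left of $a$) and a lower-right block (values $<a$, right of $b$) and introducing an auxiliary series for permutations in which the relevant extremal entries are adjacent. The difficulty there is the bookkeeping of several coexisting occurrences of $p$, which the skew-sum viewpoint sidesteps by making them independent across components.
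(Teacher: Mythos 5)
Your argument is correct, and it takes a genuinely different route from the paper. The paper's proof stays within the functional-equation framework of this section: it isolates the leftmost occurrence $ab$ to get the avoidance equation, then introduces an auxiliary series $B(x,q)$ for permutations in which the maximum immediately precedes the second maximum, derives a relation for $B(x,q)$ by deleting the maximum, and finally assembles $F(x,q)=A(x)+xB(x,q)A(x)+xq\big(F(x,q)-1-B(x,q)\big)$ -- an equation whose justification (the case analysis on whether $b$ is preceded by $b-1$) is the delicate part. You instead read off the full structural characterization of an occurrence (necessarily $b=a+1$, with the prefix before $a$ consisting exactly of the values above $b$), observe that occurrences are in bijection with the \emph{special} skew-indecomposable components of the canonical decomposition $\pi=\alpha_1\ominus\cdots\ominus\alpha_k$, and then the whole distribution factorizes via $F=1/(1-C)$; the only remaining content is the identity $C_1(x)=xC(x)$, which your delete-the-first-entry bijection establishes, and your verification that decomposability is preserved in both directions is exactly the point that needs checking. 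I verified your characterization against the shading (in particular that box $\boks{0}{2}$ being unshaded is what permits a nonempty prefix) and the resulting formula against $S_2$ and $S_3$; both agree. Your approach is arguably cleaner and more conceptual: it makes the occurrences independent across components (so no bookkeeping of coexisting occurrences), and it transparently explains why Nr.~56 is equidistributed with Nr.~55 -- both distributions reduce to marking the quantity $x(F(x)-1)$. What the paper's route buys is uniformity with the surrounding theorems and no need to identify the right canonical decomposition in advance.
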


\begin{figure}[!ht]
\begin{center}
	\begin{tikzpicture}[scale=0.8, baseline=(current bounding box.center)]
	\foreach \x/\y in {0/1,1/2,0/0,2/2,1/1,2/1}
	\fill[gray!20] (\x,\y) rectangle +(1,1);
	\draw (0.01,0.01) grid (2+0.99,2+0.99);
	\filldraw (1,1) circle (3pt) node[above left] {$a$};
	\filldraw (2,2) circle (3pt) node[above left] {$b$};
	\node  at (0.5,2.5) {$A$};
	\node  at (1.5,0.5) {$B$};
	\node  at (2.5,0.5) {$C$};
	\end{tikzpicture}
		\caption{Related to the proof of Theorem~\ref{thm-pat-56}}\label{pic-thm-pat-56}
\end{center}
\end{figure}
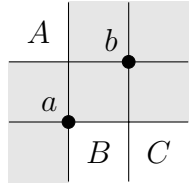

\begin{proof}
	We claim that 
\begin{equation}\label{av-pattern-56} 
A(x) +x (F(x)-1)A(x)=F(x). 
\end{equation}
Indeed, each permutation $\pi$, which is counted by the right hand side in \eqref{av-pattern-56}, either avoids $p$ (which is counted by the $A(x)$ term in \eqref{av-pattern-56}), or contains at least one occurrence of $p$.
Among all such occurrences, pick an occurrence $ab$ with the \emph{leftmost} possible $a$ as shown in Fig.~\ref{pic-thm-pat-56}. Referring to this figure,  the permutation $A$ must be $p$-avoiding, or else, $a$ is not the leftmost possible. 
Further, the blocks $B$ and $C$ together with $b$ can be any non-empty permutation and thus contribute the factor of $F(x)-1$.  
Finally, $a$ contributes the factor of $x$.  Thus,  we complete the proof of  \eqref{av-pattern-56} and  hence give the formula for $A(x)$.

We proceed to consider the distribution. 
Let $B(x,q)$ be the g.f. for the distribution of $p$ on permutations where the largest element is immediately before the second largest element (note that they form an occurrence of $p$). 
We next consider what happens if we remove the largest element from each permutation in $B(x,q)$.
If the third largest element is not  immediately before the second largest element, we lose one recurrence of $p$ recorded by $q$. Hence, we get the g.f. for the distribution on such permutations are counted by $qx \big(F(x,q)-1-B(x,q) \big)$, where $x$ is given by  the largest element.
If the third largest element is immediately before the second largest element, then we gain one occurrence of $p$ from these two elements, although one recurrence formed by the first two largest elements are lost.
Hence, the permutations discussed in this case are counted by $xB(x,q)$.
Therefore, we have that 
\begin{align*}
B(x,q) = qx \big(F(x,q)-1-B(x,q) \big) +xB(x,q),
\end{align*}
and thus, 
\begin{align*}
B(x,q) = \frac{qx\big(F(x,q)-1 \big)}{1-x+qx}.
\end{align*}

\begin{figure}[!htb]
	\centering
\begin{minipage}{0.25\textwidth}
				\centering
	\begin{tikzpicture}[scale=0.8, baseline=(current bounding box.center)]
	\foreach \x/\y in {1/1,2/1}
	\foreach \x/\y in {1/0,2/0}		    \draw (\x,\y) rectangle +(0.9,0.9);
	\filldraw (1.95,1.1) circle (2pt) node[above] {$b$};
	\node  at (1.45,0.5) {$B$};
	\node  at (2.45,0.5) {$C$};
	\end{tikzpicture}
\end{minipage}
	\begin{minipage}{0.05\textwidth}
		\centering
		$\longrightarrow$	
	\end{minipage}
\begin{minipage}{0.25\textwidth}
			\centering
	\begin{tikzpicture}[scale=0.8, baseline=(current bounding box.center)]
	\foreach \x/\y in {1/1,2/1}
	\foreach \x/\y in {1/0,2/0}		    \draw (\x,\y) rectangle +(0.9,0.9);
	\filldraw (0.95,1.05) circle (2pt) node[above] {$a$};
	\filldraw (1.95,1.25) circle (2pt) node[above] {$b$};
	\node  at (1.45,0.5) {$B$};
	\node  at (2.45,0.5) {$C$};
	\end{tikzpicture}
\end{minipage}
\begin{minipage}{.1\textwidth}
	\centering
	$\longrightarrow$	
\end{minipage}
\begin{minipage}{0.25\textwidth}
				\centering
	\begin{tikzpicture}[scale=0.8, baseline=(current bounding box.center)]
\foreach \x/\y in {1/1,2/1}
\foreach \x/\y in {1/0,2/0,0/1.5}		    \draw (\x,\y) rectangle +(0.9,0.9);
\filldraw (0.95,1.05) circle (2pt) node[above] {$a$};
\filldraw (1.95,1.25) circle (2pt) node[above] {$b$};
\node  at (0.45,2) {$A$};
\node  at (1.45,0.5) {$B$};
\node  at (2.45,0.5) {$C$};
\end{tikzpicture}
\end{minipage}
	\caption{Related to the proof of Theorem~\ref{thm-pat-56}}\label{pic-thm-pat-56-2}
\end{figure}
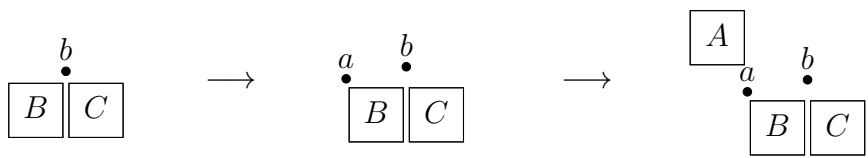

Finally, 
We  have the following functional equation for $F(x,q)$ by using similar steps in deriving $B(x,q)$:
\begin{equation}\label{dis-pattern-56} 
F(x,q) = A(x) + x B(x,q)A(x) + xq \big( F(x,q)-1-B(x,q) \big),
\end{equation}
Indeed, all the  permutations with at least one occurrences of $p$ can be generated as shown in Fig.~\ref{pic-thm-pat-56-2}. We shall consider what happens after inserting $a$ in front of $B$. If b is preceded immediately by $b-1$, then we lose one occurrence of $p$, which is $(b-1)b$, but we gain $ab$, and hence this corresponds to the second item on the right hand side,  where $x$ is given by $a$ and $A(x)$ is given by the block $A$.
If $b$ is not preceded immediately by $b-1$, we doe not lose anything but gain $ab$ as one occurrence of $p$, and hence this corresponds to the third item on the right hand side.
By substituting the formulas of $A(x)$  and $B(x,q)$ found above into \eqref{dis-pattern-56}  we  get  the desired formula for $F(x,q)$.
\end{proof}

\subsection{Distribution of the pattern Nr.\ 63} 
Our next theorem establishes the avoidance and  distribution of the pattern  
Nr.\ 63 = $\pattern{scale = 0.6}{2}{1/1,2/2}{0/1,1/2,0/0,2/1,2/0}$.

\begin{thm}\label{thm-pat-63}
	Let $p=\pattern{scale = 0.6}{2}{1/1,2/2}{0/1,1/2,0/0,2/1,2/0}$, 
	$F(x,q)=\sum_{n\geq 0}x^n\sum_{\pi\in S_n}q^{p(\pi)}$, and $A(x)$ be the g.f. for $S(p)$. Then, 
	\begin{align*}
	A(x) = \frac{2F(x)-1}{F(x)};
	\ \ \ \ \ 
	F(x,q) = \frac{(2-q)F(x)+q-1}{(1-q)F(x)+q}.
	\end{align*}
\end{thm}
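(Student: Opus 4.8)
The plan is to first convert an occurrence of $p$ into a transparent structural condition, and then run a two-stage generating-function argument organized around the direct-sum decomposition of permutations.

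First I would read off from the shading what an occurrence $ab$ requires. Writing $v$ for the value of $b$, the shaded boxes force every entry left of $a$ and every entry right of $b$ to exceed $v$, and every entry strictly between $a$ and $b$ to be smaller than $v$. Hence the values $\{1,\dots,v\}$ occupy a block of consecutive positions, with $a$ at its left end and $b=v$ at its right end; conversely any such configuration is an occurrence. So the key lemma is
\[
p(\pi)=\#\{v\ge 2 : \{1,\dots,v\}\text{ occupies consecutive positions with the value }v\text{ at the rightmost of them}\},
\]
after which everything is bookkeeping. Next I would write $\pi=\gamma_1\oplus\gamma_2\oplus\cdots\oplus\gamma_k$ as its ordered sequence of (sum-)indecomposable components. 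By the lemma, an occurrence at $v\le|\gamma_1|$ lives inside $\gamma_1$, while an occurrence at $v>|\gamma_1|$ must use all of $\gamma_1$ together with a prefix of the remaining values; since the block must be an interval of positions starting at $1$, this forces a genuine prefix of $\pi$, and such prefixes correspond exactly to the singleton components among $\gamma_2,\dots,\gamma_k$. Thus $p(\pi)=p(\gamma_1)+\#\{j\ge 2:\gamma_j\text{ is a singleton}\}$. Writing $\mathcal I(x)=1-1/F(x)$ for the g.f.\ of all indecomposables and $\mathcal I_{\ge2}=\mathcal I-x$, and noting that later singletons contribute $xq$ while larger later components contribute $q$-freely, this yields
\[
F(x,q)=1+\frac{I(x,q)}{\,1-xq-\mathcal I_{\ge2}(x)\,},\qquad I(x,q)=\sum_{\alpha\ \mathrm{indec}}x^{|\alpha|}q^{p(\alpha)}.
\]

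It then remains to compute $I(x,q)$, i.e.\ to understand occurrences inside an indecomposable $\alpha$. For such an $\alpha$ carrying an occurrence I would locate the largest block $\{1,\dots,v\}$; by indecomposability it is a proper interval of positions (never a prefix), its contraction to a single minimal point leaves an indecomposable $p$-avoider $\hat\alpha$, and the block itself is an arbitrary permutation ending in its maximum with $p(\alpha)$ equal to the number of occurrences of that block. Permutations ending in their maximum have g.f.\ $xq(F(x,q)-1)$ (append the maximum, gaining one occurrence), so inflating the minimum of $\hat\alpha$ gives $I(x,q)=x+I^{\mathrm{av}}(x)\bigl(1+q(F(x,q)-1)\bigr)$, where $I^{\mathrm{av}}$ is the g.f.\ of indecomposable avoiders of size $\ge 2$. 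I would pin down $I^{\mathrm{av}}$ from the classical identity $I(x,1)=\mathcal I(x)=1-1/F(x)$, which forces $I^{\mathrm{av}}=\mathcal I_{\ge2}/F(x)=\bigl((1-x)F(x)-1\bigr)/F(x)^2$. Substituting back and simplifying collapses the two displays to the clean relation $\tfrac{1}{F(x,q)-1}=\tfrac{1}{F(x)-1}+(1-q)$, from which the stated $F(x,q)$ follows, and $A(x)=F(x,0)=(2F(x)-1)/F(x)$.

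The hard part will be getting the two decompositions exactly right. The tempting guess that $p$ is additive over $\oplus$ with a simple junction correction is \emph{false}: internal occurrences of non-leading components do not lift, because they involve non-minimal values of $\pi$; only prefix occurrences (equivalently, later singletons) survive. Likewise the internal occurrences of an indecomposable block must be handled by the inflation-at-the-minimum bijection rather than ignored. Once these two points are correct, closing the system via the classical indecomposable g.f.\ and the final algebraic simplification are routine, and I would sanity-check the whole chain on $S_2,S_3,S_4$, where $\sum_{\pi}q^{p(\pi)}$ must come out as $1+q$, $3+2q+q^2$, and the $S_4$ polynomial predicted by the formula.
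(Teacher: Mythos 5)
Your proof is correct, but it takes a genuinely different route from the paper's. Both arguments ultimately rest on the same structural reading of the pattern --- an occurrence at value $v$ means $\{1,\dots,v\}$ sits in consecutive positions with $v$ at the rightmost of them --- but the paper exploits this in a single step: it picks the occurrence $ab$ with the \emph{highest} $b$, observes that the elements $\geq b$ then form a nonempty $p$-avoiding permutation while the elements $<b$ form an arbitrary nonempty permutation carrying all the remaining occurrences, and reads off the one functional equation $F(x,q)=A(x)+q\bigl(A(x)-1\bigr)\bigl(F(x,q)-1\bigr)$ directly. Your two-stage argument --- the direct-sum decomposition with $p(\pi)=p(\gamma_1)+\#\{j\geq 2:\gamma_j\ \text{a singleton}\}$, followed by the contract-the-largest-block bijection on indecomposables, closed off via $\mathcal I(x)=1-1/F(x)$ --- reaches the same answer with more machinery; I checked the component formula, the inflation bijection (indecomposability and $p$-avoidance of $\hat\alpha$ do transfer in both directions, and the inserted block is never a prefix since an indecomposable of length $\geq 2$ cannot begin with its minimum), the generating-function bookkeeping, and the final algebra, and all of it holds, with the stated $F(x,q)$ indeed equivalent to $\tfrac{1}{F(x,q)-1}=\tfrac{1}{F(x)-1}+(1-q)$. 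What your route buys is a finer picture: it isolates the behaviour of the statistic on indecomposable permutations and makes transparent the kinship with pattern Nr.~64, whose occurrences count components minus one --- a connection the paper only establishes a posteriori through identical generating functions and a sketched recursive bijection. The paper's route is shorter and serves as its template for Nr.~64 and Nr.~65 as well.
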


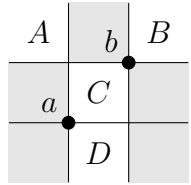
\begin{figure}[!ht]
\begin{center}
	\begin{tikzpicture}[scale=0.8, baseline=(current bounding box.center)]
	\foreach \x/\y in {0/1,1/2,0/0,2/1,2/0}
	\fill[gray!20] (\x,\y) rectangle +(1,1);
	\draw (0.01,0.01) grid (2+0.99,2+0.99);
	\filldraw (1,1) circle (3pt) node[above left] {$a$};
	\filldraw (2,2) circle (3pt) node[above left] {$b$};
	\node  at (0.5,2.5) {$A$};
	\node  at (2.5,2.5) {$B$};
	\node  at (1.5,1.5) {$C$};
	\node  at (1.5,0.5) {$D$};
	\end{tikzpicture}
		\caption{Related to the proof of Theorem~\ref{thm-pat-63}}\label{pic-thm-pat-63}
\end{center}
\end{figure}

\begin{proof}
We claim that 
\begin{equation}\label{av-pattern-63} 
A(x) + \big(A(x)-1\big) \big(F(x)-1\big)=F(x). 
\end{equation}
Indeed, each permutation $\pi$, which is counted by the right hand side in \eqref{av-pattern-63},  either avoids $p$ (which is counted by the $A(x)$ term in \eqref{av-pattern-63}), or  contains at least one occurrence of $p$. Among all such occurrences, pick the occurrence $ab$ with the \emph{highest} possible $b$ as shown in Fig.~\ref{pic-thm-pat-63}. Referring to this figure, we note that the permutation formed by $A$ and $B$ together with $b$ must be nonempty and $p$-avoiding. This explains the term $A(x)-1$ in \eqref{av-pattern-63}. Further, since $ab$ is an occurrence of $p$, in the part below $b$ which is formed by $C$ and $D$ together with $a$, we must have a non-empty permutation, which can be any, and such permutations are counted by $F(x)-1$. This completes the proof of   \eqref{av-pattern-63} and gives the formula for $A(x)$. 

For the distribution, we have the following functional equation:
\begin{equation}\label{dis-pattern-63} 
A(x) + q \big(A(x)-1\big) \big(F(x,q)-1\big)=F(x,q). 
\end{equation}
The proof of \eqref{dis-pattern-63} is essentially the same as that in the avoidance case. 
The term $q$ is given by $ab$ and the non-empty part below $b$ gives $F(x,q)-1$ since no occurrence of $p$ can start there and end at $x>b$ (or else $bx$ would be an occurrence of $p$ contradicting $b$ being the highest possible). Solving for $F(x,q)$ and substituting $A(x)$, we obtain the desired formula for $F(x,q)$. \end{proof}

\subsection{Distribution of the pattern Nr.\ 64}
Our next theorem establishes the avoidance and the distribution of the pattern  
Nr.\ 64 = $\pattern{scale = 0.6}{2}{1/1,2/2}{0/1,1/2,2/0,0/2,1/1}$. Note that the occurrences of the pattern Nr.\ 64 are related to the well known statistic  ``the number of components'', which in turn is related to the notion of an ``irreducible permutation'', or an ``indecomposable permutation''. More precisely, the number of occurrences of the pattern Nr.\ 64 equals the number of  components minus 1, which is the number of places in which a permutation can be cut so that every element to the left of a cut is less than any element to the right of the cut. This distribution is known (see the sequence A059438 in \cite{OEIS}). However,  we rederive it in the next theorem in a different form, which will allow us  to establish, in a bijective way, the equidistribution with the pattern Nr.\ 63, and the pattern Nr.\ 65 considered in Theorem~\ref{thm-pat-65}.

\begin{thm}\label{thm-pat-64}
	Let $p=\pattern{scale = 0.6}{2}{1/1,2/2}{0/1,1/2,2/0,0/2,1/1}$, 
	$F(x,q)=\sum_{n\geq 0}x^n\sum_{\pi\in S_n}q^{p(\pi)}$, and $A(x)$ be the g.f. for $S(p)$. Then, 
	\begin{align*}
	A(x)=\frac{2F(x)-1}{F(x)};
	\ \ \ \ \ 
	F(x,q)= \frac{(2-q)F(x)+q-1}{(1-q)F(x)+q}.
	\end{align*}
\end{thm}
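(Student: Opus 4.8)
The plan is to recognize that the pattern Nr.\ 64 $p=\pattern{scale=0.5}{2}{1/1,2/2}{0/1,1/2,2/0,0/2,1/1}$ counts the component-cut structure of a permutation, as the theorem statement itself indicates. An occurrence of $p$ is an ascent $ab$ (with $a$ in position $i$ and $b=a+1$ in some later position, by the shadings) such that everything strictly between and around is forced to lie below $a$ or above $b$; concretely, each occurrence corresponds to a place where the permutation splits into a direct sum, i.e.\ a ``cut'' after which all later entries exceed all earlier entries. First I would verify directly from the shaded boxes that an occurrence $ab$ of $p$ forces $b=a+1$ in value and that the positions of $a$ and $b$ are consecutive among the values $\le a$ versus $>a$, so that the number of occurrences of $p$ in $\pi$ equals (number of components of $\pi$) $-1$.

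Given that the final formulas for $A(x)$ and $F(x,q)$ are \emph{identical} to those of Theorem~\ref{thm-pat-63}, the cleanest route is to set up the same functional equations as in the proof of Theorem~\ref{thm-pat-63} and observe they lead to the same answer. Specifically, I would prove the avoidance relation
\begin{equation*}
A(x) + \big(A(x)-1\big)\big(F(x)-1\big) = F(x)
\end{equation*}
by decomposing a permutation $\pi$ containing $p$ at its \emph{first cut}: writing $\pi=\sigma\oplus\tau$ where $\sigma$ is the (nonempty, component-indecomposable, hence $p$-avoiding) first block and $\tau$ is an arbitrary nonempty permutation. The block $\sigma$ is counted by $A(x)-1$ and $\tau$ by $F(x)-1$, while $p$-avoiding permutations (those with no cut, i.e.\ indecomposable ones, together with the empty permutation) are counted by $A(x)$. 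For the distribution I would then upgrade this to
\begin{equation*}
A(x) + q\big(A(x)-1\big)\big(F(x,q)-1\big) = F(x,q),
\end{equation*}
where the factor $q$ records the single occurrence of $p$ contributed by the first cut itself, and $F(x,q)$ tracks the remaining cuts recursively inside the tail $\tau$. Solving these two equations for $A(x)$ and $F(x,q)$ respectively and substituting yields exactly the claimed expressions.

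The main conceptual step — and the one I would write out most carefully — is the justification that the leftmost-cut decomposition is both well defined and exactly captures one occurrence of $p$ per cut: I must check that the shadings of Nr.\ 64 force $a$ and $b=a+1$ to straddle a genuine direct-sum boundary (so that the first block contributes no further occurrences and is $p$-avoiding), and that every nontrivial cut arises this way. This is essentially the same bookkeeping as in Fig.~\ref{pic-thm-pat-63}, reflected appropriately, which is why the functional equations coincide. The remaining algebra (solving the linear relation for $F(x,q)$ and simplifying) is routine and identical in form to the computation already carried out for Theorem~\ref{thm-pat-63}. I would close by remarking that the equality of the resulting generating functions with those of Nr.\ 63 (and later Nr.\ 65) is precisely the equidistribution promised in the preceding discussion, to be realized bijectively in the sequel.
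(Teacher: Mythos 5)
Your proposal follows essentially the same route as the paper: the paper likewise decomposes a $p$-containing permutation at the occurrence $ab$ with leftmost $a$ (equivalently, at the first direct-sum cut), counts the nonempty $p$-avoiding first block by $A(x)-1$ and the arbitrary nonempty tail by $F(x)-1$ (resp.\ $q\bigl(F(x,q)-1\bigr)$ for the distribution), and solves the identical functional equations. One small correction to your characterization of an occurrence: the shadings do \emph{not} force $b=a+1$ in value (e.g.\ $13$ is an occurrence of $p$ in $132$, since the box east of $b$ at mid-height is unshaded); rather they force the values $\{1,\dots,a\}$ to occupy the first $a$ positions with $a$ their maximum and $b$ to be the entry immediately following that block, which still gives exactly one occurrence per cut and hence the ``components minus one'' count your argument relies on.
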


\begin{figure}[!ht]
\begin{center}
	\begin{tikzpicture}[scale=0.8, baseline=(current bounding box.center)]
	\foreach \x/\y in {0/1,1/2,2/0,0/2,1/1}
	\fill[gray!20] (\x,\y) rectangle +(1,1);
	\draw (0.01,0.01) grid (2+0.99,2+0.99);
	\filldraw (1,1) circle (3pt) node[above left] {$a$};
	\filldraw (2,2) circle (3pt) node[above left] {$b$};
	\node  at (2.5,2.5) {$C$};
	\node  at (2.5,1.5) {$D$};
	\node  at (0.5,0.5) {$A$};
	\node  at (1.5,0.5) {$B$};
	\end{tikzpicture}
		\caption{Related to the proof of Theorem~\ref{thm-pat-64}}\label{av-pic-pat-64}
\end{center}
\end{figure}
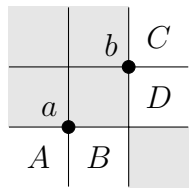

\begin{proof}

We claim that 
\begin{equation}\label{av-pattern-64} 
A(x) + \big(A(x)-1\big) \big(F(x)-1\big)=F(x). 
\end{equation}
Indeed, each permutation $\pi$, which is counted by the right hand side in \eqref{av-pattern-64}, either avoids $p$ (which is counted by the $A(x)$ term in \eqref{av-pattern-64}), or  contains at least one occurrence of $p$. Among all such occurrences, pick the occurrence $ab$ with the \emph{leftmost} possible $a$, which determines uniquely $b$, as shown in Fig.~\ref{av-pic-pat-64}. Referring to this figure, we note that the permutation formed by $a$, $A$ and $B$ must be non-empty $p$-avoidable. This explains the term $A(x)-1$ in \eqref{av-pattern-64}. On the other hand, the non-empty permutation formed by $b$, $C$ and $D$ can be any, which explains the term $F(x)-1$ completing our proof of   \eqref{av-pattern-64} and giving the formula for $A(x)$. 

For the distribution, we have the following functional equation:
\begin{equation}\label{dis-pattern-64} 
A(x) + q \big(A(x)-1\big) \big(F(x,q)-1\big)=F(x,q). 
\end{equation}
The proof of \eqref{dis-pattern-64} is essentially the same as that in the avoidance case. 
The term $q$ is given by $ab$ and the non-empty part to the right of $B$ contributes $F(x,q)-1$ because no occurrence of $p$ can start to the left of $b$ and end to the right of $b$. Solving for $F(x,q)$ and substituting $A(x)$, we obtain the desired formula for $F(x,q)$. 
\end{proof}

\begin{remark}\label{bijective-remark-1} {\em Comparing the structures in Fig.~\ref{pic-thm-pat-63} and~\ref{av-pic-pat-64}, we can explain the equidistribution of the patterns $p_1=$ Nr.\ $63$ and $p_2=$ Nr.\ $64$ bijectively, where using \eqref{simple-observ} given below and the discussion around it, we can map $p_2$-avoiding permutations to $p_1$-avoiding permutations, say, lexicographically, thus having the basis of the recursion. Skipping the details, the idea of the bijective map is to start with a permutation of the form in Fig.~\ref{av-pic-pat-64} with $k$ occurrences of $p_2$, then 
 \begin{itemize} 
 \item[a.] map recursively the permutation formed by $b,C,D$ with $(k-1)$ occurrences of $p_2$ to the permutation formed by $a, C, D$ in Fig.~\ref{pic-thm-pat-63} with $(k-1)$ occurrences of $p_1$;
 \item[b.] map the $p_2$-avoiding permutation formed by $a$, $A$ and $B$ in Fig.~\ref{av-pic-pat-64} to a $p_1$-avoiding permutation formed by $b$, $A$ and $B$  in Fig.~\ref{pic-thm-pat-63};
 \item[c.] combine the permutations obtained in a.\ and b.\ to obtain the structure in Fig.~\ref{pic-thm-pat-63} giving the desired permutation with $k$ occurrences of $p_2$.
 \end{itemize}
 }
 \end{remark}

\subsection{Distribution of the pattern Nr.\ 65} 
Our next theorem establishes the avoidance and the distribution of the pattern  
Nr.\ 65 = $\pattern{scale = 0.6}{2}{1/1,2/2}{0/1,1/0,0/0,1/1,2/2}$

\begin{thm}\label{thm-pat-65}
	Let $p=\pattern{scale = 0.6}{2}{1/1,2/2}{0/1,1/0,0/0,1/1,2/2}$, 
	$F(x,q)=\sum_{n\geq 0}x^n\sum_{\pi\in S_n}q^{p(\pi)}$, and $A(x)$ be the g.f. for $S(p)$. Then, 
	\begin{align*}
	A(x)=\frac{2F(x)-1}{F(x)};
	\ \ \ \ \ 
	F(x,q)= \frac{(2-q)F(x)+q-1}{(1-q)F(x)+q}.
	\end{align*}
\end{thm}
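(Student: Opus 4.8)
The plan is to notice that the asserted formulas for $A(x)$ and $F(x,q)$ are word for word those of Theorem~\ref{thm-pat-64} (and Theorem~\ref{thm-pat-63}). Hence it suffices to prove that the number of occurrences of $p$ is equidistributed over $S_n$ with the number of occurrences of the pattern Nr.\ 64; equivalently, to establish for $p$ the same two functional equations, namely $A(x)+(A(x)-1)(F(x)-1)=F(x)$ for avoidance and $A(x)+q(A(x)-1)(F(x,q)-1)=F(x,q)$ for the distribution. Granting these, solving the first yields $A(x)=\frac{2F(x)-1}{F(x)}$, and substituting this into the second and solving for $F(x,q)$ returns $F(x,q)=\frac{(2-q)F(x)+q-1}{(1-q)F(x)+q}$; this final manipulation is routine.

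First I would pin down the shape of an occurrence. Reading off the shaded boxes, a pair $a<b$ with $a$ to the left of $b$ is an occurrence exactly when every entry to the left of $b$ other than $a$ exceeds $b$, while every entry to the right of $b$ is smaller than $b$. Thus an occurrence is recorded precisely by a right-to-left maximum $b$ that has exactly one smaller entry to its left, that entry being $a$; and a permutation containing at least one occurrence has the block form $\pi=A\,a\,B\,b\,R$, where $A$ and $B$ consist of the entries above $b$ and $R$ of the entries below $b$ other than $a$. I would take as canonical the occurrence whose top $b$ is as large as possible.

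The crux, and the feature separating this pattern from Nr.\ 63 and Nr.\ 64, is that the decomposition does not factor as cleanly as in those theorems: the single entry $a$ is at once the unique small-left entry certifying $b$ and an entry that may serve as the lower point of further occurrences whose top lies below $b$, so the block $R$ below $b$ is \emph{not} an independent, arbitrary permutation. I would treat this coupling exactly as in the proofs of Theorems~\ref{thm-pat-55} and~\ref{thm-pat-56}: introduce an auxiliary generating function for permutations carrying a distinguished entry in the role of $a$, and derive a linear relation by deleting, respectively inserting, that entry while accounting for the single occurrence lost or gained, in the spirit of the ``lose one, gain one'' bookkeeping used there. Proving that the resulting system collapses to the two functional equations displayed above is the step I expect to be the main obstacle.

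An alternative I would keep in reserve is a fully bijective proof of the equidistribution with Nr.\ 64, built recursively in the manner of Remark~\ref{bijective-remark-1}: peel off the canonical occurrence $(a,b)$, recurse on the subpermutation carrying the remaining $k-1$ occurrences, send the forced avoiding block to an avoiding block on the Nr.\ 64 side, and reassemble, with the base case matching avoiders (both enumerated by $\frac{2F(x)-1}{F(x)}$). The one delicate point there is relocating $a$ so that the occurrence it certifies is transferred correctly. Either route concludes with the algebra recorded in the first paragraph.
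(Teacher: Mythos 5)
Your setup is right---the reading of the occurrence condition, the choice of the occurrence with the highest $b$, and the target equations $A(x)+(A(x)-1)(F(x)-1)=F(x)$ and $A(x)+q(A(x)-1)(F(x,q)-1)=F(x,q)$ are exactly what the paper establishes---but you stop short of the one idea that makes these equations true, and you explicitly defer it (``the step I expect to be the main obstacle''). The coupling through $a$ that you correctly identify as the crux is resolved in the paper not by an auxiliary generating function in the style of Theorems~\ref{thm-pat-55} and~\ref{thm-pat-56}, but by letting $a$ be counted twice. Write $\pi'$ for the subpermutation of entries positioned strictly before $b$ (your $A\,a\,B$): it is non-empty, must be $p$-avoiding for $b$ to be the highest possible, and its minimum is $a$. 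Write $\pi''$ for the subpermutation of entries with value strictly below $b$ (that is, $a$ together with your $R$): it is non-empty, completely arbitrary, and its leftmost entry is $a$. These two sets intersect exactly in $\{a\}$ and, together with $b$, exhaust $\pi$, so $|\pi|=|\pi'|+|\pi''|$; the map $(\pi',\pi'')\mapsto\pi$ obtained by identifying the minimum of $\pi'$ with the leftmost entry of $\pi''$ (which fixes both the value and the position of $a$) and then inserting $b$ in its forced position is a bijection. This yields the term $(A(x)-1)(F(x)-1)$ on the nose, with $b$ absorbing the factor of $x$ that the doubly-used $a$ would otherwise cost. For the distribution one only needs to check that the occurrences of $p$ in $\pi$ other than $ab$ are exactly the occurrences of $p$ inside $\pi''$: any such occurrence has its top element below $b$, hence positioned after $b$, and the entries of $A$, $B$ and the entry $b$ all exceed it in value and sit to its left, so they never violate the shading; this justifies the factor $q(F(x,q)-1)$.

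So the gap is concrete: neither of your two routes is actually carried out. The auxiliary-generating-function system is never written down, and it is not needed; the bijective fallback founders on precisely the point you flag (relocating $a$), which is the same double role of $a$ that the gluing above handles cleanly. Once you adopt the $\pi'$/$\pi''$ decomposition with $a$ shared, the rest of your outline---solving the two functional equations for $A(x)$ and $F(x,q)$---goes through exactly as you say.
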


\begin{figure}[!ht]
\begin{center}
	\begin{tikzpicture}[scale=0.8, baseline=(current bounding box.center)]
	\foreach \x/\y in {0/1,1/0,0/0,1/1,2/2}
	\fill[gray!20] (\x,\y) rectangle +(1,1);
	\draw (0.01,0.01) grid (2+0.99,2+0.99);
	\filldraw (1,1) circle (3pt) node[above left] {$a$};
	\filldraw (2,2) circle (3pt) node[above right] {$b$};
	\node  at (0.5,2.5) {$A$};
	\node  at (1.5,2.5) {$B$};
	\node  at (2.5,1.5) {$C$};
	\node  at (2.5,0.5) {$D$};
	\end{tikzpicture}
		\caption{Related to the proof of Theorem~\ref{thm-pat-65}}\label{pic-thm-pat-65-a}
\end{center}
\end{figure}

\begin{proof}
We claim that 
\begin{equation}\label{av-pattern-65} 
A(x) + \big(A(x)-1\big) \big(F(x)-1\big)=F(x). 
\end{equation}
Indeed, each permutation $\pi$, which is counted by the right hand side in \eqref{av-pattern-65}, either avoids $p$ (which is counted by the $A(x)$ term in \eqref{av-pattern-65}), or  contains at least one occurrence of $p$. Among all such occurrences, pick an occurrence $ab$ with the \emph{highest} possible $b$ as shown in Fig.~\ref{pic-thm-pat-65-a}. 

In order for $b$ to be the highest possible, the non-empty permutation formed by $a$, $A$ and $B$ must be $p$-avoiding, which explains the term $A(x)-1$ in  \eqref{av-pattern-65}. On the other hand, the non-empty permutation formed by $a$, $C$ and $D$ can be any, which explains  the term $F(x)-1$ in  \eqref{av-pattern-65} (the presence of $b$ leads to no problem since we have used $a$ twice). We are done with proving \eqref{av-pattern-65}, because we can construct  $\pi$, in a bijective way, from a $p$-avoiding non-empty permutation $\pi'$ and from another non-empty permutation $\pi''$ as is sketched in Fig.~\ref{pic-thm-pat-65}. Indeed,  the minimal element $a'$ in $\pi'$ and the leftmost element $a''$ in $\pi''$ will give the value and position of the element $a$ in $\pi$ as shown in Fig.~\ref{pic-thm-pat-65}; then $a'$ and $a''$ can be removed and the element $b$ can be inserted in the uniquely defined position; this procedure is reversible. 

\begin{figure}[!htb]
	\centering
	\begin{tikzpicture}[scale=1, baseline=(current bounding box.center)]
	\foreach \x/\y in {2/0,2/1}		    \draw (\x,\y) rectangle +(0.85,0.85);
	\foreach \x/\y in {0/2,1/2}		    \draw (\x,\y) rectangle +(0.85,0.85);
	\filldraw (0.9,0.9) circle (1.5pt) node[below left] {$a$};
	\filldraw (1.9,0.9) circle (1.5pt) node[below left] {$a''$};
	\filldraw (0.9,1.9) circle (1.5pt) node[below left] {$a'$};
	\filldraw (1.9,1.9) circle (1.5pt) node[above right] {$b$};
	\draw[<-,line width=0.7pt] (1.1,0.9) -- (1.7,0.9);		
	\draw[<-,line width=0.7pt] (0.9,1.1) -- (0.9,1.7);
	\node  at (0.45,2.5) {$A$};\node  at (1.45,2.5) {$B$};
	\node  at (2.45,1.5) {$C$};\node  at (2.45,0.5) {$D$};
	\end{tikzpicture}
	\caption{Related to the proof of Theorem~\ref{thm-pat-65}}\label{pic-thm-pat-65}
\end{figure}

For the distribution, we have the following functional equation:
\begin{equation}\label{dis-pattern-65} 
A(x) + q \big(A(x)-1\big) \big(F(x,q)-1\big)=F(x,q). 
\end{equation}
The proof of \eqref{dis-pattern-16} is essentially the same as that in the avoidance case. 
The term $q$ is given by $ab$; the term $A(x)-1$ is given by  the non-empty permutation formed by $a'$, $A$ and $B$, and the term $F(x,q)-1$ is given by  the non-empty permutation formed by $a''$, $C$ and $D$, since no occurrence of $p$ can begin at $b$, or above it. Solving for $F(x,q)$ and substituting $A(x)$, we obtain the desired formula for $F(x,q)$. 
 \end{proof}
 
 \begin{remark}\label{bijective-remark-2} {\em Comparing the structures in Fig.~\ref{av-pic-pat-64} and~\ref{pic-thm-pat-65-a}, we can explain the equidistribution of the patterns $p_1=$ Nr.\ $64$ and $p_2=$ Nr.\ $65$ bijectively, where using \eqref{simple-observ} given below and the discussion around it, we can map $p_1$-avoiding permutations to $p_2$-avoiding permutations, say, lexicographically thus having the basis of the recursion. Skipping the details, the idea of the bijective map is to start with a permutation of the form in Fig.~\ref{av-pic-pat-64} with $k$ occurrences of $p_1$, then 
 \begin{itemize} 
 \item[a.] map recursively the permutation formed by $b,C,D$ with $(k-1)$ occurrences of $p_1$ to the permutation formed by $a, C, D$ in Fig.~\ref{pic-thm-pat-65-a} with $(k-1)$ occurrences of $p_2$ (this map will give us the value of $a$);
 \item[b.] map the $p_1$-avoiding permutation formed by $a$, $A$ and $B$ in Fig.~\ref{av-pic-pat-64} to a $p_2$-avoiding permutation formed by the same letters in Fig.~\ref{pic-thm-pat-65-a} (this map will give us the position of $a$);
 \item[c.] glue the elements $a$ obtained in a.\ and b.\ and insert $b$ in the unique position in east-south of $B$ and west-north of $C$ in Fig.~\ref{pic-thm-pat-65-a} to obtain the desired permutation with $k$ occurrences of $p_2$.
 \end{itemize}
 }
 \end{remark}

\section{Distributions via recurrence relations}\label{sec-4}

In this section, we consider six patterns for which our generating functions approach does not work. Instead, we derive recurrence relations for the distribution of  respective patterns. The patterns are:
\begin{center}
\begin{tabular}{rrr}
	Nr.\ 8  = $\pattern{scale = 0.6}{2}{1/1,2/2}{0/0,0/1,1/0,1/1}$; &
	Nr.\ 9 = $\pattern{scale = 0.6}{2}{1/1,2/2}{0/1,1/1,1/2,2/1}$; &
	Nr.\ 14 = $\pattern{scale = 0.6}{2}{1/1,2/2}{0/1,1/1,1/2,1/0,1/2,2/1}$; \\[5pt]
	Nr.\ 15 = $\pattern{scale = 0.6}{2}{1/1,2/2}{0/1,0/2,1/0,1/1,1/2}$;  &
	Nr.\ 36 = $\pattern{scale=0.6}{2}{1/1,2/2}{0/1,1/2,0/0,1/0,1/1,2/1}$;  &
	Nr.\ 45 = $\pattern{scale=0.6}{2}{1/1,2/2}{0/1,1/2,1/0,1/1,2/1,0/2}$.  
\end{tabular}
\end{center}

\noindent
In this section, we denote by $T_{n,k}$  the number of $n$-permutations with $k$ occurrences of the pattern in question. Also, let $T_n(x)=\sum_{k= 0}^{n-1}T_{n,k}x^k$.

\subsection{Distributions of the patterns Nr.\ 8 and Nr.\ 9}
In this section we show that the distributions for the patterns Nr.\ 8 = $\pattern{scale = 0.6}{2}{1/1,2/2}{0/0,0/1,1/0,1/1}$ and Nr.\ 9 = $\pattern{scale = 0.6}{2}{1/1,2/2}{0/1,1/1,1/2,2/1}$ are given by the {\em unsigned Stirling numbers of the first kind} (see the sequence A132393 in \cite{OEIS}). In the proof of the next theorem we think of generating all $n$-permutations from all $(n-1)$-permutations first by inserting, in all possible places,  the largest element, and then by inserting the smallest element. 

\begin{thm}\label{dis-patterns-8-9}
	Both patterns $p_1=\pattern{scale = 0.6}{2}{1/1,2/2}{0/0,0/1,1/0,1/1}$ and  $p_2=\pattern{scale = 0.6}{2}{1/1,2/2}{0/1,1/1,1/2,2/1}$ satisfy 
	\begin{align}
	T_{n,k} = & \, T_{n-1,k-1}+ (n-1)T_{n-1,k}  \label{rec-rel-8-9}
	\end{align}
	with  the initial conditions $T_{n,0}=(n-1)!$ for $n\geq 1$ and $T_{0,0}=1$,  which shows that $T_{n,k}=C(n,k+1)$, the unsigned Stirling number of the first kind.	The row generating function for $T_{n,k}$ is given by
	\begin{align}
	\sum_{k=0}^{n-1}T_{n,k}x^k=\prod_{i=1}^{n-1}(x+i).\label{gf-rec-rel-8-9}
	\end{align}
\end{thm}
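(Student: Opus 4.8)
The plan is to establish the single recurrence \eqref{rec-rel-8-9} separately for each of $p_1$ and $p_2$ by an insertion argument, and then to read off both the Stirling identity and the product formula \eqref{gf-rec-rel-8-9} purely algebraically from that recurrence.

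First I would record what an occurrence of each pattern really is, reading off the shadings. An occurrence of $p_1$ is a pair of positions $p<q$ with $\pi(p)<\pi(q)$ such that every entry in positions $1,\dots,q-1$ other than $\pi(p)$ exceeds $\pi(q)$; equivalently, $q$ is a position having \emph{exactly one} smaller entry to its left, and such a $q$ then determines the occurrence uniquely. An occurrence of $p_2$ is a pair of consecutive values $(a,a+1)$ with $a$ lying to the left of $a+1$ and every entry strictly between them (in position) being smaller than $a$. Getting these two descriptions exactly right from the boxes is the step on which everything rests, so I would double-check them on $S_3$ against the predicted row $(x+1)(x+2)=x^2+3x+2$.

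For $p_1$ I would insert the largest element. Writing $\pi\in S_n$ as the result of inserting $n$ into $\sigma\in S_{n-1}$ at position $r$, the key observation is that $n$, being largest, is never the smaller entry of an occurrence and never lands in a forbidden box relative to an occurrence among the old entries; hence the number of smaller entries to the left is unchanged for every original entry, so these contribute exactly $p_1(\sigma)$. The new entry $n$ has precisely $r-1$ smaller entries to its left, so it creates an occurrence if and only if $r=2$. Thus $p_1(\pi)=p_1(\sigma)+[\,r=2\,]$, and summing over the $n$ choices of $r$ (one special, $n-1$ ordinary) gives \eqref{rec-rel-8-9}. For $p_2$ I would dually insert the smallest element: realize each $\pi\in S_n$ by shifting the values of some $\sigma\in S_{n-1}$ up by one and inserting the value $1$ at a position $r$. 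Since $1$ is below every other value, it can never sit in a forbidden box of an occurrence $(a,a+1)$ with $a\ge 2$, so all old occurrences survive and no new ones of that type appear; the only new occurrence is the pair $(1,2)$, which arises exactly when $1$ is placed immediately to the left of $2$. Again one of the $n$ insertion positions is special and the other $n-1$ are neutral, yielding \eqref{rec-rel-8-9}.

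Finally I would close the argument algebraically. The boundary values are immediate: the single permutation of length $1$ gives $T_{1,0}=1=0!$, and the neutrality of every insertion that does not create an occurrence gives $T_{n,0}=(n-1)!$. Setting $P_n(x)=\sum_k T_{n,k}x^k$, the recurrence \eqref{rec-rel-8-9} (valid for $n\ge 2$) becomes $P_n(x)=(x+n-1)P_{n-1}(x)$ with $P_1(x)=1$, whence $P_n(x)=\prod_{j=1}^{n-1}(x+j)$, which is \eqref{gf-rec-rel-8-9}. Comparing with the standard identity $\prod_{i=0}^{n-1}(x+i)=\sum_k C(n,k)x^k$ shows that the coefficient of $x^k$ here is $C(n,k+1)$, as claimed. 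The main obstacle is conceptual rather than computational: one must verify carefully that inserting the extreme element neither destroys nor spuriously creates occurrences other than the single intended one, which is precisely where the box-by-box reading of $p_1$ and $p_2$ is used.
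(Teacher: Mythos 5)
Your proposal is correct and follows essentially the same route as the paper: generate $S_n$ from $S_{n-1}$ by inserting the largest element (for $p_1$) or the smallest element (for $p_2$), observe that exactly one of the $n$ insertion positions creates a new occurrence while the other $n-1$ leave the count unchanged, and deduce the recurrence and initial conditions. Your explicit box-by-box characterizations of the occurrences and the direct derivation of $\sum_k T_{n,k}x^k=\prod_{i=1}^{n-1}(x+i)$ from the recurrence are slightly more detailed than the paper's (which cites known Stirling-number properties for the product formula), but the argument is the same.
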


\begin{proof}
Let us consider $p_1$. We note that avoidance for $p_1$ is given in \cite[Prop. 18]{Hilmarsson2015Wilf}, but it is easy to see directly that $T_{n,0}=(n-1)T_{n-1,0}$ for $n\geq 2$ and $T_{1,0}=1$, so that $T_{n,0}=(n-1)!$ for $n\geq 1$. Indeed, inserting the largest element $n$ in an $(n-1)$-permutation never decreases the number of occurrences of $p_1$. Furthermore, inserting $n$ in position 2 introduces an extra occurrence of $p_1$, and inserting $n$ in any other position preserves the number of occurrences of $p_1$.  These observations do not only prove the avoidance case (the initial condition), but also (\ref{rec-rel-8-9}).

For the pattern $p_2$, the arguments are very similar, except that we insert the smallest element $x$ instead of the largest element, and the only place when inserting $x$ increases the number of occurrences of $p_2$ by 1 is immediately before the minimal element 1.  The base case (avoidance) is given by  \cite[Prop. 19]{Hilmarsson2015Wilf}.

Finally, (\ref{gf-rec-rel-8-9}) follows from the well known properties of the unsigned Stirling numbers of the first kind \cite[A132393]{OEIS}; also, see \cite{Stanley}. \end{proof}
%
%
%
%

\subsection{Distributions of the patterns Nr.\ 14 and Nr.\ 15}\label{subsec-14-15}
An occurrence of the pattern Nr.\ 14 = $\pattern{scale = 0.6}{2}{1/1,2/2}{0/1,1/1,1/2,1/0,1/2,2/1}$ is known  as a {\em small ascent}, and its reverse as a {\em small descent}. The distribution of this pattern is given by the sequence A123513 in \cite{OEIS} with a reference to  $E_{n,1}(x)$ in Table 1 on page 291 in \cite{LR}. The next theorem derives a recurrence relation for the pattern and shows that the same recurrence relation works for the pattern Nr.\ 15 = $\pattern{scale = 0.6}{2}{1/1,2/2}{0/1,0/2,1/0,1/1,1/2}$ thus establishing equidistribution of these patterns.  
 
%
%

\begin{thm}\label{dis-patterns-14-15}
	Both patterns $p_1=\pattern{scale = 0.6}{2}{1/1,2/2}{0/1,1/1,1/2,1/0,1/2,2/1}$ and  $p_2=\pattern{scale = 0.6}{2}{1/1,2/2}{0/1,0/2,1/0,1/1,1/2}$ satisfy the recurrence relation  
	\begin{align}
	T_{n,k} = & \, T_{n-1,k-1}+ (k+1)T_{n-1,k+1} + (n-k-1)T_{n-1,k} \label{rec-rel-14-15}
	\end{align}
	with  the initial conditions	$T_{1, 0} = 1, T_{2, 0} = 1, T_{2, 1} = 1$.
	Equivalently,
	\begin{align}
	T_n(x) =(x+n-1) T_{n-1}(x)+(1-x) T'_{n-1}(x)\label{gf-rec-rel-14-15}
	\end{align}
	with the initial conditions $T_1(x)=1$ and $T_2(x)=1+x$. 
\end{thm}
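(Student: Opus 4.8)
The plan is to establish the three-term recurrence \eqref{rec-rel-14-15} for each pattern by a single insertion argument, and then to deduce the equivalent form \eqref{gf-rec-rel-14-15} from it by a routine generating-function manipulation; the base cases $T_1(x)=1$ and $T_2(x)=1+x$ are verified directly, since the one-element permutation has no occurrence, while $12$ contributes one occurrence and $21$ contributes none, for either pattern.

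First I would record what the two patterns count. An occurrence of $p_1$ is a pair of adjacent positions $i,i+1$ with $\pi_{i+1}=\pi_i+1$, i.e.\ a small ascent. An occurrence of $p_2$ is a pair of adjacent positions $i,i+1$ with $\pi_i<\pi_{i+1}$ such that every entry to the left of position $i$ is smaller than $\pi_i$; equivalently, $\pi_i$ is a left-to-right maximum immediately followed by another (necessarily new) left-to-right maximum, so that $p_2(\pi)$ counts the adjacencies among the left-to-right maxima of $\pi$.

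For $p_1$, I would generate each $\pi\in S_n$ by inserting the largest value $n$ into a permutation $\sigma\in S_{n-1}$, and classify the $n$ gaps according to their effect on the number of small ascents. If $\sigma$ has $j$ small ascents, then inserting $n$ immediately after the entry $n-1$ creates the small ascent $(n-1)\,n$ and destroys none (the entry $n-1$ cannot start a small ascent in $\sigma$), so exactly one gap raises the count by $1$; inserting $n$ inside any of the $j$ small ascents $\sigma_i\sigma_{i+1}$ destroys it and creates nothing there, so exactly $j$ gaps lower the count by $1$; and each of the remaining $n-1-j$ gaps leaves the count unchanged. Collecting these contributions by the resulting value $k$ gives \eqref{rec-rel-14-15}. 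For $p_2$ I would use the mirror insertion, namely inserting a new smallest value into $\sigma\in S_{n-1}$ in one of $n$ gaps. The key point is that inserting the global minimum never changes the left-to-right-maximum status of any other entry, so the set of left-to-right maxima of $\sigma$ survives and only their adjacencies can be affected. Placing the minimum at the very front creates one new occurrence (the minimum is then a left-to-right maximum followed by a larger one) and destroys none; placing it inside any of the $j$ adjacent pairs of left-to-right maxima separates exactly that pair, destroying one occurrence; and the remaining $n-1-j$ gaps have no effect. This is the identical tally $1,\ j,\ n-1-j$, so $p_2$ also satisfies \eqref{rec-rel-14-15}, which proves the equidistribution.

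To pass from \eqref{rec-rel-14-15} to \eqref{gf-rec-rel-14-15}, I would multiply the recurrence by $x^k$ and sum over $k$: the term $T_{n-1,k-1}$ contributes $xT_{n-1}(x)$, the term $(k+1)T_{n-1,k+1}$ contributes $T'_{n-1}(x)$ after reindexing, and $(n-1-k)T_{n-1,k}$ contributes $(n-1)T_{n-1}(x)-xT'_{n-1}(x)$; summing gives $(x+n-1)T_{n-1}(x)+(1-x)T'_{n-1}(x)$. The hard part is purely combinatorial, namely the locality of the two insertion analyses: one must check that each insertion changes the statistic by at most one and that no occurrence is created or destroyed away from the chosen gap. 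For $p_2$ this rests entirely on the fact that inserting the smallest entry can neither promote nor demote a left-to-right maximum, which is exactly what forces the ``$-1$'' gaps to be precisely the $j$ current occurrences and nothing more; making this locality precise is the crux of the proof.
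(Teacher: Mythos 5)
Your proof is correct and follows essentially the same route as the paper: both establish \eqref{rec-rel-14-15} by classifying the $n$ insertion gaps for a new extremal element into one gap that raises the count, $k+1$ gaps that lower it, and $n-k-1$ gaps that leave it unchanged, and then pass to \eqref{gf-rec-rel-14-15} by the same summation over $k$. The only cosmetic difference is that for $p_1$ the paper inserts a new \emph{smallest} element (creating a small ascent immediately in front of the old $1$) where you insert a new \emph{largest} element after $n-1$; these are mirror images of the same argument.
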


\begin{proof}
The initial conditions are easy to see for both $p_1$ and $p_2$. Next, we explain (\ref{rec-rel-14-15})	 for $p_1$ and $p_2$ thinking of generating all $n$-permutations counted by $T_{n,k}$ by inserting the new smallest element $x$ in an $(n-1)$-permutation.

For $p_1$, $T_{n-1,k-1}$ is the number of possibilities to insert $x$ right in front of the element 1, which increases the number of occurrences of $p_1$ by 1. Further, $(k+1)T_{n-1,k+1}$ counts the possibilities to pick one of the $(k+1)$ occurrences of $p_1$, say $ab$ (which is formed by consecutive elements) and remove it by inserting $x$ between $a$ and $b$. Finally, $(n-k-1)T_{n-1,k}$ counts the number of possibilities to insert $x$ and not to change the number of occurrences of $p_1$, where in $(n-k-1)$, $n$ is the number of possibilities to insert $x$, $k$ is the number of possibilities to decrease the number of occurrences of $p_1$, and $1$ is the number of possibilities to increase the number of occurrences of $p_1$.

For $p_2$, explanation for all the terms in (\ref{rec-rel-14-15}) is the same except $T_{n-1,k-1}$, which corresponds to inserting $x$ in the leftmost position, which increases the number of occurrences of $p_2$ by 1. 
	
We now obtain (\ref{gf-rec-rel-14-15}) from (\ref{rec-rel-14-15})	 as follows:
	\begin{align*}
	T_n(x) = & \, \sum_{k=0}^{n-1} T_{n-1,k-1}  x^k + \sum_{k=0}^{n-1} (k+1)T_{n-1,k+1} x^k  +\sum_{k=0}^{n-1} (n-k-1)T_{n-1,k} x^k \\[6pt]
	= &  \, x T_{n-1}(x) +  T'_{n-1}(x) + (n-1) T_{n-1}(x) - x  T'_{n-1}(x)\\[6pt]
	= & \, (x+n-1) T_{n-1}(x)+(1-x) T'_{n-1}(x).
	\end{align*}
This completes the proof. \end{proof}

\subsection{Distribution of the pattern Nr.\ 36}
In this section, we find the recurrence relation for the distribution of the pattern Nr.\ 36 = $\pattern{scale=0.6}{2}{1/1,2/2}{0/1,1/2,0/0,1/0,1/1,2/1}$. Note that an occurrence of the pattern is a small ascent (see Section~\ref{subsec-14-15}) in which the left element is a {\em left-to-right minimum}, that is, an element having no smaller elements to the left of it.


\begin{thm}\label{dis-patterns-36}
	The pattern $p=\pattern{scale=0.6}{2}{1/1,2/2}{0/1,1/2,0/0,1/0,1/1,2/1}$ satisfies the recurrence relation  
	\begin{align}
		T_{n,k} = & \, (k+1)T_{n-1,k+1}+(n-k)T_{n-1,k}  -T_{n-2,k}+T_{n-2,k-1}\label{rec-rel-36}
\end{align}
	with  the initial conditions	$T_{1, 0} = 1, T_{2, 0} = 1, T_{2, 1} = 1$.
Equivalently,
	\begin{align}
		T_n(x) = n T_{n-1}(x)+(1-x) T'_{n-1}(x)+(x-1)T_{n-2}(x)\label{gf-rec-rel-36}
	\end{align}
with the initial conditions $T_1(x)=1$ and $T_2(x)=1+x$. 
\end{thm}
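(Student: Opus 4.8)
The plan is to generate every $n$-permutation by inserting the largest value $n$ into an $(n-1)$-permutation $\sigma$ and to track how the statistic changes. Recall from the remark preceding the theorem that an occurrence of $p$ is a small ascent $\pi(i),\pi(i{+}1)=\pi(i){+}1$ at \emph{adjacent} positions whose left element $\pi(i)$ is a left-to-right minimum. The key structural fact I would exploit is that inserting the maximum $n$ can never change the left-to-right-minimum status of any other entry (since $n$ is larger than everything), so the count can only change through adjacency or through the single new pair $(n-1,n)$. For a fixed $\sigma$ with $j$ occurrences I would sort the $n$ insertion gaps into three kinds: (i) the $j$ \textbf{breaking} gaps lying strictly between the two entries of an existing occurrence, each destroying exactly that occurrence and creating nothing, so $j\to j-1$; (ii) the one \textbf{special} gap immediately after the entry $n-1$, which produces the occurrence $(n-1,n)$ precisely when $n-1$ is a left-to-right minimum, i.e.\ when $\sigma$ begins with $n-1$; and (iii) the remaining gaps, which are \textbf{neutral}. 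One checks these three families are disjoint (a breaking gap straddles some $c,c{+}1$ with $c\le n-2$, so it is never the special gap).

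The subtlety that forces the $T_{n-2}$ terms is that the special gap acts differently depending on whether $\sigma$ starts with its maximum, so I would split $T_{n-1,k}=A_{n-1,k}+B_{n-1,k}$, where $A_{n-1,k}$ counts the $(n-1)$-permutations with $k$ occurrences that begin with $n-1$, and $B_{n-1,k}$ counts the rest. Collecting the contributions that land on exactly $k$ occurrences gives
\begin{align*}
T_{n,k}=(k+1)T_{n-1,k+1}+A_{n-1,k-1}+(n-k-1)A_{n-1,k}+(n-k)B_{n-1,k},
\end{align*}
and using $(n-k-1)A_{n-1,k}+(n-k)B_{n-1,k}=(n-k)T_{n-1,k}-A_{n-1,k}$ rewrites this as
\begin{align*}
T_{n,k}=(k+1)T_{n-1,k+1}+(n-k)T_{n-1,k}-A_{n-1,k}+A_{n-1,k-1}.
\end{align*}

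It remains to identify $A_{n-1,k}$ with $T_{n-2,k}$, which I would isolate as the main lemma: deleting the leading entry of an $(n-1)$-permutation that begins with its maximum is a statistic-preserving bijection onto $(n-2)$-permutations. Indeed, a leading maximum belongs to no occurrence of $p$ (it cannot be a right element, having nothing to its left, nor a left element, having no larger consecutive value), and since it exceeds every remaining value its removal alters neither adjacencies nor left-to-right-minimum statuses in the tail; hence occurrences correspond exactly and $A_{n-1,k}=T_{n-2,k}$. Substituting this yields \eqref{rec-rel-36}. The equivalent form \eqref{gf-rec-rel-36} then follows by the routine translation $\sum_k (k+1)T_{n-1,k+1}x^k=T'_{n-1}(x)$, $\sum_k (n-k)T_{n-1,k}x^k=nT_{n-1}(x)-xT'_{n-1}(x)$, and $\sum_k(T_{n-2,k-1}-T_{n-2,k})x^k=(x-1)T_{n-2}(x)$, while the initial conditions $T_1(x)=1$ and $T_2(x)=1+x$ are checked by hand. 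The hard part is precisely recognising that the left-to-right-minimum condition makes the effect of the special gap depend on the position of the maximum, and then supplying the deletion bijection that collapses the ``starts-with-maximum'' count to $T_{n-2,k}$; the rest is bookkeeping.
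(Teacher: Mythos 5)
Your proof is correct and follows essentially the same route as the paper: generate $n$-permutations by inserting the maximum, classify the insertion gaps as breaking/creating/neutral, and identify the permutations beginning with their maximum with $T_{n-2,k}$ via deletion of the leading entry (the paper handles your $A_{n-1,k}$ split as an explicit over-counting correction $-T_{n-2,k}$, but it is the same bookkeeping). Your version merely makes the deletion bijection and the disjointness of the gap classes more explicit than the paper does.
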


\begin{proof} To explain (\ref{rec-rel-36}), we think of generating all $n$-permutations from $(n-1)$-permutations by inserting the new largest element $n$. We have:
\begin{itemize}
\item $T_{n-2,k-1}$ corresponds to inserting $n$ right after the leftmost element $x$ when $x=n-1$. In this case, $(n-1)n$ is an extra occurrence of $p$, while before inserting, $n-1$ is not involved in an occurrence of $p$.
\item $(k+1)T_{n-1,k+1}$ counts the possibilities to pick one of the $(k+1)$ occurrences of $p$, say $ab$ (which is formed by consecutive elements) and remove it by inserting $n$ between $a$ and $b$.  
\item $(n-k)T_{n-1,k}$ counts the number of possibilities to insert $x$ and not to change the number of occurrences of $p$, where in $(n-k)$, $n$ is the number of possibilities to insert $x$, and $k$ is the number of possibilities to decrease the number of occurrences of $p$.
\item $-T_{n-2,k}$ corresponds to the fact that there is over-counting in the term $(n-k)T_{n-1,k}$. Indeed, if an $(n-1)$-permutation begins with $(n-1)$ then the position immediately to the right of $(n-1)$ is counted by $(n-k)$. However, inserting $n$ in this position actually increases the number of occurrences of $p$.
\end{itemize}
	
We now obtain (\ref{gf-rec-rel-36}) from (\ref{rec-rel-36})  as follows:
	\begin{align*}
	T_n(x) = & \, \sum_{k=0}^{n-1} (k+1)T_{n-1,k+1} x^k + \sum_{k=0}^{n-1} (n-k)T_{n-1,k} x^k \\[5pt]
	& \,   -\sum_{k=0}^{n-1} T_{n-2,k} x^k + \sum_{k=0}^{n-1} T_{n-2,k-1} x^k\\[6pt]
	= &  \, T'_{n-1}(x) +  n T_{n-1}(x)- x  T'_{n-1}(x)- T_{n-2}(x)+ x T_{n-2}(x)\\[6pt]
	= & \, n T_{n-1}(x)+(1-x) T'_{n-1}(x)+(x-1)T_{n-2}(x).
	\end{align*}
	This completes the proof.
\end{proof}

\subsection{Distribution of the pattern Nr.\ 45}
In this section, we find the recurrence relation for the distribution of the pattern Nr.\ 45 = $\pattern{scale=0.6}{2}{1/1,2/2}{0/1,1/2,1/0,1/1,2/1,0/2}$, which is the most difficult case in Section~\ref{sec-4}.  


\begin{thm}\label{dis-patterns-45}
	The pattern $p=\pattern{scale=0.6}{2}{1/1,2/2}{0/1,1/2,1/0,1/1,2/1,0/2}$ satisfies the recurrence relation 
	\begin{align}
	T_{n,k} & =   (k + 1) T_{n - 1, k + 1} + (n - k - 1) T_{n - 1, k}   + T_{n - 1, k - 1}  \notag \\[5pt]
	& \quad + (k + 1) T_{n - 2, k + 1} + (n - 2 k - 2) T_{n - 2, k} - (n - k - 1) T_{n - 2, k - 1}
	\label{main-for-45}
	\end{align}
with  the initial conditions	$T_{1, 0} = 1, T_{2, 0} = 1, T_{2, 1} = 1$.
Equivalently,
\begin{align}
T_n(x)  = & (x+n-1)  T_{n-1}(x) + (1-x)  T'_{n-1}(x) \notag\\[5pt]
& +(n-2) (1-x) T_{n-2}(x) + (1-x)^2 T'_{n-2}(x), \label{eq:T1}
\end{align}
with the initial conditions $T_1(x)=1$ and $T_2(x)=1+x$.
\end{thm}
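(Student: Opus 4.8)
I would begin by unwinding the shading of $p=\pattern{scale=0.6}{2}{1/1,2/2}{0/1,1/2,1/0,1/1,2/1,0/2}$ into a description of an occurrence. The three shaded boxes filling the middle column force the two chosen entries into adjacent positions, while $\boks{0}{1}$ and $\boks{2}{1}$ (together with adjacency) forbid any value strictly between them, so the two entries carry consecutive values in increasing order; this is exactly a \emph{small ascent}, as in Nr.\ 14. The only feature distinguishing $p$ from Nr.\ 14 is the extra box $\boks{0}{2}$: combined with $\boks{0}{1}$ it forbids every value exceeding the lower entry from appearing to its left, i.e.\ it demands that the bottom of the small ascent be a \emph{left-to-right maximum}. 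Thus $p(\pi)$ counts the small ascents $(\pi_i,\pi_{i+1})=(v,v+1)$ whose bottom $v$ is larger than everything preceding it.

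To obtain \eqref{main-for-45} I would generate each $n$-permutation from an $(n-1)$-permutation by inserting the new smallest element, exactly as in Theorem~\ref{dis-patterns-14-15}. This choice is deliberate: inserting a global minimum never alters which of the remaining entries are left-to-right maxima, and it can destroy a small ascent only by being slid between its two entries. Hence the break/preserve bookkeeping closely parallels the one for Nr.\ 14/15 and produces the first-order terms $(k+1)T_{n-1,k+1}+(n-k-1)T_{n-1,k}+T_{n-1,k-1}$ — equivalently the operator $(x+n-1)T_{n-1}(x)+(1-x)T'_{n-1}(x)$ of \eqref{gf-rec-rel-14-15}. The novelty is the creation step: the inserted minimum can be the bottom of a \emph{new} occurrence only if it is itself a left-to-right maximum, which happens precisely when it is placed at the very front with the previous minimum immediately after it. So the term that Theorem~\ref{dis-patterns-14-15} records as $T_{n-1,k-1}$ must be replaced by the count of $(n-1)$-permutations that begin with their minimum, and the same perturbation creates an over-count in the preservation step (such permutations create rather than preserve when the minimum lands at the front).

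The crux — and what I expect to be the main obstacle — is converting this ``begins with its minimum'' correction into the exact second-order terms $(k+1)T_{n-2,k+1}+(n-2k-2)T_{n-2,k}-(n-k-1)T_{n-2,k-1}$. My plan is to isolate the auxiliary statistic $U_{n-1,k}=\#\{\sigma\in S_{n-1}:\sigma_1=\min\sigma,\ p(\sigma)=k\}$, which is exactly what governs both the defective creation term and the accompanying over-count. Peeling the leading minimum off such a $\sigma=1\tau$ reduces $U$ to permutations of length $n-2$, where the new leading entry again contributes an occurrence precisely when $\tau$ begins with its own minimum; resolving this nested condition by inclusion–exclusion — in the same spirit as the corrective $-T_{n-2,k}$ term in the proof of Theorem~\ref{dis-patterns-36} — is what yields the three $T_{n-2}$ coefficients, including the negative one. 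I would confirm the signs and index ranges by checking the identity against the directly computed values for $n\le 4$.

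Finally, passing from \eqref{main-for-45} to \eqref{eq:T1} is routine: multiplying by $x^k$, summing over $k$, and using $\sum_k(k+1)T_{m,k+1}x^k=T'_m(x)$ and $\sum_k kT_{m,k}x^k=xT'_m(x)$, the three $T_{n-1}$ terms collapse to $(x+n-1)T_{n-1}(x)+(1-x)T'_{n-1}(x)$ and the three $T_{n-2}$ terms to $(n-2)(1-x)T_{n-2}(x)+(1-x)^2T'_{n-2}(x)$, giving \eqref{eq:T1}. The stated initial values $T_1(x)=1$ and $T_2(x)=1+x$ match $T_{1,0}=T_{2,0}=T_{2,1}=1$, which closes the induction.
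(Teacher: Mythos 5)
Your proposal is correct and follows essentially the same route as the paper: you read the pattern as a small ascent whose bottom is a left-to-right minimum/maximum in the appropriate sense, insert the new smallest element, and your auxiliary statistic $U_{n,k}$ is exactly the paper's $B_{n,k}$ (permutations beginning with their minimum), with your ``peeling'' relation being precisely the paper's $B_{n,k}=B_{n-1,k-1}+T_{n-1,k}-B_{n-1,k}$. The only step you leave schematic --- eliminating $U$ to get the three $T_{n-2}$ coefficients --- is exactly the paper's substitution of the first-order recurrence back into itself at level $n-1$ (done there via generating functions), and it does yield \eqref{main-for-45} as you predict.
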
	

\begin{proof}
The initial conditions are easy to check. 

Let $B_{n,k}$ be the number of $n$-permutations beginning with the smallest element 1 and having $k$ occurrences of $p$. We claim that 
\begin{equation}
B_{n,k}=B_{n-1,k-1}+T_{n-1,k}-B_{n-1,k}.
\label{B-for-45}
\end{equation}
Indeed, it is not difficult to see that $B_{n-1,k-1}$ is the number of $n$-permutations that begin with  12, because $12$ is an occurrence of $p$. Further, $T_{n-1,k}-B_{n-1,k}$ counts those $n$-permutations counted by $B_{n,k}$ that do not begin with $12$, which is easy to see by removing 1 and decreasing by 1 any other element in each such permutation.
 
Next, we claim that  
\begin{equation}\label{T-for-45}
T_{n,k}=B_{n-1,k-1}+(k+1)T_{n-1,k+1}+(n-k)T_{n-1,k}-B_{n-1,k}.
\end{equation}
Indeed, we can think of creating all $n$-permutations from all $(n-1)$-permutations by inserting the new smallest element $x$ in all possible places. The terms in (\ref{T-for-45}) are then explained as follows. 

\begin{itemize}
\item $B_{n-1,k-1}$ describes the situation when an $(n-1)$-permutation begins with the smallest element 1, and $x$ is inserted at the very beginning (immediately to the left of $1$). 
\item $(k+1)T_{n-1,k+1}$ describes the situation when an occurrence $ab$ of $p$ is replaced by $axb$ thus eliminating one occurrence of $p$. 
\item $(n-k)T_{n-1,k}-B_{n-1,k}$ describes the situation when inserting $x$ does not change the number of occurrences of $p$. For any $(n-1)$-permutation with $k$ occurrences of $p$ there are $(n-k)$ places to insert $x$ except when the $(n-1)$-permutation begins with 1,  because inserting $x$ immediately before $1$ actually increases the number of occurrences of $p$. 
\end{itemize}
 
From (\ref{T-for-45}) and (\ref{B-for-45}) we have
\begin{align}
	T_{n,k} = B_{n,k}+(k+1)T_{n-1,k+1}+(n-k-1)T_{n-1,k}.
	\label{mixed-B-T-45}
\end{align}
Let $B_{n}(x)=\sum_{k=0}^{n-1}B_{n,k}x^k$. From (\ref{mixed-B-T-45}), we have 
\begin{align*}
	T_n(x)  = &  \, B_{n}(x) +   T'_{n-1}(x) + (n-1) T_{n-1}(x) - x  T'_{n-1}(x) \notag \\[5pt]
				= & \, B_{n}(x) + (n-1) T_{n-1}(x) + (1-x)  T'_{n-1}(x).
\end{align*}
Hence,
\begin{align}
	T_n(x) - (x-1)T_{n-1}(x)  = & \, B_{n}(x) + (1-x) B_{n-1}(x)\notag\\[5pt]
	& + (n-1) T_{n-1}(x) + (1-x)  T'_{n-1}(x) \notag\\[5pt]
	& + (n-2) (1-x) T_{n-2}(x) + (1-x)^2 T'_{n-2}(x). \label{eq:T11234}
\end{align}
On the other hand, from (\ref{B-for-45}), we have
\begin{align}
B_{n}(x)  = &  \, x B_{n-1}(x) +   T_{n-1}(x) - B_{n-1}(x) \notag \\[5pt]
				 = &  \,  T_{n-1}(x) + (x-1) B_{n-1}(x).\label{eq:1010101}
\end{align}	
By \eqref{eq:1010101}, we can replace  $B_{n}(x) + (1-x) B_{n-1}(x)$ in \eqref{eq:T11234} by $T_{n-1}(x)$, which completes the proof of \eqref{eq:T1}. The equation \eqref{main-for-45} is obtained from \eqref{eq:T1} by taking the coefficients of both sides.
\end{proof}

\section{An equidistribution result} 

For a sequence $s$ of distinct numbers, the {\em reduced form} of $s$, red$(s)$, is the permutation obtained from $s$ by replacing the $i$-th smallest number by $i$, $1\leq i\leq |s|$. The notion of  reduced form red$(A)$ for a block $A$ in a schematic diagram representing a permutation is defined in the same way since any block represents a sequence of distinct numbers.  

Even though we are not able to find the distributions of the patterns

\begin{center}
\begin{small}
Nr.\ 48 = $\pattern{scale = 0.6}{2}{1/1,2/2}{0/1,1/2,0/0,2/1,2/2}$\ , 
Nr.\ 49 = $\pattern{scale = 0.6}{2}{1/1,2/2}{0/1,1/2,0/0,1/1,2/0}$\ ,
Nr.\ 50 = $\pattern{scale = 0.6}{2}{1/1,2/2}{0/1,1/2,0/0,1/1,2/2}$\ ,
\end{small}
\end{center}

\noindent
in this section we will show in a bijective way that  the pattern Nr.\ 48 is equidistributed with the pattern Nr.\ 49. Additionally, note that in Table~\ref{tab-2}  we conjecture that all three patterns Nr.\ 48, Nr.\ 49 and Nr.\ 50 have the same distribution. 

%

\begin{thm}\label{eqdis-patterns-48-49}
	The patterns $p_1=\pattern{scale = 0.6}{2}{1/1,2/2}{0/1,1/2,0/0,2/1,2/2}$    and  $p_2=\pattern{scale = 0.6}{2}{1/1,2/2}{0/1,1/2,0/0,1/1,2/0}$ are equidistributed.
\end{thm}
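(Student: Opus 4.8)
The plan is to make the two mesh patterns concrete, isolate the one feature they share, and then build a recursive, reduced-form bijection in the spirit of Remarks~\ref{bijective-remark-1} and~\ref{bijective-remark-2}. First I would read off the shaded boxes to get region conditions on an occurrence. Writing $\pi=\pi_1\cdots\pi_n$, a pair of positions $p<q$ with $a:=\pi_p<\pi_q=:b$ is an occurrence exactly when
\begin{align*}
p_1:\quad & \pi_i>b \ (i<p), \quad \pi_i<b \ (p<i<q), \quad \pi_i<a\ (i>q);\\[3pt]
p_2:\quad & \pi_i>b \ (i<p), \quad \pi_i<a \ (p<i<q), \quad \pi_i>a\ (i>q).
\end{align*}
Thus $p_1$ and $p_2$ share only the \emph{prefix} condition: every entry before $a$ exceeds $b$, so $a=\pi_p$ is the leftmost entry not exceeding $b$. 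They differ in the middle and tail: for $p_1$ the suffix starting at $p$ has maximum $b$ (so $p$ is a skew-decomposition point) and everything after $b$ falls below $a$, whereas for $p_2$ the small values sit in a contiguous block between $p$ and $q$ and everything after $q$ lies above $a$. I would also record that none of the three mesh-pattern symmetries fixing the underlying pattern $12$ (inverse, the $180^\circ$ rotation, and the anti-diagonal reflection) carries the shaded set of $p_1$ to that of $p_2$, so a genuine bijection is required rather than a symmetry.

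Next I would set up the recursion. The shared prefix condition says that, in either pattern, a canonical occurrence (say the one with the leftmost possible $a$) is preceded by a block $A$ consisting entirely of large values; this gives a common ``top block'' to peel off. Choosing such a canonical occurrence $ab$ decomposes the schematic diagram into $A$ together with the material lying below and to the right of $a$ and $b$. I would reduce each block via $\mathrm{red}(\cdot)$, recurse on the sub-block that still carries the bulk of the occurrences (the analogue of the permutation ``$b,C,D$'' in Remark~\ref{bijective-remark-1}), map the forced-avoiding block directly, and recombine, reinserting the pair $ab$ in its uniquely determined slot. The base case is avoidance: at each length the two avoidance classes are finite, and — as the recursion itself will certify, just as in Remarks~\ref{bijective-remark-1} and~\ref{bijective-remark-2} — equinumerous, so $p_1$-avoiders can be matched to $p_2$-avoiders lexicographically to anchor the induction.

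The verification step is to show that the occurrence count splits in parallel for both patterns: one occurrence contributed by the canonical pair (when present), plus the occurrences internal to the recursively treated block, plus those internal to the remaining block, with the shaded boxes immediately around $a$ and $b$ ruling out any cross-block occurrence. If this same additive split is valid for $p_1$ and for $p_2$, then the inductive hypothesis (that the map preserves the statistic on shorter permutations) transfers the count through recombination, proving $p_2(\Phi(\pi))=p_1(\pi)$.

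The hard part is precisely reconciling the divergent middle/tail conditions. Because $p_1$ forces a skew-cut with $b$ the suffix-maximum while $p_2$ instead forces all values below $a$ to be contiguous and everything after $q$ to be large, a naive insertion or deletion of the largest (or smallest) element both \emph{destroys} and \emph{creates} occurrences — indeed the maximum $n$ can only participate as the ``$b$'' of the single pair $(\pi_1,n)$, but with incompatible side-conditions for the two patterns — which is exactly why a plain recurrence of the Section~\ref{sec-4} type fails here and a bijection is needed. The delicate point is to design the block surgery on the reduced forms so that $p_1$'s ``suffix-maximum / everything-below-on-the-right'' structure is converted into $p_2$'s ``small-values-contiguous / everything-above-on-the-right'' structure with the count unchanged, and to check that reduction followed by recombination introduces no spurious occurrences at the seams. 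I expect this to require an auxiliary enumerator tracking the boundary behaviour, analogous to the $B(x,q)$ used in Theorems~\ref{thm-pat-55} and~\ref{thm-pat-56}.
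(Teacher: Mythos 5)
Your occurrence characterizations of $p_1$ and $p_2$ are correct, and your overall framework (a bijection anchored by a strong induction that simultaneously establishes Wilf-equivalence, with avoiders matched lexicographically, in the spirit of Remarks~\ref{bijective-remark-1} and~\ref{bijective-remark-2}) is the right one. But the proposal stops exactly where the proof has to begin: you explicitly defer ``the block surgery on the reduced forms'' and say you ``expect this to require an auxiliary enumerator tracking the boundary behaviour.'' That surgery is the entire content of the theorem, and without it nothing is proved. In particular, you never establish the two structural facts that make a bijection possible at all: (i) each element of a permutation lies in at most one occurrence of $p_1$ (resp.\ $p_2$); and (ii) the $k$ occurrences of $p_1$ are forced into a disjoint, left-to-right \emph{descending staircase} (if $ab$ and $a'b'$ are occurrences with $a$ left of $a'$, then the shaded-box conditions force $b$ to lie strictly left of $a'$ and $b'<a$), whereas the $k$ occurrences of $p_2$ are forced to be \emph{positionally nested}, with values decreasing as one moves inward. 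These facts decompose a permutation with $k$ occurrences into a block $A$ plus, for each occurrence, three sub-blocks $X_1,X_2,X_3$, of which $X_1,X_3$ are unconstrained and $A,X_2$ are forced to be pattern-avoiding; that decomposition is what your sketch is missing.

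The paper's proof then defines the bijection explicitly on this block structure in one global step rather than by peeling occurrences recursively: rotate each triple $\{X_1,X_2,X_3\}$ by $180^{\degree}$, replace $\mathrm{red}(A)$ and each $\mathrm{red}(X_2)$ by its image under the lexicographic avoidance bijection $f$ (well defined by the induction), and observe that the positions and values of all the pairs $x_iy_i$ are then uniquely forced, so no spurious occurrences can appear at the seams. No auxiliary enumerator of $B(x,q)$ type is needed, and your concern about insertion of the maximum creating and destroying occurrences is a red herring: the bijection is not built by element insertion at all, which is precisely why this case is handled bijectively rather than by a Section~\ref{sec-4}-style recurrence. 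The concrete gap, then, is that you have reduced the problem to designing a count-preserving staircase-to-nest block correspondence but have not designed it.
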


\begin{proof} We refer to Example~\ref{ex-eqdis-patterns-48-49} illustrating our bijective proof. 

We begin with considering permutations with at least one occurrence of $p_1$ or $p_2$, and we will describe a map $g$ that sends bijectively a permutation with $k$ occurrences of $p_1$ to a permutation with $k$ occurrences of $p_2$.
 
It is straightforward to see that any element of a permutation can be involved in at most one occurrence of the pattern $p_1$. The same is true for the pattern $p_2$. Looking more closely at the occurrences of $p_1$ and $p_2$, we see that they must appear like in Fig.~\ref{dis-pic-pat-48} and~\ref{dis-pic-pat-49}, respectively, where we demonstrate the structure on three occurrences, but any other number of occurrences will clearly follow the same patten. In Fig.~\ref{dis-pic-pat-48}, the three occurrences of $p_1$, from left to right, are given by $x_iy_i$, and it is easy to see that $X_1$ and $X_3$ can be any permutations, and $X_2$ and $A$ must be $p_1$-avoiding for $X\in\{B,C,D\}$. Similarly, in Fig.~\ref{dis-pic-pat-49}, the three occurrences of $p_2$, from bottom to upward, are given by $x'_iy'_i$, and it is easy to see that $X'_1$ and $X'_3$ can be any permutations, and $X'_2$ and $A'$ must be $p_2$-avoiding for $X\in\{B,C,D\}$. 

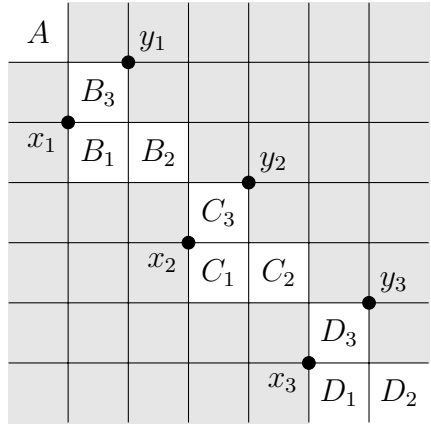
\begin{figure}[!htb]
	\begin{center}
		\begin{tikzpicture}[scale=0.8, baseline=(current bounding box.center)]
		\foreach \x/\y in {0/0,0/1,0/2,0/3,0/4,0/5,1/0,1/1,1/2,1/3,1/6,2/0,2/1,2/2,2/3,2/5,2/6,3/0,3/1,3/4,3/5,3/6,4/0,4/1,4/3,4/4,4/5,4/6,5/2,5/3,5/4,5/4,5/5,5/6,6/1,6/2,6/3,6/4,6/5,6/6}		    
		\fill[gray!20] (\x,\y) rectangle +(1,1);
		\draw (0.01,0.01) grid (6+0.99,6+0.99);
		\filldraw (1,5) circle (3pt) node[below left] {$x_1$};
		\filldraw (2,6) circle (3pt) node[above right] {$y_1$};
		\filldraw (3,3) circle (3pt) node[below left] {$x_2$};
		\filldraw (4,4) circle (3pt) node[above right] {$y_2$};
		\filldraw (5,1) circle (3pt) node[below left] {$x_3$};
		\filldraw (6,2) circle (3pt) node[above right] {$y_3$};
		\node  at (0.5,6.5) {$A$};
		\node  at (1.5,4.5) {$B_1$};\node  at (2.5,4.5) {$B_2$}; \node  at (1.5,5.5) {$B_3$};
		\node  at (3.5,2.5) {$C_1$};\node  at (4.5,2.5) {$C_2$}; \node  at (3.5,3.5) {$C_3$};
		\node  at (5.5,0.5) {$D_1$};\node  at (6.5,0.5) {$D_2$}; \node  at (5.5,1.5) {$D_3$};
		\end{tikzpicture}
	\end{center}
	\caption{Related to the proof of Theorem~\ref{eqdis-patterns-48-49}; a permutation $\pi$ with three occurrences of the pattern $p_1$}\label{dis-pic-pat-48}
\end{figure}

Given a permutation $\pi\in S_n$ with three occurrences of $p_1$ that is shown schematically in Fig.~\ref{dis-pic-pat-48}, we explain how to find the permutation $\sigma=g(\pi)\in S_n$ with three occurrences of $p_2$ that is shown schematically in  Fig.~\ref{dis-pic-pat-49}. Our description will be easily extendable to a larger, or fewer, number of occurrences of $p_1$ and $p_2$, and the fact that $g$ is bijective will be not difficult to see, so we omit its proof. Finally, the description uses the fact, to be justified at the end of the proof, that $|S_n(p_1)|=|S_n(p_2)|$, so the bijective function $f$ from $S_n(p_1)$ to $S_n(p_2)$, sending lexicographically smallest elements to lexicographically smallest elements, is well defined.

\begin{itemize}
\item Let $f$(red($A$)) be $A'$.
\item The sub-permutation of $\sigma$ formed by $\{X'_1,X'_2,X'_3\}$, for $X\in \{B,C,D\}$, is obtained from $\{X_1,X_2,X_3\}$ by the composition of rotation $180^{\degree}$ and then replacing $X_2$ by $f$(red($X_2$)) while keeping the same relative order with the elements in  $X'_1$ (see Example~\ref{ex-eqdis-patterns-48-49} below). Observe that this step is easily reversible. 
\item Note that the positions and values of $x'_i$ and $y'_i$, $i=1,2,3$, in $\sigma$ are uniquely determined once $A'$ and $X'_i$s for $X\in\{B,C,D\}$, are defined. 
\end{itemize}

\begin{figure}[!htb]
	\begin{center}
		\begin{tikzpicture}[scale=0.8, baseline=(current bounding box.center)]
		\foreach \x/\y in {0/0,0/1,0/2,0/3,0/4,0/5,1/0,1/1,1/2,1/3,1/5,1/6,2/0,2/1,2/3,2/4,2/5,2/6,3/1,3/2,3/3,3/4,3/5,3/6,4/0,4/3,4/4,4/5,4/6,5/0,5/1,5/2,5/5,5/6,6/0,6/1,6/2,6/3,6/4}		    
		\fill[gray!20] (\x,\y) rectangle +(1,1);
		\draw (0.01,0.01) grid (6+0.99,6+0.99);
		\filldraw (1,5) circle (3pt) node[above left] {$x'_3$};
		\filldraw (6,6) circle (3pt) node[above left] {$y'_3$};
		\filldraw (2,3) circle (3pt) node[above left] {$x'_2$};
		\filldraw (5,4) circle (3pt) node[above left] {$y'_2$};
		\filldraw (3,1) circle (3pt) node[above left] {$x'_1$};
		\filldraw (4,2) circle (3pt) node[above left] {$y'_1$};
		\node  at (3.5,0.5) {$A'$};
		\node  at (6.5,6.5) {$B'_1$};	\node  at (0.5,6.5) {$B'_2$};	 \node  at (6.5,5.5) {$B'_3$};
		\node  at (5.5,4.5) {$C'_1$};	\node  at (1.5,4.5) {$C'_2$}; 	\node  at (5.5,3.5) {$C'_3$};
		\node  at (4.5,2.5) {$D'_1$};	\node  at (2.5,2.5) {$D'_2$}; 	\node  at (4.5,1.5) {$D'_3$};
		\end{tikzpicture}
	\end{center}
	\caption{Related to the proof of Theorem~\ref{eqdis-patterns-48-49}; a permutation $\pi$ with three occurrences of the pattern $p_2$}\label{dis-pic-pat-49}
\end{figure}

We conclude the proof by showing that $p_1$ and $p_2$ are Wilf-equivalent. Let $a_{1,n}$ (resp., $a_{2,n}$) be the number of $n$-permutations avoiding $p_1$ (resp., $p_2$), and $b_{1,n}$ (resp., $b_{2,n}$) be the number of $n$-permutations containing at least one occurrence of $p_1$ (resp., $p_2$). Clearly, \begin{equation}\label{simple-observ}a_{i,n}+b_{i,n}=n!\mbox{ for } i=1,2.\end{equation}

We proceed by the strong version of induction on $n$. Note that $a_{1,1}=a_{2,1}=1$ and we can assume that for all $m<n$, $a_{1,m}=a_{2,m}$. We want to prove that  $b_{1,n}=b_{2,n}$, which will imply that $a_{1,n}=a_{2,n}$ by (\ref{simple-observ}).  However, this is an immediate corollary of the structures presented in Fig.~\ref{dis-pic-pat-48} and~\ref{dis-pic-pat-49} and the description of $g$, taking into account that we have assumed $|S_m(p_1)|= |S_m(p_2)|$ for $m<n$.
\end{proof}

\begin{exa}[To support the proof of Theorem~\ref{eqdis-patterns-48-49}]\label{ex-eqdis-patterns-48-49}  
By mapping lexicographically, we have $f(1)=1$; $f(21)=21$; $f(132)=231$ and $f(321)=321$. We now demonstrate the  application of $g$ to the permutation $\pi=(15)(17)(16)9(10)6(12)8(13)(11)(14)745321$ having two occurrences of $p_1$. 

We have $f(${\em red}$(A))=f(132)=231=A'$; $6,8\in B_1$, $7\in B_2$, and $(10), (11), (12), (13)\in B_3$; $(10),(12)\in B'_1$, $(11)\in B'_2$, and $5,6,7,8\in B'_3$; $C_1=C_3=C'_1=C'_3=\emptyset$; $1,2,3\in C_2$ and $(15),(16),(17)\in C'_2$; $x_1=9$,  $x_2=4$,  $y_1=14$,  $y_2=5$,  $x'_1=4$,  $x'_2=(13)$,  $y'_1=9$,  $y'_2=(14)$. Thus, the desired permutation with two occurrences of $p_2$ is\\[-4mm] $$\sigma=g(\pi)=(17)(16)(15)(13)(11)4231975(10)6(12)8(14).$$
\end{exa}

\section{Concluding remarks}

We refer to Table~\ref{tab-2} for our conjectured equidistributions.  Note that the structure of permutations with $k$ occurrences of the pattern Nr.\ 50 = $\pattern{scale = 0.6}{2}{1/1,2/2}{0/1,1/2,0/0,1/1,2/2}$ is as given in Fig.~\ref{dis-pic-pat-50} for $k=3$, where $X_1$ and $X_3$ can be any permutations, and $X_2$ and $A$ must be $\pattern{scale = 0.6}{2}{1/1,2/2}{0/1,1/2,0/0,1/1,2/2}$ - avoiding for $X\in\{B,C,D\}$. Even though the structure in Fig.~\ref{dis-pic-pat-50} is very similar to those in Fig.~\ref{dis-pic-pat-48} and~\ref{dis-pic-pat-49} corresponding to the patterns Nr.\ 48 and Nr.\ 49, respectively, we were not able to find a bijective proof showing the conjectured equidistribution of the three patterns. Indeed, there is a problem with $\pattern{scale = 0.6}{2}{1/1,2/2}{0/1,1/2,0/0,1/1,2/2}$ - avoiding blocks $X_2$ in Fig.~\ref{dis-pic-pat-50}, for $X\in\{B,C,D\}$, having relations with both $X_1$ and $X_3$, horizontally and vertically, while in Fig.~\ref{dis-pic-pat-48} and~\ref{dis-pic-pat-49} the respective blocks $X_2$ have only horizontal relations with $X_1$, and $X_1$ and $X_3$ have vertical relations. 

\begin{figure}[!htb]
	\begin{center}
		\begin{tikzpicture}[scale=0.8, baseline=(current bounding box.center)]
		\foreach \x/\y in {0/0,0/1,0/2,0/3,0/4,0/5,1/0,1/1,1/2,1/3,1/5,1/6,2/0,2/1,2/2,2/3,2/6,3/0,3/1,3/3,3/4,3/5,3/6,4/0,4/1,4/4,4/5,4/6,5/1,5/2,5/3,5/4,5/4,5/5,5/6,6/2,6/3,6/4,6/5,6/6}		    
		\fill[gray!20] (\x,\y) rectangle +(1,1);
		\draw (0.01,0.01) grid (6+0.99,6+0.99);
		\filldraw (1,5) circle (3pt) node[above left] {$x_1$};
		\filldraw (2,6) circle (3pt) node[above left] {$y_1$};
		\filldraw (3,3) circle (3pt) node[above left] {$x_2$};
		\filldraw (4,4) circle (3pt) node[above left] {$y_2$};
		\filldraw (5,1) circle (3pt) node[above left] {$x_3$};
		\filldraw (6,2) circle (3pt) node[above left] {$y_3$};
		\node  at (0.5,6.5) {$A$};
		\node  at (1.5,4.5) {$B_1$};\node  at (2.5,4.5) {$B_2$}; \node  at (2.5,5.5) {$B_3$};
		\node  at (3.5,2.5) {$C_1$};\node  at (4.5,2.5) {$C_2$}; \node  at (4.5,3.5) {$C_3$};
		\node  at (5.5,0.5) {$D_1$};\node  at (6.5,0.5) {$D_2$}; \node  at (6.5,1.5) {$D_3$};
		\end{tikzpicture}
	\end{center}
	\caption{Related to Pattern Nr.\ 50}\label{dis-pic-pat-50}
\end{figure}
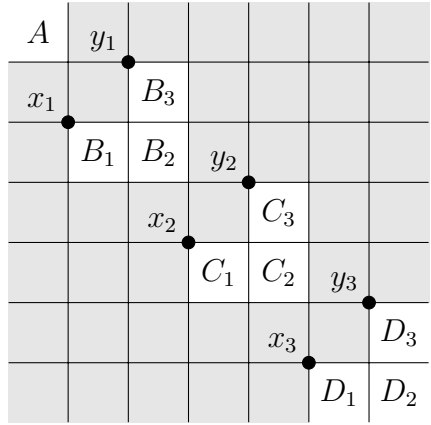

Additionally, we state the following conjecture.

\begin{conj}\label{conj-nr-3} 
{\em The distribution of the pattern Nr.\ $3$ = $\pattern{scale = 0.6}{2}{1/1,2/2}{0/0,0/1,1/2}$ is given by \cite[$A200545$]{OEIS}, which is 	the triangle, read by rows, given by \\[-3mm]
$$(1,0,2,1,3,2,4,3,5,4,6,5,7,6,\ldots)\ \mbox{\em DELTA } (0,1,0,1,0,1,0,1,0,1,\ldots)$$ where {\em DELTA} is the operator defined in $A084938$ in \cite{OEIS} in terms of continued fraction: the triangle $[r_0,r_1,\ldots]\  \mbox{{\em DELTA}}\  [s_0,s_1,\ldots]$ has generating function $$\frac{1}{1-\frac{r_0x+s_0xy}{1-\frac{r_1x+s_1xy}{1-\frac{r_2x+s_2xy}{1-\ldots}}}}.$$}
\end{conj}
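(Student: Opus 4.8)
\emph{Reinterpreting the pattern.} The plan is to first turn the two-point mesh condition into a one-point positional statistic, and then match its generating function to the continued fraction. Writing an occurrence of $p=$ Nr.$3$ as positions $i_1<i_2$ with values $v_1=\pi(i_1)<v_2=\pi(i_2)$, the shaded boxes $\boks00,\boks01$ force every entry left of $i_1$ to exceed $v_2$, while $\boks12$ forces every entry strictly between $i_1$ and $i_2$ to be smaller than $v_2$. Hence each value is the top of at most one occurrence, and $p(\pi)$ equals the number of positions $j$ ($2\le j\le n$) for which the prefix $\pi(1)\cdots\pi(j-1)$, read relative to $\pi(j)$, has the shape $H^aL^b$ with $b\ge1$ (all entries above $\pi(j)$ precede all entries below it, and at least one lies below). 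First I would prove this reformulation carefully, since it converts a global mesh condition into a left-to-right scan amenable to an insertion analysis; one checks $T_1=1$, $T_2=1+x$, $T_3=1+4x+x^2$, in agreement with the claimed triangle.

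\emph{Reduction to a Jacobi continued fraction.} The stated S-fraction has partial numerators $\alpha_{2j-1}=j$ and $\alpha_{2j}=(j-1)+q$. Applying the even contraction $S = 1/(1 - \alpha_1 t - \lambda_1 t^2/(1 - (\alpha_2+\alpha_3)t - \cdots))$ yields a J-fraction with level weights $b_0=1$, $b_h=2h+q$ for $h\ge1$, and descent weights $\lambda_h=h(h-1+q)$ for $h\ge1$. Setting $q=1$ gives $b_h=2h+1$ and $\lambda_h=h^2$, which is precisely the classical continued fraction for $F(x)=\sum_{n}n!\,x^n$. This is the decisive structural clue: our refinement keeps the classical permutation history and merely re-weights it, writing $b_h=2h\cdot1+1\cdot q$ and $\lambda_h=h(h-1)\cdot1+h\cdot q$.

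\emph{Matching via Laguerre histories.} By Flajolet's combinatorial theory of continued fractions, the above J-fraction is the length generating function for weighted Motzkin paths with up-steps of weight $1$, level-steps at height $h$ of weight $b_h$, and down-steps from height $h$ of weight $\lambda_h$. I would then invoke a standard bijection from $S_n$ to such histories (Fran\c{c}on--Viennot / Foata--Zeilberger, built by inserting $1,2,\dots,n$ with the height recording the number of currently available insertion sites), under which the classical factor $n!$ arises from the $2h+1$ and $h^2$ independent choices at each step. The objective is to show that the choices carrying the extra $q$, namely one per level-step in $b_h=2h+q$ and $h$ per down-step in $\lambda_h=h(h-1)+hq$, are exactly the insertions that create a new occurrence of $p$, so that the $q$-exponent of a history equals $p(\pi)$.

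\emph{Expected obstacle.} The hard part is precisely this localization. The statistic $p(\pi)$ is defined through the global $H^aL^b$ shape of an entire prefix, whereas the J-fraction requires it to decompose into independent single-step increments compatible with the weight splitting above. I would need to track how inserting the next value changes the set of positions satisfying the $H^aL^b$ condition, and prove that the number of insertions raising $p$ by $1$ is exactly one at each level-step of height $h$ and exactly $h$ at each down-step from height $h$, all other insertions leaving $p$ fixed; controlling these increments uniformly across the history is the crux. As a fallback, mirroring the iterative functional-equation technique used for patterns Nr.$16$ and Nr.$33$ in Theorems~\ref{thm-pat-16} and~\ref{thm-pat-33}, I would instead seek a catalytic functional equation for a suitable refinement of $F(x,q)$ whose repeated iteration reproduces the nested continued fraction directly.
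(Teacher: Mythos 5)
You should first note that the paper does not prove this statement at all: Conjecture~\ref{conj-nr-3} is left open, so there is no proof in the paper to compare against, and any complete argument you supply would go beyond the paper rather than parallel it. Your preliminary steps are correct and verifiable. The reformulation of $p(\pi)$ as the number of positions $j$ whose prefix, read relative to $\pi(j)$, has shape $H^aL^b$ with $b\ge 1$ follows directly from the shading (boxes $\boks00,\boks01$ force the entries left of $i_1$ above $v_2$, box $\boks12$ forces the entries between $i_1$ and $i_2$ below $v_2$, so $i_1$ is forced to be the first ``low'' entry of the prefix and each $j$ tops at most one occurrence); I checked $T_3(x)=1+4x+x^2$ against both your statistic and the contracted continued fraction, and they agree. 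The even contraction is also right: with $\alpha_{2j-1}=j$ and $\alpha_{2j}=j-1+q$ one gets $b_0=1$, $b_h=2h+q$, $\lambda_h=h(h-1+q)$, which specializes at $q=1$ to the classical J-fraction for $\sum n!\,x^n$.

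However, as you yourself flag, the proposal is a plan rather than a proof, and the gap is exactly where you place it. The statistic you must track is global in an awkward way: whether position $j$ counts depends on the entire prefix being of the form $H^aL^b$ relative to $\pi(j)$, and a single later insertion (in a Fran\c{c}on--Viennot or Foata--Zeilberger history) can destroy this property for several earlier positions at once, or fail to decompose into the independent per-step choices that the weights $b_h=2h+q$ and $\lambda_h=h(h-1)+hq$ demand. Nothing in the proposal establishes that the increment of $p$ is $+1$ for exactly one of the $2h+1$ level-options and exactly $h$ of the $h^2$ down-options and $0$ otherwise; without that lemma the identification of the $q$-weight with $p(\pi)$ is unproven, and the conjecture remains open. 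The fallback you mention (a catalytic functional equation iterated to produce the nested fraction, in the spirit of Theorems~\ref{thm-pat-16} and~\ref{thm-pat-33}) is reasonable but equally unexecuted. In short: correct setup, correct target, but the decisive step is missing.
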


Note that the operator DELTA was already linked to patterns in permutations, and also to so-called {\em Riordan arrays}, in \cite[$A200545$]{OEIS}.

As a direction for further research, we suggest studying joint distribution of patterns considered in this paper and other permutation statistics. As an illustration of this idea, we derive the following generating function
$$F(x,q,t)=\sum_{n\geq 0}x^n\sum_{\pi\in S_n}q^{\pattern{scale=0.5}{1}{1/1}{0/1,1/0}(\pi)}t^{\mbox{\tiny des}(\pi)}$$
generalizing Theorem~\ref{thm-length-1}. Let $$F(x,t)=\sum_{n\geq 0}x^n\sum_{\pi\in S_n}t^{\mbox{\tiny des}(\pi)}=\sum_{n\geq 0}A_n(t)x^n,$$
where $A_n(t)$'s are the well known {\em Eulerian polynomials}. Also, let $$F(x,t)=\sum_{n\geq 0}x^n\sum_{\pi\in S_n(\hspace{-1.5mm}\pattern{scale=0.5}{1}{1/1}{0/1,1/0})}t^{\mbox{\tiny des}(\pi)},$$ where recall that  $S_n(\hspace{-1.5mm}\pattern{scale=0.5}{1}{1/1}{0/1,1/0})$ denotes the set of \pattern{scale=0.8}{1}{1/1}{0/1,1/0}-avoiding permutations. Now, following the same steps as in the proof of Theorem~\ref{thm-length-1}, we obtain: 
\begin{align*}
G(x,t)+xG(x,t)F(x,t)=F(x,t)\ \ \ \ \ & \Rightarrow \ \ \ \ \ G(x,t)=\frac{F(x,t)}{1+xF(x,t)};\\
G(x,t)+xqG(x,t)F(x,q,t)=F(x,q,t)\ & \Rightarrow \ \ F(x,q,t) = \frac{F(x,t)}{1+x(1-q)F(x,t)}.
\end{align*}

As a final remark, we note that it would be interesting to classify completely mesh patterns of length 2 with respect to their distribution. As noted in the introduction, the number of equivalence classes here is  larger than that of Wilf-equivalence classes (given by equivalence with respect to avoidance) discussed in \cite{Hilmarsson2015Wilf}.

\section*{\bf Acknowledgments}
The authors are grateful to the anonymous referee for providing several suggestions that helped to improve the presentation of the results. The second author was partially supported by the National Science Foundation of China (Nos. 11626172, 11701424).

\end{document}